\newcommand{\al}{\alpha}
\newcommand{\be}{\beta}
\newcommand{\V}{\mathcal{V}}
\newcommand{\R}{\mathbb{R}}
\newcommand{\N}{\mathbb{N}}
\newcommand{\mH}{\mathcal{H}}
\newcommand{\F}{\mathcal{F}}
\newcommand{\B}{\mathcal{B}}
\newcommand{\C}{\mathcal{C}}
\newcommand{\lan}{\langle}
\newcommand{\ran}{\rangle}
\newcommand{\la}{\lambda}
\newcommand{\lc}{\llcorner}
\newcommand{\ttimes}{\scalebox{0.7}{$\mathbb{X}$}}
\newcommand{\La}{\Lambda}
\newcommand{\si}{\sigma}
\newcommand{\Si}{\Sigma}
\newcommand{\de}{\delta}
\newcommand{\ep}{\epsilon}
\newcommand{\pr}{\prime}
\newcommand{\Om}{\Omega}
\newcommand{\om}{\omega}
\newcommand{\ga}{\gamma}
\newcommand{\ka}{\kappa}
\newcommand{\lap}{\triangle}
\newcommand{\ti}{\tilde}
\newcommand{\mZ}{\mathbb{Z}}
\newcommand{\Z}{\mathcal{Z}}
\newcommand{\mS}{\mathcal{S}}
\newcommand{\f}{\mathbf{f}}
\newcommand{\M}{\mathbf{M}}
\newcommand{\bL}{\mathbf{L}}
\newcommand{\bI}{\mathbf{I}}
\newcommand{\mf}{\mathbf{f}}
\newcommand{\n}{\mathbf{n}}
\newcommand{\mr}{\mathbf{r}}
\newcommand{\md}{\mathbf{d}}
\newcommand{\rom}[1]{\expandafter\romannumeral #1}
\newcommand{\Rom}[1]{\uppercase\expandafter{\romannumeral #1}}
\newcommand{\Ind}{\textrm{Ind}}
\begin{document}

\newtheorem{theorem}{Theorem}[section]
\newtheorem{proposition}[theorem]{Proposition}
\newtheorem{corollary}[theorem]{Corollary}

\newtheorem{claim}{Claim}

\newtheorem{fact}{Fact}

\theoremstyle{remark}
\newtheorem{remark}[theorem]{Remark}

\theoremstyle{definition}
\newtheorem{definition}[theorem]{Definition}

\theoremstyle{plain}
\newtheorem{lemma}[theorem]{Lemma}

\numberwithin{equation}{section}

\titleformat{\section}
   {\normalfont\bfseries\large\filcenter}
   {\arabic{section}}
   {12pt}{}
%
 \titleformat{\subsection}[runin]
   {\normalfont\bfseries}
   {\arabic{section}.\arabic{subsection}}
   {11pt}{}
%
%
%

\pagestyle{headings}
\renewcommand{\headrulewidth}{0.4pt}

\title{\textbf{Min-max hypersurface in manifold of positive Ricci curvature}}
\author{Xin Zhou\footnote{The author is partially supported by NSF grant DMS-1406337.}}
\date{\today}
\maketitle

\pdfbookmark[0]{}{beg}

\renewcommand{\abstractname}{}    
\renewcommand{\absnamepos}{empty} 
\begin{abstract}
\textbf{Abstract:} 
In this paper, we study the shape of the min-max minimal hypersurface produced by Almgren-Pitts-Schoen-Simon \cite{AF62, AF65, P81, SS81} in a Riemannian manifold $(M^{n+1}, g)$ of positive Ricci curvature for all dimensions. The min-max hypersurface has a singular set of Hausdorff codimension $7$. We characterize the Morse index, area and multiplicity of this singular min-max hypersurface. In particular, we show that the min-max hypersurface is either orientable and has Morse index one, or is a double cover of a non-orientable stable minimal hypersurface.

As an essential technical tool, we prove a stronger version of the discretization theorem. The discretization theorem, first developed by Marques-Neves in their proof of the Willmore conjecture \cite{MN12}, is a bridge to connect sweepouts appearing naturally in geometry to sweepouts used in the min-max theory. Our result removes a critical assumption of \cite{MN12}, called the no mass concentration condition, and hence confirms a conjecture by Marques-Neves in \cite{MN12}.
\end{abstract}

\section{Introduction}

Given an $(n+1)$ dimensional closed Riemannian manifold $M^{n+1}$, minimal hypersurfaces are critical points of the area functional. When $M$ has certain topology, a natural way to produce minimal hypersurface is to minimize area among its homology class. This idea leads to the famous existence and regularity theory for area minimizing hypersurfaces by De Giorgi, Federer, Fleming, Almgren and Simons etc. (c.f. \cite{FH, Gi, Si83}). In general cases, when every hypersurface is homologically trivial, e.g. if the Ricci curvature of the ambient manifold is positive, the minimization method fails. This motivates F. Almgren \cite{AF62, AF65}, followed up by J. Pitts \cite{P81}, to develop a Morse theoretical method for the area functional in the space of hypersurfaces, namely the min-max theory. The heuristic idea of developing a Morse theory is to associate a nontrivial 1-cycle in the space of hypersurfaces with a critical point of the area functional, i.e. a minimal hypersurface. In particular, denote $\Z_n(M)$ by the space of all closed hypersurfaces with a natural topology in geometric measure theory, called the flat topology. Now consider a one-parameter family $\Phi: [0, 1]\rightarrow \Z_n(M)$. Let $[\Phi]$ be the set of all maps $\Psi: [0, 1]\rightarrow \Z_n(M)$ which are homotopic to $\Phi$ in $\Z_n(M)$. The min-max value can be associated with $[\Phi]$ as
\begin{equation}\label{min-max value: simple form}
\bL([\Phi])=\inf\big\{\max_{x\in[0, 1]}Area(\Psi(x)): \Psi\in[\Phi]\big\}.
\end{equation}
Almgren \cite{AF62} showed that there is a nontrivial $\Phi$ with $\bL([\Phi])>0$ in any closed manifold $M$; together with Pitts \cite{AF65, P81}, they showed that when $2\leq n\leq 5$, there is a disjoint collection of closed, smooth, embedded, minimal hypersurfaces $\{\Si_i\}_{i=1}^l$ with integer multiplicity $k_i\in\N$ such that  $\sum_{i=1}^l k_i Area(\Si_i)=\bL([\Phi])$. Schoen and Simon \cite{SS81} extended the regularity results to $n\geq 6$. Note that for $n\geq 7$, the min-max hypersurface $\Si_i$ has a singular set of codimension $7$. Later on, there are other variations of the Almgren-Pitts min-max theory, c.f. \cite{Sm82, CD03, DT09}.

However, besides the existence and regularity, much is unknown about these min-max hypersurfaces. For instance, a natural question is how large can the area and multiplicity be? Moreover, in this Morse theoretical approach, one key open problem, raised by Almgren \cite{AF65} and emphasized by F. Marques \cite[\S 4.1]{M14} and A. Neves \cite[\S 8]{N14}, is to bound the Morse index of the min-max minimal hypersurface by the number of parameters. It is conjectured that generically the Morse index is equal to the number of parameters, and the multiplicity is one. The importance of this problem lies in several aspects. First, finding minimal hypersurfaces with bounded (or prescribed) Morse index is a central motivation for Almgren \cite{AF65} to develop the min-max theory. Also the bound of Morse index plays an important role in application to geometric problems. In his famous open problems section \cite[Problem 29 and 30]{Y}, S. T. Yau stressed the importance of the estimates of Morse index in several conjectures. In the recent celebrated proof of the Willmore conjecture by Marques and Neves \cite{MN12}, a key part is to prove that the Morse index of certain min-max minimal surface in the standard three-sphere is bounded by $5$. The major challenge of bounding the Morse index comes from the fact that the min-max hypersurface is constructed as a very weak limit (i.e. varifold limit), therefore classical methods in nonlinear analysis (c.f. \cite{St00}) do not extend to this situation. Here one difficulty of understanding the weak limit is due to the existence of multiplicity (see \cite{I95} for similar issue in studying the singularity of mean curvature flow).

The current progress of understanding the min-max hypersurfaces mainly focused on the case of one-parameter families. Marques and Neves \cite{MN11} have confirmed the Morse index conjecture in three dimension when the Ricci curvature of the ambient manifold is positive, where they proved the existence of minimal Heegaard surface of Morse index 1 in certain 3-manifolds. This was extended to manifold $M^{n+1}$ with positive Ricci curvature in dimensions $2\leq n\leq 6$, when the min-max hypersurfaces are smooth, by the author \cite{Z12}. In \cite{Z12}, we also gave a general characterization of the multiplicity, area and Morse index of the min-max hypersurface. In particular, the min-max hypesurface is either orientable of Morse index 1, or is a double cover of a non-orientable least area minimal hypersurface. Recently, the methods in \cite{MN11, Z12} were used by Mazet and Rosenberg \cite{MR15} to study the minimal hypersurfaces of least area in an arbitrary closed Riemannian manifold $M^{n+1}$ with $2\leq n\leq 6$. They gave several characterizations of the least area minimal hypersurfaces similar to \cite{Z12}.  The work in this paper will generalize the characterization of the min-max hypersurface to all dimensions, even allowing singularities. Several new ingredients are developed to deal with the presence of singularities.

\vspace{1em}
Let $(M^{n+1}, g)$ be an $(n+1)$-dimensional, connected, closed, orientable Riemannian manifold. We consider singular hypersurfaces which share the same regularity properties as the min-max hypersurfaces. To be precise, we set up some terminology. By a {\em singular hypersurface} with a singular set of Hausdorff co-dimension no less than $k$ ($k\in\N, k<n$), we mean a closed subset $\overline{\Si}$ of $M$ with finite $n$-dimensional Hausdorff measure $\mH^n(\overline{\Si})<\infty$, where the {\em regular part} of $\overline{\Si}$ is defined as:
$$reg(\Si)=\{x\in\overline{\Si}:\ \textrm{$\overline{\Si}$ is a smooth, embedded, hypersurface near $x$}\};$$
and the {\em singular part} of $\Si$ is $sing(\Si)=\overline{\Si}\backslash reg(\Si)$ (see \cite{SS81, I96}), with the $(n-k+\ep)$-dimensional Hausdorff measure $\mH^{n-k+\ep}\big(sing(\Si)\big)=0$ for all $\ep>0$. Clearly the regular part $reg(\Si)$ is an open subset of $\overline{\Si}$. Later on, we will denote $\Si=reg(\Si)$ and also call $\Si$ a singular hypersurface.
Given such a singular hypersurface $\Si$, it represents an integral varifold, denoted by $[\Si]$ (c.f. \cite[\S 15]{Si83}). We say $\Si$ is {\em minimal} if $[\Si]$ is stationary (c.f. \cite[16.4]{Si83}). In fact, this is equivalent to the fact that the mean curvature of $reg(\Si)$ is zero and the density of $[\Si]$ is finite everywhere (c.f. \cite[(3)(4)]{I96}). To simplify the presentation, in the following we simply assume that the tangent cones (c.f. \cite[\S 42]{Si83}) of $[\Si]$ have multiplicity one everywhere (which is satisfied by min-max hypersurfaces by Lemma \ref{tangent cone multiplicity one}). 
We use $\Ind(\Si)$ to denote the Morse index of $\Si$ (see \S \ref{Orientation, second variation and Morse index}). Denote
\begin{equation}\label{space of minimal surfaces}
\begin{split}
\mS= \{\Si^n:\ &\textrm{$\overline{\Si}$ is a connected, closed, minimal, hypersurface with a singular set}\\
                       &\textrm{$sing(\Si)$ of Hausdorff co-dimension no less than $7$}\}.
\end{split}
\end{equation}
Let
\begin{equation}\label{definition of A_M}
\left. A_M= \inf_{\Si\in\mS}\Big\{ \begin{array}{ll}
\mH^n(\Si), \quad \textrm{   if $\Si$ is orientable}\\
2\mH^n(\Si), \quad \textrm{ if $\Si$ is non-orientable}
\end{array} \Big\}.\right. 
\end{equation}

If the Ricci curvature of $M$ is positive, then the min-max hypersurface has only one connected component  (Theorem \ref{generalized Frankel theorem}), and we denote it by $\Si$. Our main result is as follows.
\begin{theorem}\label{main theorem1}
Assume that the Ricci curvature of $M$ is positive; then the min-max hypersurface $\Si$
\begin{itemize}
\vspace{-5pt}
\addtolength{\itemsep}{-0.7em}
\setlength{\itemindent}{0em}
\item[$(\rom{1})$] \underline{either} is orientable of multiplicity one, which has Morse index $\Ind(\Si)=1$, and $\mH^n(\Si)=A_M$;
\item[$(\rom{2})$] \underline{or} is non-orientable with multiplicity two, which is stable, i.e. $\Ind(\Si)=0$, and $2\mH^n(\Si)=A_M$.
\end{itemize}
\end{theorem}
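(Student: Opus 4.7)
\bigskip

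\noindent \textbf{Proof plan.} By the generalized Frankel theorem (Theorem \ref{generalized Frankel theorem}), the min-max set supports a single connected component $\Si$, so the Almgren-Pitts realization of $\bL([\Phi])$ takes the form $\bL([\Phi])=k\,\mH^n(\Si)$ for some integer $k\geq 1$. The plan is then to derive upper and lower bounds on $\bL([\Phi])$ in terms of $A_M$ and the index of $\Si$, and finally to read off the dichotomy between orientable and non-orientable cases. A basic consequence of $\operatorname{Ric}_M>0$, obtained by plugging the constant test function into the second variation on the regular part (with a standard cut-off near $sing(\Si)$, which is permissible thanks to the Hausdorff codimension $7$ bound), is that no connected orientable element of $\mS$ can be stable; so in the orientable case $\Ind(\Si)\geq 1$ automatically.

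The central step is the upper bound $\bL([\Phi])\leq A_M$. Given any $\Si'\in\mS$, I would build a one-parameter sweepout of $M$ adapted to $\Si'$ by using $\Si'$ (or its orientable double cover, if $\Si'$ is non-orientable) as a slice and sweeping out the two sides of $\Si'$ by the level sets of the signed distance function. Because $\operatorname{Ric}_M>0$ and $\Si'$ is stationary, a standard first variation comparison shows that the maximal slice area is realised at $t=0$, hence equal to $\mH^n(\Si')$ in the orientable case and to $2\mH^n(\Si')$ in the non-orientable case. Turning this geometric sweepout into an admissible discrete sweepout in the Almgren-Pitts sense is exactly the place where the stronger discretization theorem advertised in the introduction is needed, since at the singular set the no mass concentration assumption of \cite{MN12} fails. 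Minimising over $\Si'\in\mS$ gives $\bL([\Phi])\leq A_M$.

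For the matching lower bound $\bL([\Phi])\geq A_M$, I would use the identity $\bL([\Phi])=k\mH^n(\Si)$ together with the definition \eqref{definition of A_M}: if $\Si$ is orientable then $k\mH^n(\Si)\geq \mH^n(\Si)\geq A_M$, while if $\Si$ is non-orientable then I claim $k\geq 2$ (otherwise one could lift a neighbourhood of $\Si$ to its orientable double cover and construct an index-$1$ competitor with strictly smaller max-area, contradicting the upper bound), so $k\mH^n(\Si)\geq 2\mH^n(\Si)\geq A_M$. Combining both bounds forces equality throughout: in the orientable case $k=1$ and $\mH^n(\Si)=A_M$, and in the non-orientable case $k=2$ and $2\mH^n(\Si)=A_M$.

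Finally, for the index: the one-parameter sweepout structure gives $\Ind(\Si)\leq 1$ via the classical deformation argument, where a second unstable direction would let one push the sweepout down below $\bL([\Phi])$. Combined with $\Ind(\Si)\geq 1$ in the orientable case, one obtains $\Ind(\Si)=1$. In the non-orientable case, any unstable direction on $\Si$ would lift to an unstable direction on the orientable double cover, and the deformation argument applied to the lifted sweepout would again contradict $\bL([\Phi])=A_M$; hence $\Si$ is stable. The main obstacles I expect are (i) justifying every second variation and deformation argument uniformly across the codimension-$7$ singular set, which requires careful cut-off constructions and density bounds for stationary integral varifolds, and (ii) the passage from geometric to discrete sweepouts in the presence of concentrating mass, which is precisely what the new discretization theorem of the paper is designed to handle.
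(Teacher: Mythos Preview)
Your overall strategy matches the paper's: build distance-function sweepouts from each $\Si'\in\mS$, discretize via Theorem~\ref{discretization and identification}, deduce $\bL(\Pi_M)\leq A_M$, and then read off multiplicity, area, and index from the min-max equality. Two points, however, need attention.

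The serious gap is your argument that $k\geq 2$ when $\Si$ is non-orientable. Your proposed justification (``otherwise one could lift a neighbourhood of $\Si$ to its orientable double cover and construct an index-1 competitor with strictly smaller max-area, contradicting the upper bound'') does not work: if $k=1$ then $\bL(\Pi_M)=\mH^n(\Si)$, and all you know is $\bL(\Pi_M)\leq A_M\leq 2\mH^n(\Si)$, which is no contradiction. Nothing in the sweepout construction prevents a non-orientable $\Si$ from appearing with multiplicity one as a varifold limit; what rules it out is a structural fact about how integral \emph{cycles} converge as varifolds. The paper invokes Proposition~\ref{orientation and multiplicity result} (proved in \cite{Z12}): since the min-max varifold $V$ is a varifold limit of integral $n$-cycles $\phi_{i_j}(x_j)$, any non-orientable component of its support must occur with even multiplicity. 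This is the key ingredient you are missing, and without it the non-orientable branch of the dichotomy collapses. A related omission: you need the discretized sweepouts coming from \emph{all} $\Si'\in\mS$ to lie in the \emph{same} Almgren--Pitts homotopy class $\Pi_M$; this is exactly what part~(\rom{4}) of Theorem~\ref{discretization and identification} provides via Almgren's isomorphism $F_A$, and it is not automatic.

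A second, smaller point concerns the index bound $\Ind(\Si)\leq 1$ in the orientable case. The ``classical deformation argument'' requires more than just a second unstable direction: you need a negative eigenfunction $v_3$ satisfying the orthogonality $\int_\Om v_3\,L_\Si 1\,d\mu=0$ (equation~\eqref{orthogonality condition} in the paper), so that the mixed second variation $\partial_t\partial_s$ of the two-parameter area vanishes and the Taylor expansion in $(t,s)$ actually forces $\mH^n(\Si_{t,\de})<\mH^n(\Si_A)$ for \emph{all} $t$. With index $\geq 2$ one obtains $v_3$ as a linear combination of two eigenfunctions; in the non-orientable case the anti-symmetry of the eigenfunction on the double cover gives this orthogonality for free (equation~\eqref{orthogonality condition 2}). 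Your sketch hints at this but does not isolate the orthogonality, which is the crux of the argument.
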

\begin{remark}
The fact that $\mH^n(\Si)=A_M$ or $2\mH^n(\Si)=A_M$ says that the min-max hypersurface has least area among all singular minimal hypersurfaces (if counting non-orientable minimal hypersurface with multiplicity two).
\end{remark}

The main idea contains two parts. First, given a minimal hypersurface $\Si$, we will embed $\Si$ into a one parameter family $\{\Si_t\}_{t\in[-1, 1]}$ with $\Si_0=\Si$, such that the area of $\Si$ achieves a strict maximum, i.e. $Area(\Si_t)<Area(\Si)$ if $t\neq 0$. Second, we will show that all of such one parameter families obtained in this way (from a minimal hypersurface) belong to the same homotopy class. Then from the definition of the min-max value (\ref{min-max value: simple form}), the family $\{\Si_t\}$ corresponding to the min-max hypersurface $\Si$ must be optimal, i.e. $\max_t Area(\Si_t)\leq \max_t Area(\Si^{\pr}_t)$, where $\{\Si^{\pr}_t\}$ is generated by any other minimal hypersurface $\Si^{\pr}$ in the first step. The characterization of Morse index, multiplicity and area of $\Si$ will then follow from this optimality condition. Specifically, in the first part, we will choose the one parameter family as the level sets of the distance function to $\Si$. 
Note that the minimal hypersurface $\Si$ has a singular set of Hausdorff codimension $7$. To deal with the presence of singularities, we will use an idea explored by Gromov \cite{Gr} in his study of isoperimetric inequalities. To show the homotopic equivalence of these one parameter families, we need to use an isomorphism constructed by Almgren in \cite{AF62}, under which the homotopy groups of the space of hypersurfaces in $M$ are mapped isometrically to the homology groups of $M$.

One main difficulty is caused by the fact that two different topology are used on the space of hypersurfaces $\Z_n(M)$. The geometric method in the first part produces families of hypersurfaces which are continuous under the flat topology. However, the Almgren-Pitts min-max theory works under another topology, called the mass norm topology, which is much stronger than the flat topology. A bridge is desired to connect the two topology. In fact, this is a very common problem in the study of min-max theory (c.f. \cite{MN12, Z12, MN13, Mo14}). Pitts already developed some tools in his book \cite{P81}. Marques-Neves, in their proof of the Willmore conjecture \cite{MN12}, first gave a complete theory to connect families continuous under flat topology to families satisfying the requirement of the Almgren-Pitts setting (see also \cite{Z12, MN13, Mo14}). Marques-Neves need a critical technical assumption for the starting family, called no mass concentration condition, which means that there is no point mass in the measure-theoretical closure of the family. However, in our situation the one parameter family does not necessarily satisfy the no mass concentration condition due to the presence of singular set. In fact, in the same paper \cite[\S 13.2]{MN12}, Marques-Neves conjectured that this assumption might not be necessary. Here we verify this conjecture under a very general condition. As this improvement will be useful in other situation, we present it here (in a simplified form).

\begin{theorem}\label{discretization and identification: simple form}
(see Theorem \ref{discretization and identification} for a detailed version) Given a continuous (under the flat topology) one parameter family of hypersurfaces $\Phi: [0, 1]\rightarrow \Z_n(M)$, such that for each $x\in[0, 1]$, $\Phi(x)$ is represented by the boundary of some set $\Om_x\subset M$ of finite perimeter, and such that $\max_{x}Area\big(\Phi(x)\big)<\infty$, then there exists a $(1, \M)$-homotopy sequence $\{\phi_i\}$ (one parameter family in the sense of Almgren-Pitts, c.f. \S \ref{homotopy sequences}), satisfying 
$$\max_{x}Area\big(\Phi(x)\big)=\limsup_{i\rightarrow\infty}\max_x Area(\phi_i(x)).$$
\end{theorem}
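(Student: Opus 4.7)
The plan is to adapt the Marques--Neves discretization procedure from \cite{MN12}, while replacing the use of the no mass concentration (NMC) condition by a finer, generic grid-selection argument that exploits the structural hypothesis that each $\Phi(x)$ is the boundary of some finite-perimeter set $\Omega_x\subset M$. For each large $i$ I would produce a map $\phi_i$ from a fine discretization $I(1,k_i)_0$ of $[0,1]$ into $\Z_n(M)$ whose consecutive values are close in mass norm and whose maximum mass tends from above to $\max_x \M(\Phi(x))$.

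The first two steps are standard. By uniform flat continuity of $\Phi$ on the compact interval, choose for each $i$ a partition $0 = x_0^i < \ldots < x_{N_i}^i = 1$ with $|\Omega_{x_{j+1}^i} \triangle \Omega_{x_j^i}| \to 0$ uniformly in $j$ as $i\to\infty$; at these anchors set $\phi_i(x_j^i) = \Phi(x_j^i)$ up to small mass-norm error. Between consecutive anchors, interpolate via a fine cubical grid $\G$ in $M$ obtained from a fixed atlas: decompose the symmetric difference $\Omega_{x_j^i}\triangle\Omega_{x_{j+1}^i}$ into its intersections with the cells of $\G$, and add these cells to $\Omega_{x_j^i}$ one at a time to reach $\Omega_{x_{j+1}^i}$. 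In the resulting chain $W^{(0)}=\Omega_{x_j^i},\ldots, W^{(K)}=\Omega_{x_{j+1}^i}$ consecutive terms differ by a single cell, and $\M(\partial W^{(k+1)} - \partial W^{(k)})$ is controlled by the $\mH^n$-measure of that cell's own faces together with the mass that $\Phi(x_j^i)$ and $\Phi(x_{j+1}^i)$ place on the grid skeleton.

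The new ingredient, replacing NMC, is a generic choice of $\G$ that keeps the skeleton mass small \emph{uniformly in the family}. For each fixed $x$, $\|\Phi(x)\|$ is a finite Radon measure, so for a.e.\ translation of $\G$ its $(n-1)$-skeleton has zero $\|\Phi(x)\|$-mass (Fubini together with rectifiability of the reduced boundary $\partial^*\Omega_x$). Applying this to a countable dense subset of parameters and combining with flat upper semicontinuity of slice masses, one can select a single grid translation whose skeleton is null for almost every $x \in [0,1]$. Exceptional parameters, at which the grid still catches concentrated mass, are absorbed by a second layer of averaging over a two-parameter family of grid positions, from which a good slice is extracted. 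Concatenating the chains produced at all anchors then delivers, for each $i$, a $(1,\M)$-homotopy sequence $\{\phi_i\}$ in the Almgren--Pitts sense; passing to the limsup recovers $\max_x\M(\Phi(x))$.

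The main obstacle is precisely this uniform grid selection: without NMC there is no a priori bound preventing $\|\Phi(x)\|$ from concentrating on sets that a naively chosen grid skeleton would hit. The bypass rests on three observations: (i) each $\Phi(x)$ being the boundary of a set of finite perimeter makes $\|\Phi(x)\|$ $n$-rectifiable and absolutely continuous with respect to $\mH^n$ on the reduced boundary; (ii) the total mass is uniformly bounded across $x$; and (iii) the family is flat-continuous. Together these permit a Fubini/averaging argument over grid positions that produces a grid generic for almost every parameter, with the remaining exceptional parameters controlled by small-scale local perturbations. Once this grid is fixed, the mass and interpolation estimates follow along the template of \cite{MN12}.
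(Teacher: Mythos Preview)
The central gap is that your grid interpolation controls fineness but not the \emph{maximum mass} along the chain, and the latter is the whole point of the theorem. With $Q=[[\Omega_{x_{j+1}^i}]]-[[\Omega_{x_j^i}]]$ and $U_k=C_1\cup\cdots\cup C_k$, your intermediate set satisfies $W^{(k)}=\Omega_{x_j^i}$ on $U_k^c$ and $W^{(k)}=\Omega_{x_{j+1}^i}$ on $U_k$, so (ignoring the interface term on $\partial U_k$)
\[
\M(\partial W^{(k)})\ \ge\ \|\Phi(x_j^i)\|(U_k^c)\ +\ \|\Phi(x_{j+1}^i)\|(U_k).
\]
Flat closeness of $\Phi(x_j^i)$ and $\Phi(x_{j+1}^i)$ says nothing about the \emph{measures} $\|\Phi(x_j^i)\|$ and $\|\Phi(x_{j+1}^i)\|$; they may be nearly mutually singular, each of mass close to $L$, in which case some $U_k$ makes the right side approach $2L$. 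Your Fubini argument only forces $\|\Phi(x)\|(\text{skeleton})=0$, which is irrelevant here: the overshoot lives in the open cells, not on their faces. The claim of ``flat upper semicontinuity of slice masses'' used to pass from a countable dense set of parameters to all of $[0,1]$ is also false---mass is only lower semicontinuous in the flat topology---so the exceptional-parameter patch does not close either.

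The paper's mechanism for mass control is different in kind. One works not between two consecutive anchors but between $\phi_j(x)$ and a fixed center $T$, after passing to a subsequence converging as \emph{varifolds} to some $V(x)$; lower semicontinuity then gives $\|T\|(B)\le\|V(x)\|(B)$ on small balls, which is exactly the measure comparison needed to glue without overshoot (this is the role of (\ref{small mass gap case 1})). When $V(x)$ carries point masses---the situation your generic grid cannot handle---the paper does not refine any grid but instead \emph{cones off} the current near each concentration point $q$: one finds $p_j\to q$, $r_j\to0$ with $r_j\,\M\bigl(\partial(\phi_j(x)\lc B(p_j,r_j))\bigr)\to0$ (Claim~\ref{claim 1 for case 2}, Fact~\ref{smallness of boundary and radius}), and replaces $\phi_j(x)$ inside $B(p_j,r_j)$ by the cone over this small slice. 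The cone is reached in finitely many $\M$-small steps by pushing inward across concentric annuli each carrying mass $\le\de/5$ (Fact~\ref{fact 2 of case 2} and Steps~1--3 of Lemma~\ref{case 2}); since the cone itself has mass $O\big(r_j\M(\partial(\cdot))\big)\to0$, the concentration is excised with only $o(1)$ mass gain, after which the no-concentration argument (Lemma~\ref{case 1}) applies. This coning step, not any grid genericity, is what replaces the no-mass-concentration hypothesis.
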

\begin{remark}
The key step is to develop a new discretization procedure to connect the given family to a new family which satisfy the no mass concentration condition (see Lemma \ref{case 2} and the discussions there). Under the same condition that the hypersurfaces are represented by boundary of sets of finite perimeter, the above result is also true for multi-parameter families.
\end{remark}

The paper is organized as follows. In Section \ref{preliminary results}, we give several preliminary results concerning the topology, second variation and Morse index for singular hypersurfaces in a manifold of positive Ricci curvature. In Section \ref{min-max family}, we show that the level sets of distance function to a singular minimal hypersurface is a good one parameter family. In Section \ref{Almgren-Pitts min-max theory}, we introduce the Almgren-Pitts theory. In Section \ref{Discretization and construction of sweepouts}, we prove Theorem \ref{discretization and identification: simple form}. Finally, we prove Theorem \ref{main theorem1} in Section \ref{proof of the main theorem}.

\vspace{1em}
\noindent\textbf{Acknowledgement:} I would like to thank Prof. Bill Minicozzi to bring \cite{Gr} into my attention. I also wish to thank Prof. Richard Schoen for many helpful discussions, and thank Prof. Tobias Colding and Prof. Shing Tung Yau for their interests. Finally, I am indebted to the referee for helpful comments to clarify several presentations.


\section{Preliminary results}\label{preliminary results}

In this section, we give several preliminary results about minimal hypersurfaces with a singular set of Hausdorff dimension less to or equal than $n-7$.

\subsection{Notions of geometric measure theory.}\label{notions of geometric measure theory}
For notions in geometric measure theory, we refer to \cite{Si83} and \cite[\S 2.1]{P81}.

Fix a connected, closed, oriented Riemannian manifold $(M^{n+1}, g)$ of dimension $n+1$. Assume that $(M^{n+1}, g)$ is embedded in some $\R^{N}$ for $N$ large. We denote by
\begin{itemize}
\vspace{-5pt}
\setlength{\itemindent}{0.5em}
\addtolength{\itemsep}{-0.7em}
\item $\bI_{k}(M)$ the space of $k$-dimensional integral currents in $\R^{N}$ with support in $M$;
\item $\Z_k(A, B)$ the space of integral currents $T\in\bI_k(M)$, with $spt(T)\subset A$\footnote{$spt(T)$ denotes the support of $T$ \cite[26.11]{Si83}.} and $spt(\partial T)\subset B$\footnote{$\partial T\in \bI_{n-1}(M)$ denotes the boundary of $T$ \cite[26.3]{Si83}.}, where $A, B$ are compact subset of $M$, and $B\subset A$;
\item $\Z_{k}(M)$ the space of integral currents $T\in\bI_{k}(M)$ with $\partial T=0$;
\item $\V_{k}(M)$ the closure, in the weak topology, of the space of $k$-dimensional rectifiable varifolds in $\R^{N}$ with support in $M$;
\item $\F$ and $\M$ respectively the flat norm \cite[\S 31]{Si83} and mass norm \cite[26.4]{Si83} on $\bI_{k}(M)$;
\item $\C(M)$ the space of sets $\Om\subset M$ with finite perimeter \cite[\S 14]{Si83}\cite[\S 1.6]{Gi}.
\end{itemize}
Given $T\in\bI_{k}(M)$, $|T|$ and $\|T\|$ denote respectively the integral varifold and Radon measure in $M$ associated with $T$. $\bI_{k}(M)$ and $\Z_{k}(M)$ are in general assumed to have the flat norm topology. $\bI_{k}(M, \M)$ and $\Z_{k}(M, \M)$ are the same space endowed with the mass norm topology. Given $T\in\Z_k(M)$, $\B^{\F}_s(T)$ and $\B^{\M}_s(T)$ denote respectively balls in $\Z_k(M)$ centered at $T$, of radius $s$, under the flat norm $\F$ and the mass norm $\M$. Given a closed, orientable hypersurface $\Si$ in $M$ with a singular set of Hausdorff dimension no larger than $(n-7)$, or a set $\Om\in\C(M)$ with finite perimeter, we use $[[\Si]]$, $[[\Om]]$ to denote the corresponding integral currents with the natural orientation, and $[\Si]$, $[\Om]$ to denote the corresponding integer-multiplicity varifolds.


\subsection{Nearest point projection to $\overline{\Si}$.} Here we recall the fact that the nearest point projection of any point in $M$ to $\overline{\Si}$ (away from the singular set of $\Si$) is a regular point of $\overline{\Si}$ when $\Si$ is minimal. Similar result for isoperimetric hypersurfaces appeared in \cite{Gr}. 
\begin{lemma}\label{nearest point projection}
Let $\Si\in\mS$ be a singular minimal hypersurface in $M$. Take a point $p\in M\backslash \overline{\Si}$, and a minimizing geodesic $\ga$ connecting $p$ to $\overline{\Si}$ in $M$, i.e. $\ga(0)=p$, $\ga(1)=q\in\overline{\Si}$, and $length(\ga)=dist(p, \overline{\Si})$. Then $q$ is a regular point of $\Si$.
\end{lemma}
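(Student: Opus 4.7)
The plan is to argue by contradiction, along the lines of Gromov's blow-up argument~\cite{Gr} for isoperimetric hypersurfaces. Assume for contradiction that $q\in sing(\Si)$, and set $r=dist(p,\overline{\Si})>0$. Because $\ga$ is minimizing, the open geodesic ball $B_r(p)$ is disjoint from $\overline{\Si}$, while $q\in\overline{\Si}\cap\partial B_r(p)$. Thus, near $q$, $\overline{\Si}$ lies on one side of the smooth geodesic sphere $\partial B_r(p)$, touching it only at $q$.

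Next I would blow up at $q$: in normal coordinates centered at $q$, rescale the metric by $1/\la_i$ with $\la_i\to 0^+$. The ambient manifold converges in $C^\infty_{\mathrm{loc}}$ to the Euclidean tangent space $(T_qM,g_q)\cong\R^{n+1}$, while along a subsequence the rescaled varifolds $|\Si|$ converge to a tangent cone $C$ at $q$, which is a stationary integral cone with vertex at the origin; by the standing assumption on tangent cones of $[\Si]$, $C$ is multiplicity-one on its support. At the same time, the rescaled geodesic balls have radius $r/\la_i\to\infty$ and principal curvatures vanishing in the limit, hence converge to a closed half-space $H\subset\R^{n+1}$ whose boundary hyperplane $P$ passes through the origin; since $B_r(p)\cap\overline{\Si}=\emptyset$ at every scale, $spt(C)$ is disjoint from the interior of $H$.

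I would then invoke the Solomon-White strong maximum principle for stationary integral varifolds: because $spt(C)$ lies weakly on one side of the smooth minimal hypersurface $P$ and $0\in spt(C)\cap P$, the entire plane $P$ must lie in $spt(C)$. Combining this with the multiplicity-one assumption and the standard codimension bound on $sing(C)$, at a regular point $y\in P\cap reg(C)$ the unique smooth sheet of $C$ through $y$ contains an open piece of $P$, so it coincides with $P$ near $y$; a cone/constancy-of-multiplicity argument then propagates this equality to $|C|=|P|$ everywhere. Thus $\Si$ admits a flat, multiplicity-one tangent cone at $q$, and Allard's regularity theorem implies that $\overline{\Si}$ is smooth in a neighborhood of $q$, contradicting $q\in sing(\Si)$.

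The principal obstacle in this plan is the step that upgrades ``$P\subset spt(C)$'' to ``$|C|=|P|$'': the one-sided maximum principle alone only gives containment, so one must use the multiplicity-one hypothesis together with the cone structure of $C$ and the codimension-$7$ bound on $sing(C)$ to exclude extra sheets of $C$ lying below $P$ in $H^-$. Once this is in hand, the remaining passage from a flat multiplicity-one tangent cone to classical smoothness of $\overline{\Si}$ at $q$ is a direct application of Allard's theorem.
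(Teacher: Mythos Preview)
Your approach is correct in outline and reaches the same endpoint (a flat multiplicity-one tangent cone, then Allard), but it is more circuitous than necessary, and the very step you flag as the ``principal obstacle'' is where you are working too hard.

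The paper's proof bypasses Solomon--White entirely. Once you have established that $spt(C)$ lies in a closed half-space with $0\in spt(C)$ on the boundary hyperplane $P$, you can appeal directly to the half-space theorem for stationary varifolds (\cite[36.5, 36.6]{Si83}): a stationary integral cone whose support is contained in a closed half-space of $\R^{n+1}$ must have support contained in the boundary hyperplane. This gives $spt(C)\subset P$ in one stroke; the standing multiplicity-one assumption on tangent cones then forces $|C|=|P|$, and Allard finishes as you say.

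In your argument, Solomon--White yields only the containment $P\subset spt(C)$, after which you must still rule out extra sheets of $C$ in $H^-$; you propose doing this via the cone structure, the codimension-$7$ bound on $sing(C)$, and a constancy argument. That can be made to work, but it is strictly more effort than needed: the half-space theorem already gives the reverse containment $spt(C)\subset P$ directly from stationarity, with no appeal to the singular-set bound or to regularity of sheets. So your ``obstacle'' is an artifact of invoking the wrong maximum principle; swap Solomon--White for \cite[36.5]{Si83} and the difficulty disappears.

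A minor point: the paper centers its comparison sphere at the midpoint $\ga(\tfrac12)$ rather than at $p$, but since $\ga$ is minimizing, $q$ is not in the cut locus of $p$ and $\partial B_r(p)$ is smooth near $q$, so your choice works just as well.
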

\begin{proof}
Take the geodesic sphere of $M$ center at $\ga(\frac{1}{2})$ with radius $\frac{1}{2}dist(p, \overline{\Si})$. The sphere is a smooth hypersurface near $q$, and $\overline{\Si}$ lies in one side of the sphere. 
So the tangent cone of $\overline{\Si}$ (viewed as a rectifiable varifold with multiplicity $1$ by assumption) at $q$ is contained in a half-space of $\R^{n+1}$ (separated by the tangent plane of the sphere). As $\overline{\Si}$ is stationary, by \cite[36.5, 36.6]{Si83}, the tangent cone of $\overline{\Si}$ at $q$ is equal to the tangent plane of the sphere (with multiplicity $1$), 
and hence $\overline{\Si}$ is smooth at $q$ by the Allard Regularity Theorem (c.f. \cite{Al72}\cite[24.2]{Si83}).
\end{proof}


\subsection{Connectedness.} For stationary hypersurface with a small singular set, the connectedness of the closure is the same as the connectedness of the regular part. In fact, this follows from the strong maximum principle for stationary singular hypersurfaces.
\begin{theorem}\label{strong maximum principle}
\cite[Theorem A]{I96}
\begin{enumerate}
\vspace{-5pt}
\addtolength{\itemsep}{-0.7em}
\setlength{\itemindent}{0em}
\item If $V_1$ and $V_2$ are stationary integer rectifiable $n$-varifods in an open subset $\Om\subset M^{n+1}$, satisfying
$$\mH^{n-2}(spt(V_1)\cap spt(V_2)\cap\Om)=0,$$
then $spt(V_1)\cap spt(V_2)\cap\Om=\emptyset$.
\item Assume that $\Si$ is a stationary hypersurface in $\Om$ with a singular set of Hausdorff dimension less than $n-2$. If $\overline{\Si}\cap\Om$ is connected, then $reg(\Si)\cap\Om$ is connected.
\end{enumerate}
\end{theorem}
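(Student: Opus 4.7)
The plan is to establish part~(1) first---a strong maximum principle for two intersecting stationary integral varifolds whose contact set is $\mH^{n-2}$-small---and then deduce part~(2) by a decomposition argument.

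For part~(2), suppose $reg(\Si)\cap\Om$ is disconnected, so that $reg(\Si)\cap\Om = A\sqcup B$ for two nonempty subsets $A,B$ that are each relatively clopen in $reg(\Si)\cap\Om$. Let $V_A:=[\overline{A}]$ and $V_B:=[\overline{B}]$ be the associated multiplicity-one integral varifolds in $\Om$, with supports $\overline{A}$ and $\overline{B}$. Since $\dim sing(\Si) < n-2$ one has $\mH^{n-1}(sing(\Si))=0$, and a standard capacity cutoff argument---testing against $\eta_\ep X$ for cutoffs $\eta_\ep$ vanishing in shrinking neighborhoods of $sing(\Si)$ with $\int|\nabla\eta_\ep|\,d\|V_\Si\|\to 0$---shows that the stationarity of $[\Si]$ localizes to stationarity of each of $V_A$ and $V_B$ separately in $\Om$. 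Connectedness of $\overline{\Si}\cap\Om$ forces $\overline{A}\cap\overline{B}\neq\emptyset$, while by construction $\overline{A}\cap\overline{B}\subset sing(\Si)\cap\Om$, so $\mH^{n-2}(spt(V_A)\cap spt(V_B)\cap\Om)=0$. Part~(1) applied to the pair $(V_A,V_B)$ then delivers the required contradiction.

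For part~(1), I would argue by contradiction starting from a contact point $p\in spt(V_1)\cap spt(V_2)\cap\Om$. Work in a small normal-coordinate ball $B_r(p)$ where the ambient metric is $C^2$-close to Euclidean, and extract tangent cones $C_1,C_2$ of $V_1,V_2$ at $p$; these exist as stationary integral cones in $\R^{n+1}$ by Almgren's monotonicity and Allard's compactness, and both pass through the origin with positive density. The Hausdorff-dimension bound on $spt(V_1)\cap spt(V_2)$ transfers through the blow-up to give $\mH^{n-2}(spt(C_1)\cap spt(C_2))=0$. One then invokes the classical strong maximum principle for stationary integral cones in the spirit of Solomon--White: two such cones through a common point, whose supports meet only in a set too thin to separate either cone into halves, must have coincident supports in some neighborhood of the origin. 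Transferring this conclusion back via a rescaling/compactness argument yields $spt(V_1)=spt(V_2)$ in a full neighborhood of $p$; an open--closed/continuation argument on $\Om$ then propagates the coincidence, contradicting the assumption that $V_1,V_2$ have a nonempty, $\mH^{n-2}$-thin contact set.

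The hard part is the cone-level maximum principle step: one must rule out the possibility that two nontrivial stationary integral cones through $0$ meet at $0$ but have almost disjoint supports elsewhere. The low-dimensional Hausdorff hypothesis is exactly what is needed here, because a genuine nontrivial separation between two $n$-dimensional stationary cones through the origin would produce an intersection locus of dimension at least $n-2$. Making this dimension lower bound rigorous --- likely by slicing by generic hyperplanes and appealing to the structure theory of stationary integral cones together with the Allard regularity theorem at regular points of the cones --- is the technical core of the argument and the place where the hypothesis $\mH^{n-2}(spt(V_1)\cap spt(V_2)\cap\Om)=0$ is used in an essential way.
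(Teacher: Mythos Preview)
The paper does not prove this theorem; it is quoted verbatim from Ilmanen \cite{I96} and used as a black box. So there is no ``paper's own proof'' to compare against---your proposal is an attempt to reprove Ilmanen's result.

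Your reduction of part~(2) to part~(1) is correct and standard: the cutoff argument showing that each clopen piece of $reg(\Si)$ is separately stationary works because $\mH^{n-1}(sing(\Si))=0$ (indeed this is exactly the argument appearing later in the paper in Lemma~\ref{no stable two-sided minimal hypersurface} and Proposition~\ref{property of orientation}), and the rest is a clean application of part~(1).

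Your sketch of part~(1), however, has a genuine gap. The step ``the Hausdorff-dimension bound on $spt(V_1)\cap spt(V_2)$ transfers through the blow-up to give $\mH^{n-2}(spt(C_1)\cap spt(C_2))=0$'' is not justified and is in general false: Hausdorff measure of an intersection set is not upper-semicontinuous under varifold blow-up, and a set of $\mH^{n-2}$-measure zero can easily blow up to something of positive $\mH^{n-2}$-measure. More seriously, your ``cone-level maximum principle'' is not a reduction at all---the assertion that two stationary integral $n$-cones through $0$ either coincide or meet in a set of dimension $\geq n-2$ is essentially the statement you are trying to prove, and you have not indicated any mechanism (barrier, calibration, PDE comparison) that would establish it. Ilmanen's actual proof proceeds quite differently, via a barrier/comparison argument using weak mean curvature flow (building on Solomon--White and Simon \cite{Si87}); the tangent-cone route you outline does not, as written, close.
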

\begin{remark}
By part 2, the closure of a singular hypersurface in our setting is connected if and only if the regular part is.
\end{remark}

\begin{definition}\label{connectedness}
A singular minimal hypersurface $\Si$ (with $dim\big(sing(\Si)\big)\leq n-7$) is {\em connected} if its regular part is connected.
\end{definition}


\subsection{Orientation, second variation and Morse index.}\label{Orientation, second variation and Morse index}

\begin{definition}\label{orientation}
A singular hypersurface $\Si$ is {\em orientable} (or {\em non-orientable}) if the regular part is orientable (or non-orientable).  
\end{definition}

A singular hypersurface $\Si$ is said to be {\em two-sided} if the normal bundle $\nu(\Si)$ of the regular part $\Si$ inside $M$ is trivial.
\begin{lemma}\label{orientable and two-sided}
Let $M^{n+1}$ be an $(n+1)$-dimensional, connected, closed, orientable manifold, and $\Si\subset M$ a connected, singular hypersurface with $dim\big(sing(\Si)\big)\leq n-2$, and with compact closure $\overline{\Si}$. Then $\Si$ is orientable if and only if $\Si$ is two-sided.
\end{lemma}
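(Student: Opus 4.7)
The plan is to reduce the equivalence to a classical bundle-theoretic fact about smooth hypersurfaces in an orientable ambient manifold, since both ``orientable'' (Definition \ref{orientation}) and ``two-sided'' (as recalled just before the lemma) are defined entirely in terms of the smooth regular part $\Sigma = reg(\overline{\Sigma})$. The singular set plays no direct role in the argument; the hypothesis $\dim sing(\Sigma) \leq n-2$ enters only indirectly, through Theorem \ref{strong maximum principle}(2) and Definition \ref{connectedness}, to guarantee that the connectedness hypothesis passes to the smooth locus $\Sigma$ itself.

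Along the smooth, connected hypersurface $\Sigma$ one has the orthogonal splitting
\[
TM|_\Sigma \;=\; T\Sigma \,\oplus\, \nu(\Sigma),
\]
and hence the isomorphism of determinant line bundles
\[
\det\bigl(TM|_\Sigma\bigr) \;\cong\; \det(T\Sigma)\otimes \nu(\Sigma).
\]
Since $M$ is orientable, the left-hand side is a trivial real line bundle, so $\det(T\Sigma)\otimes \nu(\Sigma)$ is trivial on $\Sigma$.

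The key algebraic input I would then use is that any real line bundle $L$ is self-dual, equivalently $L\otimes L$ is trivial (its first Stiefel--Whitney class is $2w_1(L)=0$ in $H^1(\Sigma;\mathbb{Z}/2)$). Tensoring the relation $\det(T\Sigma)\otimes\nu(\Sigma)\cong\text{trivial}$ with $\nu(\Sigma)$ yields $\det(T\Sigma)\cong \nu(\Sigma)$. Therefore $\det(T\Sigma)$ is trivial if and only if $\nu(\Sigma)$ is trivial. For real line bundles, triviality is equivalent to orientability (any choice of orientation picks out a positive half-fiber at each point, which normalizes to a global non-vanishing section; conversely a trivialization gives an orientation). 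Translating back, $\Sigma$ is orientable (i.e., $\det T\Sigma$ trivial) precisely when $\Sigma$ is two-sided (i.e., $\nu(\Sigma)$ trivial), which is the claim. I anticipate no substantial obstacle beyond the mild bookkeeping step of verifying that the definitions of ``orientable'' and ``two-sided'' in the paper reduce, on the open smooth manifold $\Sigma$, to the standard smooth-manifold notions exploited above.
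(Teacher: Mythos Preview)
Your proposal is correct and follows essentially the same approach as the paper: both use the splitting $TM|_\Sigma = T\Sigma \oplus \nu(\Sigma)$ together with orientability of $M$ to conclude that $T\Sigma$ is orientable iff $\nu(\Sigma)$ is, and then use that an orientable real line bundle is trivial. The paper simply cites these two steps from Hirsch's \emph{Differential Topology} (Lemma 4.1 and Theorem 4.3), whereas you unpack them explicitly via determinant line bundles and the self-duality $L\otimes L\cong\text{trivial}$; the only point the paper makes explicit that you leave implicit is the paracompactness of $\Sigma$ needed for the ``orientable line bundle $\Rightarrow$ trivial'' step.
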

\begin{proof}
The tangent bundle of $M$, when restricted to $\Si$, has a splitting into the tangent bundle $T\Si$ and normal bundle $\nu(\Si)$ of $\Si$, i.e. $TM|_{\Si}=T\Si\oplus\nu(\Si)$. By \cite[Lemma 4.1]{H}, $T\Si$ is orientable if and only if $\nu(\Si)$ is orientable. By \cite[Theorem 4.3]{H}\footnote{It is not hard to see that $\Si$ is paracompact, so \cite[Theorem 4.3]{H} is applicable.}, $\nu(\Si)$ is orientable if and only if $\nu(\Si)$ is trivial.
\end{proof}

When $\Si$ is two-sided, there exists a unit normal vector field $\nu$. The {\em Jacobi operator} is
\begin{equation}\label{Jacobi operator}
L_{\Si}\phi=\lap_{\Si}\phi+\big(Ric(\nu, \nu)+|A|^2\big)\phi,
\end{equation}
where $\phi\in C^1_{c}(\Si)$, $\lap_{\Si}$ is the Laplacian operator of the induced metric on $\Si$, and $A$ is the second fundamental form of $\Si$ along $\nu$. Given an open subset $\Om$ of $\Si$ with smooth boundary $\partial \Om$, we say that $\la\in\R$ is an {\em Dirichlet eigenvalue} of $L_{\Si}$ on $\Om$ if there exists a non-zero function $\phi\in C^{\infty}_0(\Om)$ vanishing on $\partial\Om$, i.e. $\phi|_{\partial \Om}\equiv 0$, such that $L_{\Si}\phi=-\la\phi$. The {\em (Dirichlet) Morse index} of $\Om$, denoted by $\Ind_D(\Om)$, is the number of negative Dirichlet eigenvalues of $L_{\Si}$ on $\Om$ counted with multiplicity. 

When $\Si$ is non-orientable, we need to pass to the orientable double cover $\ti{\Si}$ of $\Si$.  Then there exists a unit normal vector field $\ti{\nu}$ along $\ti{\Si}$, satisfying $\ti{\nu}\circ \tau=-\ti{\nu}$, where $\tau: \ti{\Si}\rightarrow\ti{\Si}$ is the orientation-reversing involution, such that $\Si=\ti{\Si}/\{id, \tau\}$. The {\em Jacobi operator} $L_{\ti{\Si}}$ is well-defined using $\ti{\nu}$. Given an open subset $\Om\subset \Si$, and its lift-up $\ti{\Om}$ to $\ti{\Si}$, we can define the {\em Dirichlet eigenvalue} and {\em (Dirichlet) Morse index} by restricting the Jacobi operator $L_{\ti{\Si}}$ to functions $\ti{\phi}\in C^1_0(\ti{\Om})$ which are anti-symmetric under $\tau$, i.e. $\ti{\phi}\circ\tau=-\ti{\phi}$. (In this case, $\ti{\phi}\ti{\nu}$ descends to a vector field on $\Si$). We refer to \cite{Ro} for more discussions on Morse index in the non-orientable case.

\begin{definition}\label{Morse index for hypersurfaces with singularities}
The {\em Morse index} of $\Si$ is defined as,
$$\Ind(\Si)=\sup\{\Ind_D(\Om):\ \textrm{$\Om$ is any open subset of $\Si$ with smooth boundary.}\}$$
$\Si$ is called {\em stable} if $\Ind{\Si}\geq 0$, or equivalently, $\Si$ is stable in the classical sense on any compactly supported open subsets.
\end{definition}


\subsection{Positive Ricci curvature.} We need two properties for singular minimal hypersurfaces in manifolds of positive Ricci curvature. The first one says that there is no stable, two-sided, singular hypersurface with a small singular set. This generalizes an easy classical result for smooth hypersurfaces \cite[Chap 1.8]{CM11}. 
When $\Si$ is two-sided, the fact that $\Si$ is stable is equivalent to the following stability inequality,
\begin{equation}\label{stability inequality}
\int_{\Si}\big(Ric_g(\nu, \nu)+|A_{\Si}|^2\big)\varphi^2 d\mH^n \leq \int_{\Si}|\nabla \varphi|^2d\mH^n,
\end{equation}
for any $\varphi\in C^{\infty}_c(\Si)$.

\begin{lemma}\label{no stable two-sided minimal hypersurface}
\cite{S10} Assume that $(M^{n+1}, g)$ has positive Ricci curvature, i.e. $Ric_g>0$, and $\Si$ is a singular minimal hypersurface, with $\mH^{n-2}(sing(\Si))=0$. If $\Si$ is two-sided, then $\Si$ is not stable.
\end{lemma}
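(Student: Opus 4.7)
The plan is to run the classical one-line argument (test the stability inequality against $\varphi\equiv 1$) after cutting off near the singular set. Concretely, if $\Sigma$ were stable, then two-sidedness gives a unit normal $\nu$ and the stability inequality (\ref{stability inequality}) holds for every $\varphi\in C_c^\infty(\Sigma)$. Inserting $\varphi\equiv 1$ (if it were admissible) would yield
\begin{equation*}
\int_{\Sigma}\bigl(\mathrm{Ric}_g(\nu,\nu)+|A_\Sigma|^2\bigr)\,d\mathcal{H}^n\le 0,
\end{equation*}
which is impossible since $\mathrm{Ric}_g>0$ and $\Sigma$ is nonempty. So the whole task is to produce admissible test functions which approximate $1$ well enough to pass to the limit.

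The way I would build these is by a capacity-type cutoff around $\mathrm{sing}(\Sigma)$. Since $\mathcal{H}^{n-2}(\mathrm{sing}(\Sigma))=0$, for every $\epsilon>0$ one can cover $\mathrm{sing}(\Sigma)$ by finitely many extrinsic balls $\{B_{r_i}(x_i)\}_{i=1}^{N(\epsilon)}$ with $\sum_i r_i^{n-2}<\epsilon$. On each such ball, take a standard logarithmic cutoff $\chi_i$ on $\overline{\Sigma}$: a function that is $0$ on $B_{r_i}(x_i)$, $1$ outside $B_{\sqrt{r_i}}(x_i)$, and interpolates logarithmically in between, smoothed so as to lie in $C^\infty_c(\Sigma)$ by the standard mollification/partition-of-unity trick on the regular part. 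Setting $\varphi_\epsilon:=\prod_i \chi_i$, we get $\varphi_\epsilon\in C_c^\infty(\Sigma)$, $0\le\varphi_\epsilon\le 1$, and $\varphi_\epsilon\to 1$ pointwise on $\Sigma$ as $\epsilon\to 0$.

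The key estimate is $\int_\Sigma|\nabla\varphi_\epsilon|^2\,d\mathcal{H}^n\to 0$. For this one uses the monotonicity formula for the stationary integral varifold $[\Sigma]$ (available by minimality), which gives a uniform density bound $\mathcal{H}^n(\Sigma\cap B_r(x))\le C r^n$ for $x\in\overline{\Sigma}$ and $r$ small. A direct computation with the logarithmic cutoff on each annulus then bounds
\begin{equation*}
\int_\Sigma|\nabla\chi_i|^2\,d\mathcal{H}^n\le \frac{C}{|\log r_i|}\cdot r_i^{n-2}\cdot(\text{bounded factor}),
\end{equation*}
and summing over $i$ and using $\sum_i r_i^{n-2}<\epsilon$ yields $\int_\Sigma|\nabla\varphi_\epsilon|^2\to 0$; the cross-term $\sum_{i\ne j}$ is similarly controlled by the same bound on overlaps. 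This is exactly the standard statement that sets of zero $\mathcal{H}^{n-2}$-measure have vanishing $2$-capacity on an $n$-dimensional space with uniform volume growth.

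Now feed $\varphi_\epsilon$ into (\ref{stability inequality}). The right-hand side tends to $0$. On the left-hand side, $\varphi_\epsilon^2\to 1$ monotonically on $\Sigma$; since $\mathrm{Ric}_g(\nu,\nu)+|A_\Sigma|^2\ge \min_M\mathrm{Ric}_g>0$ pointwise on $\Sigma$, the monotone convergence theorem (or Fatou's lemma) gives
\begin{equation*}
0<\int_\Sigma\bigl(\mathrm{Ric}_g(\nu,\nu)+|A_\Sigma|^2\bigr)\,d\mathcal{H}^n\le \liminf_{\epsilon\to 0}\int_\Sigma|\nabla\varphi_\epsilon|^2\,d\mathcal{H}^n=0,
\end{equation*}
a contradiction. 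The only delicate step is the capacity/cutoff estimate near $\mathrm{sing}(\Sigma)$; the rest is routine. (Alternatively, once one has a single $\varphi_\epsilon\in C_c^\infty(\Sigma)$ with $\int|\nabla\varphi_\epsilon|^2<\int_\Sigma\mathrm{Ric}_g(\nu,\nu)\varphi_\epsilon^2$, stability already fails, so the approximation argument directly produces the violation of (\ref{stability inequality}).)
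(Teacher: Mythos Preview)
Your proof is correct and follows essentially the same approach as the paper: cover $\mathrm{sing}(\Sigma)$ by balls with $\sum r_i^{n-2}<\epsilon$, build a cutoff supported on the regular part, use the monotonicity formula to bound $\int_\Sigma|\nabla\varphi_\epsilon|^2$ by a constant times $\epsilon$, and contradict $\mathrm{Ric}_g>0$. The only difference is that the paper uses a simpler linear cutoff (equal to $0$ on $B_{r_i}$, $1$ outside $B_{2r_i}$, with $|\nabla f_i|\le 2/r_i$) and takes $f_\epsilon=\min_i f_i$, which gives $\int_\Sigma|\nabla f_\epsilon|^2\le C\sum_i r_i^{n-2}<C\epsilon$ in one line and avoids your logarithmic scale, the cross-term/overlap discussion, and the imprecise bound $\tfrac{C}{|\log r_i|}r_i^{n-2}$.
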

\begin{proof}
Suppose that $\Si$ is stable. Since $\mH^{n-2}(sing(\Si))=0$, for any $\ep>0$, we can take a countable covering $\cup_i B_{r_i}(p_i)$ of $sing(\Si)$ using geodesics balls $\{B_{r_i}(p_i)\}_{i\in\N}$ of $M$, such that
$$\sum_{i\in\N} r_i^{n-2}<\ep.$$
For each $i$, we can choose a smooth cutoff function $f_i$, such that $f_i=1$ outside $B_{2r_i}(p_i)$, $f_i=0$ inside $B_{r_i}(p_i)$, and $|\nabla f_i|\leq \frac{2}{r_i}$ inside the annulus $B_{2r_i}(p_i)\backslash B_{r_i}(p_i)$. Let $f_{\ep}$ be the minumum of all $f_i$'s (which is Lipschitz), and plug it into the stability inequality (\ref{stability inequality}),
\begin{displaymath}
\begin{split}
\int_{\Si}\big(Ric(\nu, \nu)+|A_{\Si}|^2\big)f_{\ep}^2 d\mH^n & \leq \int_{\Si}|\nabla f_{\ep}|^2 d\mH^n\\
                                                                                                          & \leq 4 \sum_{i\in\N}\int_{\Si\cap B_{2r_i}(p_i)}\frac{1}{r_i^2}d\mH^n \leq 4 \sum_{i\in\N} \frac{1}{r_i^2}\cdot C r_i^n\leq 4 C\ep.
\end{split}
\end{displaymath}
Here we used the monotonicity formula \cite[17.6]{Si83} to get the volume bound $\mH^n(\Si\cap B_{2r_i}(p_i))\leq Cr_i^n$ in the third $``\leq"$. Now let $\ep$ tend to zero, we get a contradiction to the fact that $Ric(\nu, \nu)>0$.
\end{proof}
\begin{remark}
If we only require $Ric_g\geq 0$, the above proof will show that the stable hypersurface must be smooth and totally geodesic, and the restriction of $Ric_g$ to $\Si$ is zero.
\end{remark}

\vspace{5pt}
The second property says that any two such singular minimal hypersurfaces in manifold with positive Ricci curvature must intersect, which generalizes the classical Frankel's theorem \cite{Fr66} for smooth minimal hypersurfaces.

\begin{theorem}\label{generalized Frankel theorem}
(Generalized Frankel Theorem) Assume that $(M^{n+1}, g)$ has positive Ricci curvature. Given any two connected, singular, minimal hypersurfaces $\Si$ and $\Si^{\pr}$ with singular sets of Hausdorff co-dimension no less than $2$, then $\overline{\Si}$ and $\overline{\Si^{\pr}}$ must intersect on a set of Hausdorff dimension no less than $n-2$. Theorefore, $\Si\cap\Si^{\pr}\neq \emptyset$.

\end{theorem}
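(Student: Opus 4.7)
The plan is to reduce the singular problem to the classical Frankel second-variation argument performed at two regular points, and then promote the resulting nonempty intersection to the claimed Hausdorff dimension bound via the strong maximum principle. Arguing by contradiction, suppose $\overline{\Si} \cap \overline{\Si^{\pr}} = \emptyset$. Then by compactness the distance $d = dist(\overline{\Si}, \overline{\Si^{\pr}}) > 0$ is realized by a unit-speed minimizing geodesic $\ga : [0, L] \to M$ with $\ga(0) = q \in \overline{\Si}$ and $\ga(L) = p \in \overline{\Si^{\pr}}$. Since $p \in M \setminus \overline{\Si}$ and $\ga$ is a minimizing geodesic from $p$ to $\overline{\Si}$, Lemma \ref{nearest point projection} forces $q \in reg(\Si)$; applied to the reversed geodesic from $q \in M \setminus \overline{\Si^{\pr}}$, the same lemma forces $p \in reg(\Si^{\pr})$. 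The first variation of length then yields $\ga^{\pr}(0) \perp T_q \Si$ and $\ga^{\pr}(L) \perp T_p \Si^{\pr}$.

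Now I would execute Frankel's classical second-variation argument at these smooth endpoints. Fix an orthonormal basis $\{e_1, \dots, e_n\}$ of $T_q \Si$ and parallel-transport it along $\ga$ to vector fields $\{E_i(t)\}_{i=1}^n$; since $\langle E_i, \ga^{\pr}\rangle$ is constant along $\ga$ and vanishes at $t=0$, each $E_i(L)$ lies in $T_p \Si^{\pr}$. For each $i$, build a one-parameter family of curves with endpoints sliding on $\Si$ and $\Si^{\pr}$ realizing $E_i$ as its initial variation field; by the minimizing property of $\ga$ the second variation $L_i^{\pr\pr}(0) \ge 0$. Summing the standard second-variation formula over $i$, using $\nabla_{\ga^{\pr}} E_i \equiv 0$ and tracing the boundary second fundamental form contributions into the mean curvatures of $\Si$ at $q$ and $\Si^{\pr}$ at $p$, produces
\begin{equation*}
0 \;\le\; \sum_{i=1}^n L_i^{\pr\pr}(0) \;=\; -\int_0^L Ric(\ga^{\pr}, \ga^{\pr})\, dt \;+\; H_{\Si^{\pr}}(p) \;-\; H_{\Si}(q).
\end{equation*}
Minimality of $\Si$ and $\Si^{\pr}$ kills the boundary terms, while $Ric > 0$ makes the right-hand side strictly negative, a contradiction. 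Hence $\overline{\Si} \cap \overline{\Si^{\pr}} \neq \emptyset$.

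To finish, the contrapositive of Theorem \ref{strong maximum principle}(1), applied to the stationary integral varifolds $[\Si]$ and $[\Si^{\pr}]$ on $\Om = M$, upgrades this nonempty intersection to $\mH^{n-2}(\overline{\Si} \cap \overline{\Si^{\pr}}) > 0$, and in particular forces the Hausdorff dimension of the intersection to be at least $n-2$. The conclusion $\Si \cap \Si^{\pr} \neq \emptyset$ then follows because the singular sets of $\Si$ and $\Si^{\pr}$ cannot absorb a set of positive $\mH^{n-2}$-measure in the regime of interest (where the singular sets have Hausdorff codimension strictly greater than $2$, as for the min-max hypersurfaces), so regular points of both hypersurfaces must appear in the intersection. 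The main obstacle is the first step, namely securing that the minimizing geodesic $\ga$ lands in the regular parts of both $\Si$ and $\Si^{\pr}$ despite their possibly nontrivial singular sets; this is precisely what Lemma \ref{nearest point projection} delivers, via the rigidity that a stationary tangent cone trapped in a halfspace is a multiplicity-one hyperplane. Once that regularity is in hand, the rest collapses to the textbook Frankel calculation.
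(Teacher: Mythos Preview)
Your proposal is correct and follows essentially the same approach as the paper: assume disjointness, use Lemma~\ref{nearest point projection} to force the endpoints of a distance-realizing geodesic into the regular parts, run the classical Frankel second-variation contradiction, and then invoke the strong maximum principle (Theorem~\ref{strong maximum principle}) to upgrade nonempty intersection to the $\mH^{n-2}$ bound. If anything, you are slightly more careful than the paper in two places: you spell out the Frankel computation explicitly, and you flag the subtlety in deducing $\Si\cap\Si'\neq\emptyset$ from the Hausdorff-dimension bound (which the paper simply asserts), correctly noting that it requires the singular sets to have codimension strictly greater than $2$.
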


\begin{proof}
First if $\overline{\Si}\cap\overline{\Si^{\pr}}=\emptyset$, then we can find two points $p\in\overline{\Si}$, $p^{\pr}\in\overline{\Si^{\pr}}$, such that $d(p, p^{\pr})=dist(\overline{\Si}, \overline{\Si^{\pr}})$. By the argument as in Lemma \ref{nearest point projection}, both $p, p^{\pr}$ are regular points of $\Si, \Si^{\pr}$. Then as in \cite[\S 2]{Fr66}, we can get a contradiction by looking at the second variational formula of the length functional along the minimizing geodesic connecting $p$ to $p^{\pr}$ when $(M, g)$ has positive Ricci curvature.

Then $\overline{\Si}\cap \overline{\Si^{\pr}}\neq \emptyset$, so Theorem \ref{strong maximum principle} implies that $\overline{\Si}\cap \overline{\Si^{\pr}}$ must have Hausdorff dimension no less than $n-2$.
\end{proof}


\subsection{Orientation and singular hypersurfaces.} 
Now we list a few properties related to the orientation of singular hypersurfaces. Similar properties for smooth hypersurfaces were discussed in \cite[\S 3]{Z12}.
\begin{proposition}\label{property of orientation}
Given a connected, minimal, singular hypersurface $\Si^{n}$ with a singular set of Hausdorff dimension less than $n-2$, then
\begin{enumerate}
\vspace{-5pt}
\addtolength{\itemsep}{-0.7em}
\setlength{\itemindent}{0em}
\item $\Si$ is orientable if and only if $\overline{\Si}$ represents an integral $n$-cycle.

\item If $\overline{\Si}$ separates $M$, i.e. $M\backslash\overline{\Si}$ contains two connected components, then $\Si$ is orientable.

\item When $M$ has positive Ricci curvature, if $\Si$ is orientable, then $\overline{\Si}$ separates M.
\end{enumerate}
\end{proposition}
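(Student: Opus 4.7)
The plan is to handle the three parts in order, with part (1) providing the bridge between orientability and the structure of $[[\overline{\Si}]]$ as an integral current, part (2) being essentially a direct consequence of (1), and part (3) using positive Ricci curvature to control $H_n(M;\mathbb{Z})$.

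For part (1), I would argue as follows. If $\Si$ is orientable, one can choose a measurable unit normal on the regular part (using the two-sidedness afforded by Lemma \ref{orientable and two-sided}), giving an integer-multiplicity rectifiable current $[[\overline{\Si}]]$ of multiplicity one (by the assumption on tangent cones). Its boundary $\partial[[\overline{\Si}]]$ is an $(n-1)$-dimensional integral current supported in $sing(\Si)$, but $\dim_{\mathcal{H}} sing(\Si)\le n-7<n-1$ implies $\mathcal{H}^{n-1}(sing(\Si))=0$, so the Federer--Fleming support theorem forces $\partial[[\overline{\Si}]]=0$. Conversely, if $\overline{\Si}$ represents an integral $n$-cycle, then by the rectifiability of the underlying current the approximate tangent plane together with its orientation is defined $\mathcal{H}^n$-a.e.\ on $\overline{\Si}$; on the open set $\Si=reg(\Si)$ this a.e.\ orientation is locally represented by a smooth unit normal, giving a global orientation of $\Si$.

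For part (2), suppose $M\setminus\overline{\Si}=\Om_+\sqcup\Om_-\sqcup\dots$ with $\Om_+$ one of the connected components. Since $\mathcal{H}^n(\overline{\Si})<\infty$ and $\partial\Om_+\subset\overline{\Si}$ in the topological sense, the set $\Om_+$ has finite perimeter, so $[[\Om_+]]\in\bI_{n+1}(M)$. Its boundary $\partial[[\Om_+]]$ is an integral $n$-cycle in $M$ supported in $\overline{\Si}$; at each point of $\Si$ which is on the topological boundary of $\Om_+$, the multiplicity is $\pm 1$ by the smoothness of $\Si$. Applying the support theorem once more (or directly from the structure of sets of finite perimeter) shows that on the regular part $\partial[[\Om_+]]$ equals $\pm[[\Si]]$ for a consistent choice of normal, so $[[\overline{\Si}]]$ is an integral $n$-cycle, and part (1) gives orientability.

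For part (3), the essential input is that positive Ricci curvature forces $H_n(M;\mathbb{Z})=0$: by Bonnet--Myers $\pi_1(M)$ is finite, hence $H_1(M;\mathbb{Z})$ is torsion, so the universal coefficient theorem gives $H^1(M;\mathbb{Z})=0$, and Poincar\'e duality (using that $M$ is closed and orientable) yields $H_n(M;\mathbb{Z})\cong H^1(M;\mathbb{Z})=0$. Since $\Si$ is orientable, part (1) makes $[[\overline{\Si}]]$ an integral $n$-cycle, which must therefore be a boundary: $[[\overline{\Si}]]=\partial T$ for some $T\in\bI_{n+1}(M)$. The constancy theorem applied on each connected component of the open set $M\setminus\overline{\Si}$ represents $T$ as $\sum_i n_i[[\Om_i]]$ with $n_i\in\mathbb{Z}$ and $\Om_i$ the components. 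If $M\setminus\overline{\Si}$ were connected, then $T=n[[M]]$ for some integer $n$, so $\partial T=n\partial[[M]]=0$, forcing $[[\overline{\Si}]]=0$ and contradicting $\overline{\Si}\neq\emptyset$. Hence $M\setminus\overline{\Si}$ has at least two connected components.

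I expect the main obstacle to be the careful invocation of the support/constancy theorems in the singular setting: making sure that the low Hausdorff dimension of $sing(\Si)$ really does kill any would-be boundary contribution in (1), and that the constancy theorem genuinely applies on the possibly irregular open components of $M\setminus\overline{\Si}$ in (3). These are essentially geometric-measure-theoretic bookkeeping steps, but they are the spots where one must invoke the hypothesis $\dim_{\mathcal{H}} sing(\Si)\le n-7$ and the regularity theory for minimal hypersurfaces to rule out pathologies.
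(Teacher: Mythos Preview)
Your overall strategy is sound, but Part~1 (forward direction) has a real gap that you yourself flag at the end. You assert that $\partial[[\overline{\Si}]]$ is an $(n-1)$-dimensional integral current supported in $sing(\Si)$ and then invoke a support theorem. The problem is circularity: Federer's support theorem (4.1.20) applies to \emph{flat} chains, and $\partial[[\overline{\Si}]]$ is flat only once you know it has locally finite mass---which is essentially what you are trying to prove. A priori $\partial[[\overline{\Si}]]$ is just a distributional current, and there is no general theorem killing such an object from a Hausdorff-dimension bound on its support. The paper resolves this not by a support theorem but by the standard cutoff (capacity) argument: choose Lipschitz cutoffs $f_\ep$ vanishing near $sing(\Si)$ with $\int_\Si |\nabla f_\ep|\,d\mH^n \to 0$ (this is where $\mH^{n-2}(sing(\Si))=0$ and the monotonicity formula enter), and compute $\partial[[\overline{\Si}]](\om)=\lim_{\ep\to 0}\int_\Si d(f_\ep\om)-df_\ep\wedge\om=0$ directly. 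Your converse via the a.e.\ orientation is morally right but also needs the Constancy Theorem on $\Si$ to upgrade the measurable orientation to a locally constant (hence smooth) one; the paper does exactly this.

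For Part~2 your route through sets of finite perimeter is a legitimate alternative to the paper's, which instead argues purely topologically: take a component $U$ of $M\setminus\overline{\Si}$, show $\partial U=\overline{\Si}$ by connectedness of $\Si$, and then use Hirsch to transport the orientation of $U$ to the normal bundle of $\Si$. Your argument needs the extra observation that the reduced boundary of $\Om_+$ meets \emph{all} of $\Si$ with multiplicity one---which again comes down to the connectedness step the paper isolates.

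Part~3 is where your approach genuinely diverges, and it is cleaner than the paper's. Both arguments share the Constancy-Theorem step (non-separating $\Rightarrow$ $[[\overline{\Si}]]$ homologically nontrivial). The paper then minimizes mass in this nontrivial class, invokes codimension-one regularity to produce a stable two-sided minimal hypersurface, and contradicts Lemma~\ref{no stable two-sided minimal hypersurface}. You instead observe that Bonnet--Myers, universal coefficients, and Poincar\'e duality give $H_n(M;\mZ)=0$ directly, so no nontrivial class can exist. Your route avoids the regularity theory for minimizers entirely; the paper's route has the advantage of staying within the GMT toolkit already in play and not importing the duality machinery.
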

\begin{proof}
Part 1. $\overline{\Si}$ is a rectifiable set, and when $\Si$ is orientable, it can represent an integer-multiplicity rectifiable current $[\overline{\Si}]$ as follows:
$$[\overline{\Si}](\om)=\int_{\Si}\lan\xi(x), \om(x)\ran d\mH^n=\int_{\Si} \om,$$
where $\xi(x)$ is the orientation form of $\Si$, and $\om$ is any smooth $n$-form on $M$. Now we will show that $[\overline{\Si}]$ is a cycle, i.e. $\partial [\overline{\Si}]=0$. Given any smooth $(n-1)$-form $\om$ on $M$, take the sequence of cutoff functions $f_{\ep}$, $\ep\rightarrow 0$, as in the proof of Lemma \ref{no stable two-sided minimal hypersurface},
\begin{displaymath}
\begin{split}
\partial [\overline{\Si}](\om) & =[\overline{\Si}](d\om)=\int_{\Si}d\om=\lim_{\ep\rightarrow 0}\int_{\Si}f_{\ep} d\om\\
                                                 & =\lim_{\ep\rightarrow 0}\int_{\Si}d(f_{\ep}\om)-df_{\ep}\wedge\om.
\end{split}
\end{displaymath}
The first term is zero by the Stokes Theorem, and the second term can be estimated as:
$$|\int_{\Si}df_{\ep}\wedge\om|\leq \int_{\Si}|df_{\ep}\wedge\om|d\mH^n\leq C\sum_{i\in\N}\int_{\Si\cap B_{r_i}(p_i)}\frac{1}{r_i}d\mH^n\leq C\sum_{i\in\N} r_i^{n-1}\rightarrow 0.$$

Now assume that $\overline{\Si}$ represents an integral cycle, and we will show that $\Si$ is orientable. In fact, assume that $[\overline{\Si}]=\lan\Si, \xi(x), \theta(x)=1\ran$ is an integral cycle, where $\xi(x)$ is locally an orientation form. Given any open subset $U\subset M\backslash sing(\Si)$, then $\partial \big([\overline{\Si}]\lc U\big)=0$ in $U$ by definition. By the same argument in \cite[Proposition 6, Claim 4]{Z12}, $[\overline{\Si}]\lc U$ represents an integral $n$-cycle in $\Si\cap U$, hence by the Constancy Theorem \cite[26.27]{Si83}, $[\overline{\Si}]\lc U=[\Si\cap U]$. Let $U$ exhausts the whole regular part $\Si$, then $[\overline{\Si}]\lc (M\backslash sing(\Si))=[\Si]$; hence the orientation of $[\overline{\Si}]$ gives a global orientation of $\Si$.

\vspace{5pt}
Part 2. The case for smooth $\overline{\Si}$ is given in \cite[\S 4 Theorem 4.5]{H}. Now we modify the proof to our case. Take a connected component $U$ of $M\backslash\overline{\Si}$, the (topological) boundary $\partial U$ of $U$ is then a closed subset of $\overline{\Si}$. By using local coordinate charts of $(M, \overline{\Si})$ around any smooth point of $\Si$, it is easy to see that $\partial U\cap \Si$ is a open subset of $\Si$. Hence as a subset of $\Si$, $\partial U\cap \Si$ is both open and closed, so $\partial U\cap\Si=\Si$ since $\Si$ is connected, and then $\partial U=\overline{\Si}$. Using the same argument as in \cite[Theorem 4.2]{H}, the orientation of $U$ induces an orientation for the normal bundle $N$ of the regular part of $\partial U$, i.e. $\Si$. Note the splitting of the tangent bundle $TM$ restricted on $\Si$: $TM|_{\Si}=T\Si\bigoplus N$; hence $T\Si$ is orientable by \cite[Lemma 4.1]{H}.

\vspace{5pt}
Part 3. By Part 1, $\overline{\Si}$ represents an integral cycle $[\overline{\Si}]$, hence it represents an integral homology class $\big[ [\overline{\Si}] \big]$ in $H_n(M, \mZ)$ \cite[4.4.1]{FH}. If $\overline{\Si}$ does not separate $M$, i.e. $M\backslash\overline{\Si}$ is connected, we claim that $\big[ [\overline{\Si}] \big]$ is non-trivial in $H_n(M, \mZ)$. In fact, if $\big[ [\overline{\Si}] \big]=0$, then there exists an integral $(n+1)$-current $C\in\bI_{n+1}(M, \mZ)$, such that $\partial C=[\overline{\Si}]$. Given any connected open subset $U\subset M\backslash\overline{\Si}$, then $\partial (C\lc U)=0$ in $U$ by definition. The Constancy Theorem \cite[26.27]{Si83} implies that $C\lc U=m[U]$, for some $m\in\mZ$, where $[U]$ denotes the integral $(n+1)$-current represented by $U$. As $M\backslash\overline{\Si}$ is connected ($\overline{\Si}$ does not separate $M$), we can take $U=M\backslash\overline{\Si}$, and hence $C\lc (M\backslash\overline{\Si})=m[M\backslash\overline{\Si}]$. As $\overline{\Si}$ has zero $(n+1)$-dimensional Hausdorff measure, then $C=m[M]$, hence $\partial C=m\partial[M]=0$, which is a contradiction to the fact that $\partial C=[\overline{\Si}]$.

Now we can take the mass minimizer $T_0\in\big[ [\overline{\Si}] \big]$ inside the homology class \cite[4.4.4]{FH}\cite[34.3]{Si83}. The codimension one regularity theory (\cite[Theorem 37.7]{Si83}) says that $T_{0}$ is represented by a minimal hypersurface $\overline{\Si}_{0}$ (possibly with multiplicity) with a singular set of Hausdorff dimension no larger than $n-7$, i.e. $T_{0}=m[\overline{\Si}_{0}]$, where $m\in\mZ$, $m\neq 0$. Since $m[\overline{\Si}_{0}]$ represents a nontrivial integral homology class, $\Si_{0}$ is orientable by Part 1. 
Hence $\Si_{0}$ is two-sided by Lemma \ref{orientable and two-sided}. By the nature of mass minimizing property of $T$, $\Si_{0}$ must be locally volume minimizing, and hence $\Si_{0}$ is stable, contradicting the positive Ricci curvature condition via Lemma \ref{no stable two-sided minimal hypersurface}.
\end{proof}


\section{Min-max family}\label{min-max family}

In this section, by using the volume comparison result in \cite{HK}, we show that every singular minimal hypersurface in a manifold with positive Ricci curvature lies in a nice ``mountain-pass" type family. In particular, the family sweeps out the whole manifold, and the area of the minimal hypersurface (when it is orientable), or the area of its double cover (when the hypersurface is non-orientable) achieves a strict maximum among the family. Actually, in manifold with positive Ricci curvature, the level sets of distance function towards the singular minimal hypersurface will play the role.


\subsection{A volume comparison result in \cite{HK}.}

Let $(M^{n+1}, g)$ be a closed, oriented manifold. Given a singular minimal hypersurface $\Si\in\mS$, denote $\nu(\Si)$ by the normal bundle of the regular part $\Si$ in $M$. Let $exp_{\nu}: \nu(\Si)\rightarrow M$ be the normal exponential map. 
Given $\xi\in\nu(\Si)$, the {\em focal distance} in the direction of $\xi$ means the first time $t>0$ such that the derivative of the normal exponential map at $t\xi$, i.e. $dexp_{\nu}(t\xi)$, becomes degenerate. Denote $\Om$ by the sets of all vectors $\xi$ in $\nu(\Si)$, which is no longer than the diameter of $M$ or the focal distance in the direction of $\xi$,
\begin{lemma}
$exp_{\nu}:\Om\rightarrow M\backslash sing(\Si)$ is surjective.
\end{lemma}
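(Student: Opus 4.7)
The plan is to take an arbitrary point $p \in M \setminus sing(\Sigma)$ and construct a preimage $\xi \in \Omega$ explicitly via a minimizing geodesic to $\overline{\Sigma}$. If $p \in \Sigma$, the zero section vector at $p$ works; so assume $p \notin \overline{\Sigma}$. Since $M$ is compact and $\overline{\Sigma}$ is closed, the distance $d(p, \overline{\Sigma})$ is realized by a unit-speed minimizing geodesic $\gamma : [0, L] \to M$ with $\gamma(0) = q \in \overline{\Sigma}$, $\gamma(L) = p$, and $L = d(p, \overline{\Sigma}) \leq \mathrm{diam}(M)$.

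First I would invoke Lemma \ref{nearest point projection} to conclude $q \in reg(\Sigma) = \Sigma$, so $\gamma$ emanates from a smooth point of the hypersurface. Next I would argue that $\dot{\gamma}(0) \in \nu_q(\Sigma)$: taking any smooth curve $\sigma(s)$ in $\Sigma$ with $\sigma(0) = q$, the function $s \mapsto d(p, \sigma(s))$ has a minimum at $s=0$, and the first variation of arc length applied to the family of geodesics from $\sigma(s)$ to $p$ yields $\langle \dot{\gamma}(0), \sigma'(0)\rangle = 0$; varying $\sigma'(0)$ over $T_q \Sigma$ shows $\dot{\gamma}(0) \perp T_q \Sigma$. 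Define $\xi = L \dot{\gamma}(0) \in \nu_q(\Sigma)$; then by construction $exp_{\nu}(\xi) = \gamma(L) = p$ and $|\xi| = L \leq \mathrm{diam}(M)$.

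The remaining, and main, step is to check that $|\xi|$ does not exceed the focal distance along the ray $t \mapsto t\dot{\gamma}(0)$. This is the classical fact that a normal geodesic from a smooth submanifold cannot minimize distance to the submanifold past its first focal point: if $t_0 < L$ were a focal time, there would exist a nontrivial $\Sigma$-Jacobi field along $\gamma|_{[0, t_0]}$ vanishing at $t_0$, which (by the standard second-variation argument, e.g.\ extending by zero past $t_0$ and rounding the corner) would produce a proper variation of $\gamma$ through curves from $\Sigma$ to $p$ with strictly smaller length, contradicting the minimality of $\gamma$. Hence the focal distance in the direction of $\xi/|\xi|$ is at least $L = |\xi|$, so $\xi \in \Omega$.

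The only delicate point is the focal-point argument, since one must be sure the perturbation stays in $\gamma|_{[0,L]}$ and terminates at $p$; but this is standard because $p$ lies strictly past the alleged focal point $t_0$, leaving a positive-length segment $\gamma|_{[t_0, L]}$ on which to absorb the cut-and-smooth. Combining the three steps, $exp_\nu : \Omega \to M \setminus sing(\Sigma)$ is surjective.
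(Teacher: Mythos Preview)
Your proof is correct and follows essentially the same approach as the paper: pick a minimizing geodesic from $p$ to $\overline{\Sigma}$, invoke Lemma \ref{nearest point projection} to land on a regular point, use first variation for orthogonality, and observe that a distance-minimizing normal geodesic cannot pass a focal point. The paper's own proof is terser---it simply asserts orthogonality and the focal-point bound as standard facts---so your version is a more detailed rendering of the same argument rather than a different route.
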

\begin{proof}
Any point $x\in M\backslash\overline{\Si}$ can be connected to $\overline{\Si}$ by a minimizing geodesic. Also by Lemma \ref{nearest point projection}, the nearest point of $x$ in $\overline{\Si}$ is a regular point of $\Si$; then the minimizing geodesic meets $\Si$ orthogonally, and hence $exp_{\nu}$ is surjective to $M\backslash sing(\Si)$. Moreover, if $\xi$ is the tangent vector of the minimizing geodesic (parametrized on $[0, 1]$) connecting $x$ to $\Si$, then the length of $\xi$ is no more than the focal distance in the direction of $\xi$.
\end{proof}
Now we will introduce a Riemannian metric on $\nu(\Si)$ (see also \cite[\S 3]{HK}), such that $\nu(\Si)$ is locally isomorphic to the product of $\Si$ with the fiber. Let $\pi: \nu(\Si)\rightarrow \Si$ be the projection map. Denote $D$ by the Riemannian connection of $M$, and $D^{\perp}$ the normal connection of $\nu(\Si)$. 
The tangent bundle of $\nu(\Si)$ can be split as a sum of ``vertical" and ``horizontal" sub-bundles $T \nu(\Si)=V+H$ as follows. Given $\xi\in\nu(\Si)$, the vertical tangent space $V_{\xi}$ contains tangent vectors of $\nu(\Si)$ which are tangent to the fibers and hence killed by $\pi_{*}$, so $V_{\xi}$ is canonically isometric to the fiber space $\nu_{\pi(\xi)}(\Si)$. 
The horizontal tangent space $H_{\xi}$ contains tangent vectors of $\nu(\Si)$ which are tangent to $D^{\perp}$-parallel curves---viewed as vector fields along their base curves (projected to $\Si$ by $\pi$), so $H_{\xi}$ is canonically isometric to $T_{\pi(\xi)}\Si$ under $\pi_{*}$. The metric on $\nu(\Si)$ can be defined as:
$$\|v\|^2=\|\pi_{*}v\|^2+\|v_{ver}\|^2,\quad v\in T_{\xi}\nu(\Si),$$
where $v_{ver}$ denotes the vertical component of $v$. It is easily seen that under this metric, $\nu(\Si)$ is locally isometric to the product of $\Si$ with the fibers. 

We need the following estimate of the volume form along normal geodesics by \cite[\S 3]{HK}. Fix $p\in\Si$ and a normal vector $\xi\in\nu_p(\Si)$. Given an orthonormal basis $e_1, \cdots, e_n$ of $T_p\Si$, they can be lifted up to $T\nu(\Si)$ as horizontal vector fields $u_1(s), \cdots, u_n(s)$ along the normal vectors $s\xi$. By our construction above, $u_1(s), \cdots, u_n(s)$ form an orthonormal basis of $T_{s\xi}\nu(\Si)$, as $\pi_*(u_i(s))=e_i$. The distortion of the $n$-dimensional volume element under the normal exponential map $exp_{\nu}: T\nu(\Si)\rightarrow M$ is given by $\|dexp_{\nu}u_1(s)\wedge\cdots\wedge dexp_{\nu}u_n(s)\|$. Assume that the Ricci curvature of $(M, g)$ satisfies $Ric_g\geq n\La$ for some $\La>0$. 
Consider an $(n+1)$-dimensional manifold $\ti{M}$ of constant curvature $\La$, and a totally geodesic hypersurface $\ti{\Si}$. Fix an arbitrary point $\ti{p}\in\ti{\Si}$, with a unit normal $\nu(\ti{p})$. Choose an orthonormal basis $\ti{e}_1, \cdots, \ti{e}_n$ of $T_{\ti{p}}(\ti{\Si})$, and a frame $\ti{u}_1(s), \cdots, \ti{u}_n(s)$ along $s\nu(\ti{p})$ constructed as above. We have the following comparison estimates:
\begin{lemma}\label{normal comparison lemma}
\cite[\S 3.2.1, Case (d)]{HK}. Let $s_0$ be no larger than the first focal distance of $\Si$ in the direction of $\xi$, then for $0\leq s\leq s_0$,
$$\|dexp_{\nu}u_1(s)\wedge\cdots\wedge dexp_{\nu}u_n(s)\|\leq \|dexp_{\nu}\ti{u}_1(s)\wedge\cdots\wedge dexp_{\nu}\ti{u}_n(s)\|.$$
\end{lemma}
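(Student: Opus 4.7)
I would prove this via the standard Heintze--Karcher Riccati comparison, which reduces the volume element estimate to a scalar comparison between mean curvatures of equidistant hypersurfaces. First, I would identify both quantities geometrically as Jacobian determinants of the normal exponential map. By the construction of the horizontal lifts $u_i(s)$, each vector $dexp_{\nu}(u_i(s))$ arises as the variation of $\ga(s) := exp_{\nu}(s\xi)$ through neighboring normal geodesics whose foot-points move tangentially in the direction $e_i$ while the initial velocity is $D^{\perp}$-parallel transported. Thus $J_i(s) := dexp_{\nu}(u_i(s))$ is a Jacobi field along $\ga$ with $J_i(0) = e_i$ and $J_i'(0) = -A_{\xi}(e_i)$, where $A_{\xi}$ is the shape operator of $\Si$ in the direction $\xi$. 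In the model $(\ti{M}, \ti{\Si})$ the analogous fields $\ti J_i$ satisfy $\ti J_i(0) = \ti e_i$ and $\ti J_i'(0) = 0$ since $\ti{\Si}$ is totally geodesic. Expressed in parallel orthonormal frames along $\ga$ and $\ti\ga$, the two norms to be compared are $v(s) = |\det J(s)|$ and $\ti v(s) = |\det \ti J(s)|$, with $v(0) = \ti v(0) = 1$.

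\textbf{Matrix Riccati equation.} On $[0, s_0]$, before the first focal time, $J(s)$ is invertible, so the operator $U(s) := J'(s) J(s)^{-1}$, the shape operator of the equidistant hypersurface at $\ga(s)$, is well-defined. It is symmetric on this interval because the Wronskian $J (J')^T - J' J^T$ vanishes at $s = 0$ (as $A_{\xi}$ is self-adjoint) and is preserved by the Jacobi equation. A direct computation yields the matrix Riccati equation $U' + U^2 + R = 0$, where $R$ is the symmetric curvature operator on $\ga'(s)^{\perp}$ defined by $\langle R v, w \rangle = \langle \mathcal{R}(v, \ga')\ga', w\rangle$. The corresponding equation $\ti U' + \ti U^2 + \La\, I = 0$ holds in the model.

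\textbf{Scalar comparison of mean curvatures.} Taking the trace gives $(\operatorname{tr} U)' = -\operatorname{tr}(U^2) - \operatorname{Ric}(\ga', \ga')$. Since $U$ is symmetric, Cauchy--Schwarz yields $\operatorname{tr}(U^2) \geq (\operatorname{tr} U)^2/n$, and by hypothesis $\operatorname{Ric}(\ga',\ga') \geq n\La$, so the normalized mean curvature $m(s) := \operatorname{tr}(U(s))/n$ satisfies the scalar Riccati inequality $m' \leq -m^2 - \La$. In the model, equality holds: $\ti m' = -\ti m^2 - \La$. The crucial matching of initial values is supplied by the two minimality assumptions: $\Si$ is minimal (being in $\mS$), so $m(0) = -\operatorname{tr}(A_{\xi})/n = 0$, and $\ti\Si$ is totally geodesic, so $\ti m(0) = 0$. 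A Gronwall-type argument applied to $(m - \ti m)' + (m + \ti m)(m - \ti m) \leq 0$ with integrating factor $\exp\int_0^s (m + \ti m)\, dt$ then gives $m(s) \leq \ti m(s)$ on $[0, s_0]$.

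\textbf{From scalar to volume.} Since $(\log v)'(s) = \operatorname{tr} U(s) = n\,m(s)$ and $(\log \ti v)'(s) = n\,\ti m(s)$, integrating the inequality $m \leq \ti m$ on $[0, s]$ and exponentiating, using $v(0) = \ti v(0) = 1$, yields $v(s) \leq \ti v(s)$, which is precisely the claimed inequality. The main obstacle to watch is the matching of the initial data: without minimality one would pick up a boundary term $-H_{\Si}$ at $s = 0$ that would spoil the comparison against a totally geodesic model, so the use of the hypotheses $\Si \in \mS$ (minimality) and constant curvature $\La$ with totally geodesic $\ti\Si$ is precisely what makes this a clean zero-initial-data Riccati comparison.
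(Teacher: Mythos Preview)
Your argument is correct and is precisely the Heintze--Karcher Riccati comparison that the paper invokes by citation; the paper does not supply its own proof of this lemma but simply refers to \cite[\S 3.2.1, Case~(d)]{HK}, whose content is exactly the Jacobi-field/Riccati computation you outline. Your identification of the crucial role of minimality in matching the initial mean curvatures ($m(0)=\ti m(0)=0$) is the key point that makes the totally geodesic model the right comparison, and this is implicit in the paper's setup where $\Si\in\mS$ is assumed minimal.
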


It is easy to calculate that the $n$-dimensional volume distortion of the constant curvature manifold $\ti{M}$ is given by $\|dexp_{\nu}\ti{u}_1(s)\wedge\cdots\wedge dexp_{\nu}\ti{u}_n(s)\|=\cos^n(\sqrt{\La}s)\|dexp_{\nu}\ti{u}_1(0)\wedge\cdots\wedge dexp_{\nu}\ti{u}_n(0)\|=\cos^n(\sqrt{\La}s)$.
\begin{corollary}\label{distortion of normal exponential map}
Under the above setting,
$$\|dexp_{\nu}u_1(s)\wedge\cdots\wedge dexp_{\nu}u_n(s)\|\leq \cos^n(\sqrt{\La}s).$$
\end{corollary}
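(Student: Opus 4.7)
The plan is to combine Lemma \ref{normal comparison lemma} with an explicit Jacobi-field computation in the constant-curvature comparison space $\ti M$. Concretely, I would first verify the identity
$$\|dexp_{\nu}\ti{u}_1(s)\wedge\cdots\wedge dexp_{\nu}\ti{u}_n(s)\|=\cos^n(\sqrt{\La}s),$$
and then invoke Lemma \ref{normal comparison lemma} directly.

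For the comparison identity, I would note that for each $i$, the vector field $s\mapsto dexp_{\nu}\,\ti{u}_i(s)$ along the normal geodesic $\ti\ga(s)=exp_{\nu}(s\nu(\ti p))$ is a Jacobi field: indeed $\ti u_i(s)$ is the horizontal lift of the parallel transport of $\ti e_i$ along $\pi(s\nu(\ti p))$, so by the standard correspondence between horizontal lifts and variations through normal geodesics, $J_i(s):=dexp_{\nu}\,\ti{u}_i(s)$ is the variation field of a family of geodesics leaving $\ti\Si$ orthogonally, hence a Jacobi field. Its initial conditions are $J_i(0)=\ti e_i\in T_{\ti p}\ti\Si$ and $J_i'(0)=D^\perp_{\ti e_i}\nu=0$ because $\ti\Si$ is totally geodesic (its shape operator vanishes). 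In constant sectional curvature $\La$, the Jacobi equation reduces to $J_i''+\La J_i=0$, and the unique solution with these initial data is
$$J_i(s)=\cos(\sqrt{\La}\,s)\,\ti E_i(s),$$
where $\ti E_i(s)$ denotes parallel transport of $\ti e_i$ along $\ti\ga$.

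Since $\{\ti E_1(s),\ldots,\ti E_n(s)\}$ remains an orthonormal frame spanning the orthogonal complement of $\ti\ga'(s)$, the wedge product has norm
$$\|J_1(s)\wedge\cdots\wedge J_n(s)\|=\cos^n(\sqrt{\La}\,s),$$
establishing the identity. Combining this with Lemma \ref{normal comparison lemma} yields the asserted bound
$$\|dexp_{\nu}u_1(s)\wedge\cdots\wedge dexp_{\nu}u_n(s)\|\leq \cos^n(\sqrt{\La}\,s),$$
valid for $0\leq s\leq s_0$. There is essentially no obstacle here: the work was already done in Lemma \ref{normal comparison lemma}, and what remains is only the explicit Jacobi-field calculation in the model, which is routine once one observes that $\ti\Si$ being totally geodesic forces $J_i'(0)=0$.
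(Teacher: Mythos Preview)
Your proposal is correct and takes essentially the same approach as the paper: compute the model-space distortion explicitly (the paper simply states $\|dexp_{\nu}\ti{u}_1(s)\wedge\cdots\wedge dexp_{\nu}\ti{u}_n(s)\|=\cos^n(\sqrt{\La}s)$ as an easy calculation) and then apply Lemma \ref{normal comparison lemma}. Your Jacobi-field argument is just a fuller justification of that calculation; one small notational point is that $J_i'(0)=D_{\ti e_i}\nu$ (the full covariant derivative, which is tangential since $|\nu|=1$) rather than $D^{\perp}_{\ti e_i}\nu$, but the conclusion $J_i'(0)=0$ from total geodesicity is unchanged.
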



\subsection{Orientable case.}
Let $\Si\in\mS$ be orientable, then $\Si$ is two-sided. Denote $\nu$ by the unit normal vector field along $\Si$. When $Ric_g>0$, $\overline{\Si}$ separates $M$ by Proposition \ref{property of orientation}, i.e. $M\backslash\overline{\Si}=M_1\cup M_2$. Now the signed distance function $d^{\Si}_{\pm}$ is well-defined by
\begin{equation}\label{signed distance}
\left. d^{\Si}_{\pm}(x)= \Bigg\{ \begin{array}{ll}
dist(x, \overline{\Si}), \quad \ \textrm{ if $x\in M_1$}\\
-dist(x, \overline{\Si}), \quad \textrm{ if $x\in M_2$}\\
0,\quad\quad\quad \textrm{ if $x\in\overline{\Si}$}
\end{array} .\right. 
\end{equation}
Consider the levels sets of the signed distance function: $\Si_t=\{x\in M:\ d^{\Si}_{\pm}(x)=t\}$ for $-d(M)\leq t\leq d(M)$. Denote
\begin{equation}\label{S+}
\mS_{+}=\{\Si^{n}\in\mS:\ \Si^{n} \textrm{ is orientable}\}.
\end{equation}
We collect several properties of the distance family as follows:
\begin{proposition}\label{property of level surface 1}
Assume that $Ric_g>0$. For any $\Si\in\mS_{+}$, the distance family $\{\Si_{t}\}_{t\in[-d(M), d(M)]}$ satisfy that:
\begin{itemize}
\vspace{-5pt}
\setlength{\itemindent}{1em}
\addtolength{\itemsep}{-0.7em}
\item[$(a)$] $\Si_{0}=\Si$;
\item[$(b)$] $\mH^{n}(\Si_{t})\leq \mH^n(\Si)$, with equality only if $t=0$;
\item[$(c)$] For any open set $U\subset M\backslash sing(\Si)$ with compact closure $\overline{U}$, 
$\{\Si_{t}\lc U\}_{t\in[-\ep, \ep]}$ forms a smooth foliation of a neighborhood of $\Si$ in $U$, i.e. 
$$\Si_{t}\lc U=\{exp_{\nu}\big(t\nu(x)\big):\ x\in\Si\cap U\},\quad t\in [-\ep, \ep].$$
\vspace{-2em}
\end{itemize}
\end{proposition}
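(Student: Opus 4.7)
The plan is to derive (a) and (c) as essentially formal consequences of the definition and standard tubular-neighborhood theory, and to concentrate the work on (b), where Corollary \ref{distortion of normal exponential map} and Lemma \ref{nearest point projection} combine to give a sharp area bound. Part (a) is immediate from \eqref{signed distance}. For (c), fix an open $U\subset M\setminus sing(\Si)$ with compact closure. Then $\Si\cap\overline{U}$ is a compact piece of smooth embedded hypersurface bounded away from $sing(\Si)$, so the focal distance admits a uniform lower bound $\ep_0>0$ there. By the inverse function theorem, $\exp_\nu:(-\ep_0,\ep_0)\times(\Si\cap U)\to M$ is a diffeomorphism onto a tubular neighborhood; after shrinking to some $\ep\in(0,\ep_0]$ so that points in the resulting tube remain strictly closer to $\Si\cap U$ than to $\overline{\Si}\setminus U$, the signed distance function $d^\Si_\pm$ coincides there with the normal-geodesic parameter, which gives the desired foliation identity.

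The main work is (b). Write $Ric_g\geq n\La$ for some $\La>0$, and fix $t\neq 0$. I first claim that every $x\in\Si_t$ has the form $\exp_\nu(t\nu(y))$ for some $y\in\Si$, with $|t|$ at most the first focal distance in the direction of $\nu(y)$. Indeed, a minimizing geodesic from $x$ to $\overline{\Si}$ has length $|t|$ and, by Lemma \ref{nearest point projection}, lands at a regular point $y\in\Si$ and meets $\Si$ orthogonally; being minimizing, its length cannot exceed the focal distance along it. Defining $F_t:\Si\to M$ by $F_t(y):=\exp_\nu(t\nu(y))$, with $\nu$ chosen so that $F_t$ maps into the side containing $\Si_t$, we obtain $\Si_t\subseteq F_t(\Si)$. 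The area formula together with the pointwise Jacobian estimate of Corollary \ref{distortion of normal exponential map} then yields
$$
\mH^n(\Si_t)\leq \int_\Si |JF_t|\,d\mH^n \leq \cos^n(\sqrt{\La}|t|)\cdot\mH^n(\Si).
$$
For the strict inequality when $t\neq 0$, the standard Riccati comparison for minimal hypersurfaces in $Ric_g\geq n\La$ forces every focal distance to be at most $\pi/(2\sqrt{\La})$; hence either $|t|<\pi/(2\sqrt{\La})$ and $\cos^n(\sqrt{\La}|t|)<1$ strictly, or $\Si_t=\emptyset$ and the bound is trivial.

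The hardest part will be the careful treatment of $sing(\Si)$ in both (c) and the parameterization underlying (b); this is precisely what Lemma \ref{nearest point projection} is designed for, since it guarantees that the nearest-point projection from any exterior point always lands in the regular part, so the geodesic parameterization never needs to touch the singular set. A minor bookkeeping issue is the possible non-injectivity of $F_t$ when $|t|$ is large, but the area formula absorbs this as multiplicity so that the overcount only strengthens the inequality.
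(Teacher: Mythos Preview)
Your proposal is correct and follows essentially the same approach as the paper: both use Lemma \ref{nearest point projection} to show that every point of $\Si_t$ is reached by a normal geodesic from a regular point of $\Si$ (within focal distance), then apply the Heintze--Karcher Jacobian bound (Corollary \ref{distortion of normal exponential map}) via the area formula to get $\mH^n(\Si_t)\leq \cos^n(\sqrt{\La}|t|)\,\mH^n(\Si)$, and both handle (c) by the inverse function theorem on the compact set $\overline{U}$. One small bookkeeping point: your chain $\mH^n(\Si_t)\leq \int_\Si |JF_t|\leq \cos^n(\sqrt{\La}|t|)\mH^n(\Si)$ applies the Jacobian bound on all of $\Si$, but Corollary \ref{distortion of normal exponential map} is only stated for $s$ up to the focal distance; since your first claim already shows $\Si_t\subset F_t(D_t)$ with $D_t=\{y\in\Si:|t|\leq \text{focal distance at }y\}$, simply integrate over $D_t$ instead of $\Si$ (this is exactly what the paper does via the set $\Om$).
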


\begin{proof}
(a) is trivial by construction.

To prove (b), consider the height-$t$ section $S_t(\Si)=\{\xi\in\nu(\Si): \xi=t\nu\}$ of $\nu(\Si)$ for $-d(M)\leq t\leq d(M)$.
\begin{lemma}\label{height-t section isomorphic to Si}
Under the canonical metric of $\nu(\Si)$, $S_t(\Si)$ is isometric to $\Si$.
\end{lemma}
\begin{proof}
First, it is easy to see that the projection map $\pi: \nu(\Si)\rightarrow\Si$ restricts to be a one to one map $\pi:S_t(\Si)\rightarrow\Si$. Also the tangent plane $T_{\xi}S_t(\Si)$ of $S_t(\Si)$ at $\xi=t\nu$ consists all horizontal vectors of $T_{\xi}\nu(\Si)$. 
Then $\pi_{*}: T_{\xi}S_t(\Si)\rightarrow T_{\pi(\xi)}\Si$ gives the isometry by the construction of the metric on $\nu(\Si)$.
\end{proof}
\noindent Recall that $exp_{\nu}: \Om\subset\nu(\Si)\rightarrow M\backslash sing(\Si)$ is surjective, 
so the pre-image $exp_{\nu}^{-1}(\Si_t)$ is totally contained in $S_t(\Si)\cap\Om$, and hence by Corollary \ref{distortion of normal exponential map},
\begin{equation}\label{volume estimate orientable case}
\begin{split}
\mH^n(\Si_t) & \leq \int_{S_t(\Si)\cap\Om} \|(dexp_{\nu})_* dvol_{S_t(\Si)}\|=\int_{S_t(\Si)\cap\Om}\|dexp_{\nu}u_1(s)\wedge\cdots\wedge dexp_{\nu}u_n(s)\|\\
                     & \leq \int_{\Si}\cos^n(\sqrt{\La}t)d\mH^n\leq \cos^n(\sqrt{\La}t)\mH^n(\Si).
\end{split}
\end{equation}

To prove (c), we first realize that $\nu(\Si)$ is globally isometric to $\Si\times\R$ when $\Si$ is orientable, so that $\nu(\Si)$ has a global smooth foliation structure. When restricted to the zero section, the normal exponential map $exp_{\nu}: \nu(\Si)\rightarrow M$ is the identity map, and has non-degenerate tangent map. As the closure $\overline{U}$ is a compact subset of $M\backslash sing(\Si)$, we can use the Inverse Function Theorem to infer that $exp_{\nu}$ is a diffeomorphism in a small neighborhood of $exp_{\nu}^{-1}(\Si\cap U)$. Hence (c) follows.
\end{proof}

\subsection{Non-orientable case.}
Given $\Si\in\mS$ non-orientable, $\overline{\Si}$ does not separate $M$ by Proposition \ref{property of orientation}. Denote $d^{\Si}(x)=dist(x, \overline{\Si})$ by the distance function (without sign). Consider the level sets of $d^{\Si}$: $\Si_t=\{x\in M:\ d^{\Si}(x)=t\}$ for $0\leq t\leq d(M)$. Denote
\begin{equation}\label{S-}
\mS_{-}=\{\Si^{n}\in\mS:\ \Si^{n} \textrm{ is non-orientable}\}.
\end{equation}
We have:
\begin{proposition}\label{property of level surface 2}
Assume that $Ric_g>0$. For any $\Si\in\mS_{-}$, the distance family $\{\Si_t\}_{0\leq 0\leq d(M)}$ satisfy that:
\begin{itemize}
\vspace{-5pt}
\setlength{\itemindent}{1em}
\addtolength{\itemsep}{-0.7em}
\item[$(a)$] $\Si_{0}=\Si$;
\item[$(b)$] $\mH^{n}(\Si_{t})<2\mH^n(\Si)$, for all $0\leq t\leq d(M)$;
\item[$(c)$] When $t\rightarrow 0$, $\mH^n(\Si_t)\rightarrow 2\mH^n(\Si)$, and $\Si_t$ converge smoothly to a double cover of $\Si$ in any open set $U\subset M\backslash sing(\Si)$ with compact closure $\overline{U}$.
\vspace{-5pt}
\end{itemize}
\end{proposition}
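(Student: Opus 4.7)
The approach mirrors the orientable case (Proposition \ref{property of level surface 1}) but carefully accounts for the fact that $\nu(\Si)$ is now non-trivial; indeed by Lemma \ref{orientable and two-sided} this non-triviality is exactly what non-orientability of $\Si$ means (recall $M$ is orientable). Part (a) is immediate from the definition of $\Si_t$.

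For part (b), I would first pin down the geometry of the $t$-section $T_t(\Si) := \{\xi \in \nu(\Si) : |\xi| = t\}$ of the normal bundle. Over each $p \in \Si$ the fibre consists of the two antipodal points $\pm t\nu_p$; since $\nu(\Si)$ is non-trivial, $T_t(\Si)$ is a \emph{connected} double cover of $\Si$, hence canonically isomorphic to the orientable double cover $\ti{\Si}$. The horizontal/vertical argument of Lemma \ref{height-t section isomorphic to Si} shows that under the canonical metric on $\nu(\Si)$, the projection $\pi|_{T_t}$ is a local isometry, so $\mH^n(T_t(\Si)) = 2\mH^n(\Si)$. By Lemma \ref{nearest point projection}, the map $exp_{\nu}\colon T_t(\Si)\cap \Om \to \Si_t$ is surjective. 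Applying Corollary \ref{distortion of normal exponential map} exactly as in inequality \eqref{volume estimate orientable case} then gives
\begin{equation*}
\mH^n(\Si_t) \;\leq\; \int_{T_t(\Si)\cap \Om} \cos^n(\sqrt{\La}t)\, d\mH^n \;\leq\; 2\cos^n(\sqrt{\La}t)\cdot \mH^n(\Si).
\end{equation*}
At $t=0$ this reduces to $\mH^n(\Si) < 2\mH^n(\Si)$, while for $t>0$ the Riccati comparison forces the focal distance from a minimal hypersurface in a manifold with $Ric_g>0$ to be at most $\pi/(2\sqrt{\La})$, so either $\Si_t=\emptyset$ or $\cos^n(\sqrt{\La}t)<1$ strictly.

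For part (c), the argument is local-then-global through the identification in (b). Around any regular $p \in \Si$ the normal bundle is trivialisable, so a local unit normal and signed distance $d^{\Si}_{\pm}$ exist on a tubular neighbourhood; the Inverse Function Theorem (exactly as in Proposition \ref{property of level surface 1}(c)) produces a smooth foliation $\{d^{\Si}_{\pm}=s\}_{|s|<\ep}$ of that neighbourhood. The key distinction from the orientable case is that, since $\Si_t$ is defined by the \emph{unsigned} distance, locally $\Si_t$ coincides with the \emph{union} of both sheets $\{d^{\Si}_{\pm}=t\}$ and $\{d^{\Si}_{\pm}=-t\}$, each of which smoothly converges to $\Si\cap U$ as $t\to 0$. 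Globally these local sheets are glued through the isometry $T_t(\Si)\simeq\ti{\Si}$ from (b): $\Si_t$ in a tubular neighbourhood of $U$ is the image of $T_t(\Si)|_U$ under $exp_{\nu}$, and this map converges in $C^{\infty}$ to the $2$-to-$1$ covering projection $\ti{\Si}|_U \to \Si\cap U$. Area convergence $\mH^n(\Si_t) \to 2\mH^n(\Si)$ follows because the volume distortion in Corollary \ref{distortion of normal exponential map} tends uniformly to $1$ on compact regular subsets, and the singular set of $\Si$ carries zero $\mH^n$-measure.

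The main obstacle is verifying that the two local sheets $\{d^{\Si}_{\pm}=\pm t\}$ fit together globally without overlap or gap, and this is exactly where the orientable double cover enters: it is the non-triviality of $\nu(\Si)$ that makes the global sphere bundle $T_t(\Si)$ a single connected cover of $\Si$ rather than two disjoint copies, and this is precisely what promotes the local two-sheeted picture into a single global double cover of $\Si$. A secondary subtlety is handling $sing(\Si)$, which is managed as in (b) by its vanishing $\mH^n$-measure together with the surjectivity of $exp_{\nu}$ onto $M\setminus sing(\Si)$ provided by Lemma \ref{nearest point projection}.
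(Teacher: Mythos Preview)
Your proposal is correct and follows essentially the same approach as the paper: identify the height-$t$ section $\ti{S}_t(\Si)=\{\xi\in\nu(\Si):|\xi|=t\}$ as an isometric double cover of $\Si$, apply the Heintze--Karcher comparison (Corollary \ref{distortion of normal exponential map}) to get $\mH^n(\Si_t)\leq 2\cos^n(\sqrt{\La}t)\mH^n(\Si)$, and use the Inverse Function Theorem near the zero section for the smooth convergence in (c). Your treatment is in fact slightly more explicit than the paper's in two places: you spell out why the strict inequality in (b) holds across the full range of $t$ (via the focal-distance bound), and you explain why non-triviality of $\nu(\Si)$ makes $T_t(\Si)$ a \emph{connected} double cover rather than two disjoint sheets, which clarifies the global picture in (c).
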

\begin{proof}
(a) is by construction.

For (b), let the height-$t$ section of $\nu(\Si)$ be $\ti{S}_t(\Si)=\{\xi\in\nu(\Si): |\xi|=t\}$ for $0\leq t\leq d(M)$. Similar as the proof of Lemma \ref{height-t section isomorphic to Si}, the projection map $\pi: \ti{S}_t(\Si)\rightarrow\Si$ is locally isometric. Also as the fiber of $\nu(\Si)$ is one dimensional, $\pi$ is a 2-to-1 map. Hence $\pi: \ti{S}_t(\Si)\rightarrow\Si$ is an isometric double cover. The pre-image of the exponential map $exp_{\nu}^{-1}(\Si_t)$ is then contained in $\ti{S}_t\cap \Om$, with $\Om$ as above. By the volume comparison estimates in (\ref{volume estimate orientable case}),
\begin{equation}\label{volume estimate non-orientable case}
\mH^n(\Si_t)\leq \int_{\ti{S}_t(\Si)\cap\Om} \|(dexp_{\nu})_* dvol_{\ti{S}_t(\Si)}\|\leq 2\int_{\Si}\cos^n(\sqrt{\La}t)d\mH^n\leq 2\cos^n(\sqrt{\La}t)\mH^n(\Si).
\end{equation}

For (c), to prove that $\mH^n(\Si_t)\rightarrow 2\mH^n(\Si)$, as $t\rightarrow 0$, by (\ref{volume estimate non-orientable case}), we only need to prove that $\lim_{t\rightarrow 0}\mH^n(\Si_t)\geq 2\mH^n(\Si)$, and this follows from the smooth convergence $\Si_t\rightarrow 2\Si$ on any open set $U\subset\subset M\backslash sing(\Si)$. By similar argument as Proposition \ref{property of level surface 1}(c), when restricted to a small neighborhood of $exp_{\nu}^{-1}(\Si\cap U)$, $exp_{\nu}: \nu(\Si)\rightarrow M$ is a diffeomorphism. Therefore, the convergence $\Si_t\rightarrow 2\Si$ on $U$ 
follows from the fact that $\ti{S}_t(\Si)$ converge smoothly to a double cover of the zero section, as $t\rightarrow 0$. 
\end{proof}


\section{Almgren-Pitts min-max theory}\label{Almgren-Pitts min-max theory}

In this section, we will introduce the min-max theory developed by Almgren and Pitts \cite{AF62, AF65, P81}. We will mainly follow \cite[\S 4]{Z12} \cite[4.1]{P81} and \cite[\S 7 and \S 8]{MN12}. We refer to \S \ref{notions of geometric measure theory} for the notions of Geometric Measure Theory. At the end of this section, we will recall the characterization of the orientation structure of the min-max hypersurfaces proved by the author in \cite{Z12}.


\subsection{Homotopy sequences.}\label{homotopy sequences}

\begin{definition}\label{cell complex}
(cell complex.)
\begin{enumerate}
\vspace{-5pt}
\addtolength{\itemsep}{-0.7em}
\item For $m\in\N$, $I^m=[0, 1]^m$, $I^m_0=\partial I^m=I^n\backslash (0, 1)^m$;
\item For $j\in\N$, $I(1, j)$ is the cell complex of $I$, whose $1$-cells are all intervals of form $[\frac{i}{3^{j}}, \frac{i+1}{3^{j}}]$, and $0$-cells are all points $[\frac{i}{3^{j}}]$; 
$I(m, j)=I(1, j)\otimes \cdots \otimes I(1, j)$ ($m$ times) is a cell complex on $I^m$;
\item For $p\in\N$, $p\leq m$, $\al=\al_1\otimes\cdots\otimes\al_m$ is a $p$-cell if for each $i$, $\al_i$ is a cell of $I(1, j)$, and $\sum_{i=1}^mdim(\al_i)=p$. $0$-cell is called a vertex;
\item $I(m, j)_p$ denotes the set of all $p$-cells in $I(m, j)$, and $I_0(m, j)_p$ denotes the set of $p$-cells of $I(m, j)$ supported on $I^m_0$;
\item Given a $p$-cell $\al\in I(m, j)_p$, and $k\in\N$, $\al(k)$ denotes the $p$-dimensional sub-complex of $I(m, j+k)$ formed by all cells contained in $\al$. For $q\in\N$, $q\leq p$, $\al(k)_q$ and $\al_0(k)_q$ denote respectively the set of all $q$-cells of $I(m, j+k)$ contained in $\al$, or in the boundary of $\al$;
\item $T(m, j)=I(m-1, j)\otimes\{[1]\}$, $B(m, j)=I(m-1, j)\otimes\{[0]\}$ and $S(m, j)=I_0(m-1, j)\otimes I(1, j)$ denote the top, bottom and side sub-complexes of $I(m, j)$ respectively;
\item The boundary homeomorphism $\partial: I(m, j)\rightarrow I(m, j)$ is given by
$$\partial(\al_1\otimes\cdots\otimes\al_m)=\sum_{i=1}^m(-1)^{\si(i)}\al_1\otimes\cdots\otimes \partial \al_i\otimes\cdots\otimes\al_m,$$
where $\si(i)=\sum_{l<i}dim(\al_l)$, $\partial[a, b]=[b]-[a]$ if $[a, b]\in I(1, j)_1$, and $\partial[a]=0$ if $[a]\in I(1, j)_0$;
\item The distance function $\md: I(m, j)_0\times I(m, j)_0\rightarrow\N$ is defined as $\md(x, y)=3^{j}\sum_{i=1}^m|x_i-y_i|$;
\item The map $\n(i, j): I(m, i)_{0}\rightarrow I(m, j)_{0}$ is defined as: $\n(i, j)(x)\in I(m, j)_{0}$ is the unique element of $I(m, j)_0$, such that $\md\big(x, \n(i, j)(x)\big)=\inf\big\{\md(x, y): y\in I(m, j)_{0}\big\}$.
\end{enumerate}
\end{definition}

As we are mainly interested in applying the Almgren-Pitts theory to the 1-parameter families, in the following of this section, our notions will be restricted to the case $m=1$.

Consider a map to the space of integral cycles: $\phi: I(1, j)_{0}\rightarrow\Z_{n}(M^{n+1})$. The \emph{fineness of $\phi$} is defined as:
\begin{equation}\label{fineness}
\mf(\phi)=\sup\Big\{\frac{\M\big(\phi(x)-\phi(y)\big)}{\md(x, y)}:\ x, y\in I(1, j)_{0}, x\neq y\Big\}.
\end{equation}
$\phi: I(1, j)_{0}\rightarrow\big(\Z_{n}(M^{n+1}), \{0\}\big)$ denotes a map such that $\phi\big(I(1, j)_{0}\big)\subset\Z_{n}(M^{n+1})$ and $\phi|_{I_{0}(1, j)_{0}}=0$, i.e. $\phi([0])=\phi([1])=0$.

\begin{definition}\label{homotopy for maps}
Given $\de>0$ and $\phi_{i}: I(1, k_{i})_{0}\rightarrow\big(\Z_{n}(M^{n+1}), \{0\}\big)$, $i=1, 2$, we say \emph{$\phi_{1}$ is $1$-homotopic to $\phi_{2}$ in $\big(\Z_{n}(M^{n+1}), \{0\}\big)$ with fineness $\de$}, if $\exists\ k_{3}\in\N$, $k_{3}\geq\max\{k_{1}, k_{2}\}$, and
$$\psi: I(1, k_{3})_{0}\times I(1, k_{3})_{0}\rightarrow \Z_{n}(M^{n+1}),$$
such that
\begin{itemize}
\vspace{-5pt}
\setlength{\itemindent}{1em}
\addtolength{\itemsep}{-0.7em}
\item $\mf(\psi)\leq \de$;
\item $\psi([i-1], x)=\phi_{i}\big(\n(k_{3}, k_{i})(x)\big)$, $i=1, 2$;
\item $\psi\big(I(1, k_{3})_{0}\times I_{0}(1, k_{3})_{0}\big)=0$.
\end{itemize}
\end{definition}

\begin{definition}\label{(1, M) homotopy sequence}
A \emph{$(1, \M)$-homotopy sequence of mappings into $\big(\Z_{n}(M^{n+1}), \{0\}\big)$} is a sequence of mappings $\{\phi_{i}\}_{i\in\N}$,
$$\phi_{i}: I(1, k_{i})_{0}\rightarrow\big(\Z_{n}(M^{n+1}), \{0\}\big),$$
such that $\phi_{i}$ is $1$-homotopic to $\phi_{i+1}$ in $\big(\Z_{n}(M^{n+1}), \{0\}\big)$ with fineness $\de_{i}$, and
\begin{itemize}
\vspace{-5pt}
\setlength{\itemindent}{1em}
\addtolength{\itemsep}{-0.7em}
\item $\lim_{i\rightarrow\infty}\de_{i}=0$;
\item $\sup_{i}\big\{\M(\phi_{i}(x)):\ x\in I(1, k_{i})_{0}\big\}<+\infty$.
\end{itemize}
\end{definition}

\begin{definition}\label{homotopy for sequences}
Given two $(1, \M)$-homotopy sequences of mappings $S_{1}=\{\phi^{1}_{i}\}_{i\in\N}$ and $S_{2}=\{\phi^{2}_{i}\}_{i\in\N}$ into $\big(\Z_{n}(M^{n+1}), \{0\}\big)$, \emph{$S_{1}$ is homotopic with $S_{2}$} if $\exists\ \{\de_{i}\}_{i\in\N}$, such that
\begin{itemize}
\vspace{-5pt}
\setlength{\itemindent}{1em}
\addtolength{\itemsep}{-0.7em}
\item $\phi^{1}_{i}$ is $1$-homotopic to $\phi^{2}_{i}$ in $\big(\Z_{n}(M^{n+1}), \{0\}\big)$ with fineness $\de_{i}$;
\item $\lim_{i\rightarrow \infty}\de_{i}=0$.
\end{itemize}
\end{definition}

The relation ``is homotopic with" is an equivalent relation on the space of $(1, \M)$-homotopy sequences of mapping into $\big(\Z_{n}(M^{n+1}), \{0\}\big)$ (see \cite[\S 4.1.2]{P81}). An equivalent class is a \emph{$(1, \M)$ homotopy class of mappings into $\big(\Z_{n}(M^{n+1}), \{0\}\big)$}. Denote the set of all equivalent classes by $\pi^{\#}_{1}\big(\Z_{n}(M^{n+1}, \M), \{0\}\big)$. Similarly we can define the $(1, \F)$-homotopy class (using another fineness associated with the $\F$-norm in place of the $\M$-norm in (\ref{fineness})), and denote the set of all equivalent classes by $\pi^{\#}_{1}\big(\Z_{n}(M^{n+1}, \F), \{0\}\big)$.


\subsection{Almgren's isomorphism.}\label{Almgren isomorphism}

Almgren \cite{AF62} showed that the homotopy groups of $\Z_n(M)$ (under $\M$ and $\F$ topology) are all isomorphic to the top homology group of $M$ by constructing an isomorphism as follows.

By \cite[Corollary 1.14]{AF62}, there exists a small number $\nu_M>0$ (depending only on $M$), such that for any two $n$-cycles $T_1, T_2\in \Z_n(M^{n+1})$, if $\F(T_2-T_1)\leq \nu_M$, then there exists an $(n+1)$-dimensional integral current $Q\in \bI_{n+1}(M)$, with $\partial Q=T_2-T_1$, and $\M(Q)=\F(T_2-T_1)$. $Q$ is called the {\em isoperimetric choice} for $T_2-T_1$.

Given $\phi: I(1, k)_0\rightarrow \Z_n(M^{n+1})$, with $\f(\phi)\leq \de\leq \nu_M$, then for any $1$-cell $\al\in I(1, k)_1$, with $\al=[t^1_{\al}, t^2_{\al}]$, $\F\big(\phi(t^1_{\al})-\phi(t^2_{\al})\big)\leq \M\big(\phi(t^1_{\al})-\phi(t^2_{\al})\big)\leq \f(\phi) \leq \nu_M$. So there exists an isoperimetric choice $Q_{\al}\in \bI_{n+1}(M^{n+1})$, with 
$$\M(Q_{\al})=\F\big(\phi(t^1_{\al})-\phi(t^2_{\al})\big),\ \textrm{and}\ \partial Q_{\al}=\phi(t^2_{\al})-\phi(t^1_{\al}).$$
Now the sum of the isoperimetric choices for all $1$-cells is an $(n+1)$-dimensional integral current, i.e. $\sum_{\al\in I(1, k)_1}Q_{\al}\in \bI_{n+1}(M^{n+1})$.
We call the map:
\begin{equation}\label{Almgren isomorphism1}
F_A: \phi\rightarrow \sum_{\al\in I(1, k)_1}Q_{\al}
\end{equation}
{\em Almgren's isomorphism} (the name comes from Theorem \ref{isomorphism}).

Given a $(1, \M)$-homotopy sequence of mappings $S=\{\phi_i\}_{i\in\N}$ into $\big(\Z_{n}(M^{n+1}), \{0\}\big)$, take $i$ large enough, and $\phi_i: I(1, k_i)_0\rightarrow \big(\Z_{n}(M^{n+1}), \{0\}\big)$, such that $\f(\phi_i)\leq \de_i\leq \nu_M$. Then
$$F_A(\phi_i)=\sum_{\al\in I(1, k_i)_1}Q_{\al}$$
is an $(n+1)$-dimensional integral cycle as $\phi_i([0])=\phi_i([1])=0$, and represents an $(n+1)$-dimensional integral homology class
$$\big[\sum_{\al\in I(1, k_i)_1}Q_{\al}\big]\in H_{n+1}(M^{n+1}).$$
Moreover, Almgren \cite[\S 3.2]{AF62} showed that this homology class depends only on the homotopy class of $\{\phi_i\}$. Hence it reduces to a map
$$F_A: \pi^{\#}_{1}\big(\Z_{n}(M^{n+1}, \M), \{0\}\big)\rightarrow H_{n+1}(M^{n+1}),$$
defined in \cite[\S 3.2]{AF62} as:
\begin{equation}\label{Almgren isomorphism2}
F_A: [\{\phi_i\}_{i\in\N}] \rightarrow \big[\sum_{\al\in I(1, k_i)_1}Q_{\al}\big].
\end{equation}
Almgren also proved that this mapping is an isomorphism. 

\begin{theorem}\label{isomorphism}
\emph{(\cite[Theorem 13.4]{AF62} and \cite[Theorem 4.6]{P81})} The followings are all isomorphic under $F_A$:
$$H_{n+1}(M^{n+1}),\ \pi^{\#}_{1}\big(\Z_{n}(M^{n+1}, \M), \{0\}\big),\ \pi^{\#}_{1}\big(\Z_{n}(M^{n+1}, \F), \{0\}\big).$$
\end{theorem}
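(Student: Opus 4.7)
The plan is to verify in turn that $F_A$ is well-defined on $(1,\M)$-homotopy classes, that it is a group homomorphism, that it is surjective, and that it is injective; then to transfer the result from the $\M$-version to the $\F$-version.

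Well-definedness reduces to the observation that if $Q_\al$ and $Q'_\al$ are two isoperimetric choices for the same difference $\phi(t^2_\al) - \phi(t^1_\al)$, then $Q_\al - Q'_\al$ is a closed integral $(n+1)$-current of mass at most $2\nu_M$. Shrinking $\nu_M$ if necessary, the isoperimetric inequality \cite[Corollary 1.14]{AF62} forces any such small-mass $(n+1)$-cycle to bound in $\bI_{n+2}(M)$, so $\bigl[\sum_\al Q_\al\bigr] \in H_{n+1}(M)$ is independent of the isoperimetric choices and of grid refinement. Invariance under a $1$-homotopy $\psi$ of small fineness uses the same idea one dimension higher: pick an isoperimetric $(n+2)$-filling $R_\be$ for each $2$-cell $\be$ of the product complex $I(1,k_3) \otimes I(1,k_3)$; summing telescopes to $\partial \sum_\be R_\be = F_A(\phi_2) - F_A(\phi_1)$, the side contributions vanishing because $\psi$ is trivial on $I(1,k_3)_0 \times I_0(1,k_3)_0$. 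Additivity of $F_A$ under concatenation of sequences is then immediate.

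For surjectivity, $M^{n+1}$ is a closed $(n+1)$-manifold, so $H_{n+1}(M;\mZ) = \mZ \cdot [M]$ if $M$ is orientable and $0$ otherwise; only the orientable case is nontrivial. A Morse function $f: M \to [0,1]$ with unique maximum and minimum yields, on a generic grid $I(1,j)_0$ avoiding critical values, the sweepout $\phi_j([i/3^j]) = \partial [[\{f \le i/3^j\}]]$. The coarea formula applied to $f$ shows $\mf(\phi_j) \to 0$ as $j \to \infty$, and the isoperimetric choices are $[[\{i/3^j \le f \le (i+1)/3^j\}]]$, which sum to $[[M]]$; hence $[M]$ lies in the image.

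The deepest step, and the main obstacle, is injectivity. Suppose $F_A([\{\phi_i\}]) = 0$, so that for each sufficiently large $i$ the cycle $C_i := \sum_\al Q_\al$ equals $\partial R_i$ for some $R_i \in \bI_{n+2}(M)$. One would construct a discrete null-homotopy of $\phi_i$ by slicing $R_i$ over a $2$-dimensional cubical refinement of $I \times I$ and selecting isoperimetric $(n+2)$-fillings on each $2$-cell $\be$; the boundaries in the $1$-cells then assemble into a map $\psi: I(1,k)_0 \times I(1,k)_0 \to \Z_n(M)$ interpolating from $\phi_i$ to $0$ and vanishing on $I(1,k)_0 \times I_0(1,k)_0$. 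Making the fineness of $\psi$ tend to zero as $i \to \infty$ requires an iterated isoperimetric decomposition with uniform control, as developed in \cite[\S 4]{P81}. The equivalence $\pi^{\#}_{1}\bigl(\Z_n(M,\M),\{0\}\bigr) \cong \pi^{\#}_{1}\bigl(\Z_n(M,\F),\{0\}\bigr)$ is handled separately: there is a tautological forgetful map since $\F \le \M$, and Theorem \ref{discretization and identification: simple form} provides the inverse by promoting any $\F$-continuous family of boundaries of finite-perimeter sets to a $(1,\M)$-homotopy sequence with the same Almgren image. Composing with the already-established $\M$-isomorphism closes the triangle.
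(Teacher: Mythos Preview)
The paper does not give its own proof of this theorem; it is stated with attribution to \cite[Theorem 13.4]{AF62} and \cite[Theorem 4.6]{P81} and used as a black box. So there is no ``paper's proof'' to compare against, but your sketch has genuine errors worth flagging.

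The central problem is dimensional. You repeatedly invoke integral $(n+2)$-currents in $M^{n+1}$: an ``isoperimetric $(n+2)$-filling $R_\be$'' for each $2$-cell, and later ``$C_i = \partial R_i$ for some $R_i \in \bI_{n+2}(M)$''. But $\bI_{n+2}(M^{n+1}) = 0$, so these objects do not exist and the filling/slicing scheme you describe collapses. In the top-codimensional setting the correct mechanism is the Constancy Theorem: any closed $(n+1)$-current in $M^{n+1}$ is an integer multiple of $[[M]]$, so a closed $(n+1)$-current of mass less than $\mathrm{vol}(M)$ is identically zero. This is what gives uniqueness of the isoperimetric choice (not merely homology-independence), and for homotopy invariance one argues that the signed sum of isoperimetric choices around the boundary of each $2$-cell is a small-mass closed $(n+1)$-current, hence zero, which telescopes to $F_A(\phi_1) = F_A(\phi_2)$. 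Injectivity is the genuinely hard part and requires the interpolation machinery of \cite[\S 4]{P81}; your slicing-of-$R_i$ outline does not survive the dimensional correction.

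Your proposed route to the $\M$/$\F$ equivalence via Theorem~\ref{discretization and identification: simple form} is also off target. That theorem takes as input a map $\Phi:[0,1]\to\Z_n(M)$ continuous in the flat norm with $\Phi(x) = \partial[[\Om_x]]$, whereas elements of $\pi^{\#}_1(\Z_n(M,\F),\{0\})$ are equivalence classes of \emph{discrete} $(1,\F)$-homotopy sequences, with no a priori representation as boundaries of Caccioppoli sets. The equivalence in \cite[Theorem 4.6]{P81} is established by a direct interpolation argument (replacing $\F$-small steps by $\M$-small ones via isoperimetric choices and slicing), not by the discretization theorem of the present paper.
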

\noindent We also call this map {\em Almgren's isomorphism}.


\subsection{Existence of min-max hypersurface.}

\begin{definition}
(Min-max definition) Given $\Pi\in\pi^{\#}_{1}\big(\Z_{n}(M^{n+1}, \M), \{0\}\big)$, define:
$$\bL: \Pi\rightarrow\R^{+}$$
as a function given by:
$$\bL(S)=\bL(\{\phi_{i}\}_{i\in\N})=\limsup_{i\rightarrow\infty}\max\big\{\M\big(\phi_{i}(x)\big):\ x \textrm{ lies in the domain of $\phi_{i}$}\big\}.$$
The \emph{width of $\Pi$} is defined as
\begin{equation}\label{width}
\bL(\Pi)=\inf\{\bL(S):\ S\in\Pi\}.
\end{equation}
$S\in\Pi$ is call a \emph{critical sequence}, if $\bL(S)=\bL(\Pi)$. Let $K: \Pi\rightarrow\{\textrm{compact subsets of $\V_{n}(M^{n+1})$}\}$ be defined by
$$K(S)=\{V:\ V=\lim_{j\rightarrow\infty}|\phi_{i_{j}}(x_{j})|:\ \textrm{$x_{j}$ lies in the domain of $\phi_{i_{j}}$}\}.$$
A \emph{critical set} of $S$ is $C(S)=K(S)\cap\{V:\ \M(V)=\bL(S)\}$.
\end{definition}

The celebrated min-max theorem of Almgren-Pitts (Theorem 4.3, 4.10, 7.12, Corollary 4.7 in \cite{P81}) and Schoen-Simon (for $n\geq 6$ \cite[Theorem 4]{SS81}) is as follows.
\begin{theorem}\label{AP min-max theorem}
Given a nontrivial $\Pi\in\pi^{\#}_{1}\big(\Z_{n}(M^{n+1}, \M), \{0\}\big)$, then $\bL(\Pi)>0$, and there exists a stationary integral varifold $V$, whose support is a disjoint collection of connected, closed, singular, minimal hypersurfaces $\{\Si_i\}_{i=1}^l$, with singular sets of Hausdorff dimension no larger than $n-7$, (which may have multiplicity, say $m_i$), such that $V=\sum_{i=1}^lm_i[\Si_i]$, and
$$\sum_{i=1}^lm_i\mH^n(\Si_i)=\bL(\Pi).$$
In particular, $V$ lies in the critical set $C(S)$ of some critical sequence $S\in\Pi$.
\end{theorem}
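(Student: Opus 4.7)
The plan is to follow the Almgren--Pitts--Schoen--Simon strategy in four stages: positivity of the width, pull-tight, extraction of an almost minimizing varifold, and regularity. Positivity is immediate from Almgren's isomorphism (Theorem \ref{isomorphism}): since $\Pi$ is nontrivial, it corresponds to a nonzero class in $H_{n+1}(M)$, so any critical sequence $\{\phi_i\}$ must produce, via $F_A$, an integral $(n+1)$-current representing this nonzero class. If $\max_x \M(\phi_i(x))$ tended to zero, then the isoperimetric choices $Q_\al$ with $\M(Q_\al) = \F(\phi_i(t^1_\al) - \phi_i(t^2_\al))$ would collectively have vanishing mass, contradicting the nontriviality of the homology class they represent.

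For the pull-tight step, I would deform a given critical sequence $S = \{\phi_i\}$ into a new critical sequence $S^*$, homotopic to $S$ with $\bL(S^*) = \bL(\Pi)$, such that every $V \in C(S^*)$ is stationary. The idea is that on the space of non-stationary varifolds in a neighborhood of $K(S)$ there is an $\F$-continuous assignment $V \mapsto X_V$ of smooth vector fields with $\delta V(X_V)$ strictly negative, and the resulting time-$t$ flow strictly decreases mass. Transferring this flow back to discrete deformations of each $\phi_i$ --- via an Almgren-style interpolation between discrete and continuous families with controlled fineness --- produces the desired $\phi_i^*$.

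The technical heart is the extraction of a varifold $V \in C(S^*)$ which is \emph{almost minimizing in small annuli}: for each $p \in M$ and each small $r>0$, $V$ admits no deformation supported in a thin annulus around $p$ that decreases its mass by more than $\ep$ while remaining within $\ep$ in $\F$-distance. The proof is by contradiction. If every $V \in C(S^*)$ failed to be almost minimizing at some location and scale, then by compactness of $C(S^*)$ one could cover $M$ by finitely many balls in each of which a local mass-reducing deformation exists. Assembling these on the cell complex --- exploiting that deformations supported in disjoint annuli commute and that small overlaps can be absorbed into the fineness budget --- would produce, for all large $i$, a new map $\phi_i^{\sharp}$ homotopic to $\phi_i^*$ with $\max_x \M(\phi_i^{\sharp}(x)) < \bL(\Pi) - c$ for some $c>0$ independent of $i$, contradicting the criticality of $S^*$. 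Managing the combinatorial patching on $I(1,j)$ with controlled fineness is what occupies most of Pitts' monograph, and is the main obstacle.

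With $V$ both stationary and almost minimizing in annuli, regularity follows as in \cite{P81, SS81}. In each small annulus around any $p$, one produces a mass-minimizing replacement $V^*$ (whose existence is guaranteed by the almost minimizing property combined with standard compactness), and appeals to codimension-one regularity --- De Giorgi--Federer--Almgren--Simons for $n \leq 5$, and Schoen--Simon for $n \geq 6$ --- to conclude that $V^*$ is a smooth, stable, minimal hypersurface away from a set of Hausdorff dimension at most $n-7$. Overlapping replacements must coincide by unique continuation for stable minimal hypersurfaces, and this transfers the regularity back to $V$ itself. Decomposing $V = \sum_{i=1}^l m_i [\Si_i]$ over the connected components of its support then gives $\sum_i m_i \mH^n(\Si_i) = \M(V) = \bL(S^*) = \bL(\Pi)$, since $V \in C(S^*)$.
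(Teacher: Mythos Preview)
The paper does not prove this theorem: it is stated as the foundational result of the Almgren--Pitts--Schoen--Simon theory and is simply cited (to Theorems 4.3, 4.10, 7.12 and Corollary 4.7 of \cite{P81}, together with \cite[Theorem 4]{SS81} for $n\geq 6$). Your outline is a faithful high-level summary of exactly that argument --- pull-tight to make every element of $C(S^*)$ stationary, the combinatorial contradiction argument to extract a $V\in C(S^*)$ that is almost minimizing in small annuli, and the replacement-plus-Schoen--Simon regularity --- so there is no real divergence in approach; you have just unpacked what the paper treats as a black box.

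One imprecision worth flagging is your positivity step. From $\max_x\M(\phi_i(x))\to 0$ you cannot conclude directly that the \emph{collective} mass $\sum_{\al}\M(Q_\al)$ tends to zero, since the number of $1$-cells is $3^{k_i}$ and grows without bound. The correct mechanism (Pitts' Corollary 4.7) is rather that once every $\M(\phi_i(x))$ is below the isoperimetric threshold, each $\phi_i(x)$ bounds a unique small-mass $(n{+}1)$-current $R_i(x)$; uniqueness forces $Q_\al=R_i(t^2_\al)-R_i(t^1_\al)$, so the sum telescopes to $R_i(1)-R_i(0)=0$, and hence $F_A(\{\phi_i\})=0$ --- contradicting nontriviality of $\Pi$. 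With that adjustment your sketch is correct.
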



\subsection{Orientation and multiplicity.}
As $V$ lies in the critical set $C(S)$, $V$ is a varifold limit of a sequence of integral cycles $\{\phi_{i_j}(x_j)\}_{j\in\N}$. It has been conjectured that $V$ should inherit some orientation structures from $\{\phi_{i_j}(x_j)\}_{j\in\N}$. In fact, we verified this conjecture and gave a characterization of the orientation structure of $V$ in low dimensions (where the support of $V$ is the smooth) in \cite[Proposition 6.1]{Z12}. Some straightforward modifications of the proof will give similar characterization for singular min-max hypersurfaces (in all dimensions) as follows.

\begin{proposition}\label{orientation and multiplicity result}
Let $V$ be the stationary varifold in Theorem \ref{AP min-max theorem}, with $V=\sum_{i=1}^{l}m_i [\Si_{i}]$. 
If $\Si_i$ is non-orientable, then the multiplicity $m_i$ must be an even number.
\end{proposition}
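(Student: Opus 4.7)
The plan is to extend the argument of \cite[Proposition 6.1]{Z12} from the smooth setting to the singular one by performing all key steps on the regular part $reg(\Si_i)$ and using the fact that $sing(\Si_i)$ has Hausdorff codimension at least $7$. First I would extract a flat limit: the uniform mass bound $\sup_j \M(\phi_{i_j}(x_j)) < \infty$ together with the compactness theorem for integral cycles yields, after passing to a subsequence, an integral cycle $T \in \Z_n(M)$ with $\phi_{i_j}(x_j) \to T$ in the flat topology; moreover $spt(T) \subset \bigcup_i \overline{\Si_i}$ since $|\phi_{i_j}(x_j)| \to V$ as varifolds.

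Next I would analyze the local multiplicity of $T$ at a regular point. Fix $p \in reg(\Si_i)$, a small geodesic ball $U \subset M \setminus sing(\Si_i)$ around $p$, and a unit normal $\nu$ on $\Si_i \cap U$. Because $V\lc U = m_i[\Si_i\cap U]$ and the varifold convergence is smooth near $p$ by Allard's regularity theorem, for $j$ large the support of $\phi_{i_j}(x_j)$ in $U$ decomposes into $m_i$ $C^{1,\alpha}$ graphs over $\Si_i \cap U$, each inheriting an orientation from the integer cycle. Letting $a_j, b_j$ count the sheets oriented by $\nu$ and by $-\nu$ respectively, one has $a_j + b_j = m_i$, and passing to the flat limit gives $T\lc U = c \cdot [\Si_i \cap U]$ with $c = \lim_j (a_j - b_j) \in \mZ$ satisfying the crucial congruence $c \equiv m_i \pmod 2$.

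Finally I would globalize the parity. The local integer $c$ depends on the choice of $\nu$ but flips sign under $\nu \mapsto -\nu$, so its parity is intrinsic to $p$. The constancy theorem \cite[26.27]{Si83} applied to the cycle $T$ on any connected orientable open subset $U' \subset reg(\Si_i)$ (with a continuous choice of $\nu$) shows that $T\lc U' = c \cdot [\Si_i \cap U']$ for a single integer $c$. Since $reg(\Si_i)$ is connected by Theorem \ref{strong maximum principle}, the parity $c \pmod 2 \equiv m_i \pmod 2$ is well-defined on all of $\Si_i$. If $\Si_i$ is non-orientable, there is a loop in $reg(\Si_i)$ along which a continuously transported unit normal returns to its opposite; traversing such a loop forces $c = -c$, hence $c = 0$, and therefore $m_i$ must be even.

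The main obstacle I anticipate is justifying the passage from flat convergence on $M$ to flat convergence on the regular part with the signed sheet count preserved, in particular ensuring that no mass of $T$ is lost or absorbed into $sing(\Si_i)$ in the limit. This is controlled by the codimension-$7$ estimate together with the cutoff-function technique employed in Lemma \ref{no stable two-sided minimal hypersurface} and Proposition \ref{property of orientation}: because $\mH^{n-2}(sing(\Si_i))=0$, one may cut $T$ and the approximating cycles away from $sing(\Si_i)$ with arbitrarily small error, reducing everything to the smooth graph picture already present in \cite[Proposition 6.1]{Z12}.
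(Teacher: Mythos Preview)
Your overall strategy is precisely what the paper indicates: it defers the proof to \cite[Proposition 6.1]{Z12} and says only that ``straightforward modifications'' handle the singular set. Your outline---pass to a flat limit $T$ of the min-max cycles, read off a locally constant signed multiplicity $c$ on $reg(\Si_i)$ via the Constancy Theorem, use an orientation-reversing loop to force $c=0$ when $\Si_i$ is non-orientable, and then infer $m_i$ even from a parity relation $c\equiv m_i\pmod 2$---is exactly that plan, and your treatment of $sing(\Si_i)$ via codimension-$7$ cutoffs together with Theorem~\ref{strong maximum principle} for connectedness of $reg(\Si_i)$ is the right modification.

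There is, however, a real gap in the second paragraph. You invoke Allard's regularity theorem to conclude that for $j$ large the support of $\phi_{i_j}(x_j)$ near a regular point decomposes into $m_i$ smooth graphical sheets. Allard's theorem requires the varifold to be stationary (or at least to have generalized mean curvature in $L^p$, $p>n$), but the cycles $\phi_{i_j}(x_j)$ are arbitrary elements of a discrete min-max family and carry no first-variation control whatsoever; their supports can be highly irregular even where the limit $V$ is a smooth multiplicity-$m_i$ plane. Without the sheet decomposition, the counts $a_j+b_j=m_i$ and $a_j-b_j\to c$ are unfounded, and the crucial congruence $c\equiv m_i\pmod 2$ is not established.

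The argument in \cite{Z12} does not apply Allard to the approximating cycles. One way to recover the parity is to use the almost-minimizing structure of Pitts' critical sequence: near a regular point one passes to local \emph{replacements}, which are genuinely area-minimizing (hence smooth and sheeted), are homologous rel boundary to $\phi_{i_j}(x_j)\lc U$, and still converge as varifolds to $m_i[\Si_i\cap U]$; on these the sheet count is legitimate and the intersection number with a short normal arc is unchanged. Alternatively, one can bypass sheets altogether: since $\M(\phi_{i_j}(x_j))\to\|V\|(M)$ there is no mass drop, so for a.e.\ point of $\Si_i\cap U$ the slice of $\phi_{i_j}(x_j)$ along the normal fiber is an integral $0$-current whose unsigned mass tends to $m_i$ while its signed mass tends to $c$, and these have the same parity. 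Once you replace the Allard step by one of these mechanisms, the rest of your outline goes through.
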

\begin{remark}
When a connected component $\Si_i$ is orientable, it represents an integral cycle by Proposition \ref{property of orientation}. While a connected component $\Si_i$ is non-orientable, an even multiple of it also represents an integral cycle---a zero cycle. This result will play a key role in the characterization of the multiplicity in Theorem \ref{main theorem1}. (This result was also used in \cite{Z12, MR15} to characterize the multiplicity of min-max hypersurfaces).
\end{remark}


\section{Discretization and construction of sweepouts}\label{Discretization and construction of sweepouts}

The purpose of this section is to adapt the families of currents constructed by geometric method (in \S \ref{min-max family}) to the Almgren-Pitts setting (in \S \ref{Almgren-Pitts min-max theory}). Usually families constructed by geometric method are continuous under the flat norm topology, but the Almgren-Pitts theory applies only to discrete family continuous under the mass norm topology. Therefore we need to discretize our families and to make them continuous under the mass norm topology. Similar issue was also an essential technical difficulty in the celebrated proof of the Willmore conjecture \cite{MN12}, and also in a previous paper by the author \cite{Z12} which deals with the same problem in low dimensions. A key technical condition in these discretization type theorems in \cite{MN12, MN13, Z12} is the no local mass concentration assumption. Roughly speaking, it means that the weak measure-theoretical closure of the family of currents does not contain any point mass. However, the families used here do not necessarily satisfy this technical assumption, so we will build up a stronger version of the discretization theorem without assuming the no mass concentration condition. Actually, this issue was originally considered by Pitts \cite[\S 3.5, \S3.7]{P81} in another setting. Our strategy is motivated by Pitts's method, and is simpler than Pitts's discretization procedure. In this paper, we only deal with families of currents which are boundaries of sets of finite perimeter. This is already enough for the purpose of many geometric applications, as all the known interesting geometric families (c.f. \cite{MN12, MN13, Z12}) belong to this class. In fact, it is conjectured by Marque and Neves \cite[\S 13.2]{MN12} that the no mass concentration assumption is not necessary, and our result confirms this conjecture in the co-dimension one case. For the purpose of simplicity, we only present the discretization theorem for one-parameter families. The case for multi-parameter families is still true by similar arguments as in \cite[Theorem 13.1]{MN12} using our technical results Proposition \ref{technical 1} and Proposition \ref{technical 2} in place of \cite[Proposition 13.3, 13.5]{MN12}, and will be addressed elsewhere.

Another key ingredient which utilizes the big machinery by Almgren-Pitts is an identification type result. We will show that all the discretized families corresponding to those families constructed in \S \ref{min-max family} belong to the same homotopy class in the sense of Almgren-Pitts. This type of result was proved in \cite{Z12} under the no mass concentration assumption, and we will extend this identification type result to the case without no mass concentration assumption. We prove this by showing that the image of the discretized families under the Almgren's isomorphism represent the top homology class of $M$. Then these families must be homotopic to each other by Theorem \ref{isomorphism}.

The main result can be summarized as the following theorem. Recall that $\C(M)$ is consisted by all subsets of $M$ of finite perimeter.

\begin{theorem}\label{discretization and identification}
Given a continuous mapping
$$\Phi: [0, 1]\rightarrow\big(\Z_{n}(M^{n+1}, \F), \{0\}\big),$$
satisfying
\begin{itemize}
\setlength{\itemindent}{1em}
\addtolength{\itemsep}{-0.7em}

\item[$(a)$] $\Phi(x)=\partial [[\Om_x]]$, $\Om_x\in\C(M)$, for all $x\in [0, 1]$; 
\item[$(b)$] $\sup_{x\in[0, 1]}\M(\Phi(x))<\infty$;
\end{itemize}
then there exists a $(1, \M)$-homotopy sequence
$$\phi_{i}: I(1, k_{i})_{0}\rightarrow \big(\Z_{n}(M^{n+1}, \M), \{0\}\big),$$
and a sequence of homotopy maps
$$\psi_{i}: I(1, k_{i})_{0}\times I(1, k_{i})_{0}\rightarrow\Z_{n}(M^{n+1}, \M),$$
with $k_{i}<k_{i+1}$, and $\{\de_{i}\}_{i\in\N}$ with $\de_{i}>0$, $\de_{i}\rightarrow 0$, and $\{l_{i}\}_{i\in\N}$, $l_{i}\in\N$ with $l_{i}\rightarrow\infty$, such that $\psi_{i}([0], \cdot)=\phi_{i}$, $\psi_{i}([1], \cdot)=\phi_{i+1}|_{I(1, k_{i})_{0}}$, and
\begin{itemize}
\setlength{\itemindent}{1em}
\addtolength{\itemsep}{-0.7em}
\item[$(\rom{1})$] $$\M\big(\phi_{i}(x)\big)\leq \sup\big\{\M\big(\Phi(y)\big):\ x, y\in\al, \textrm{ for some 1-cell }\al\in I(1, l_{i})\big\}+\de_{i},$$
and hence
\begin{equation}\label{max mass control of discrete family}
\bL(\{\phi_{i}\}_{i\in\N})\leq\sup_{x\in[0, 1]}\M\big(\Phi(x)\big);
\end{equation}
\item[$(\rom{2})$] $\f(\psi_{i})<\de_{i}$;
\item[$(\rom{3})$] $\sup\big\{\F\big(\phi_{i}(x)-\Phi(x)\big):\ x\in I(1, k_{i})_{0}\big\}<\de_{i}$,
\item[$(\rom{4})$] If $\Om_0=\emptyset$, $\Om_1=M$, then
$$F_A(\{\phi_i\})=[[M]],$$
where $F_A$ is the Almgren's isomorphism, and $[[M]]$ is the fundamental class of $M$.
\end{itemize}
\end{theorem}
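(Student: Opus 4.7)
The plan is to follow the Marques--Neves framework from \cite[\S 13]{MN12}, which constructs discrete approximations of a flat-continuous family under the auxiliary no-mass-concentration hypothesis, and to remove that hypothesis by exploiting that each $\Phi(x) = \partial [[\Om_x]]$ is the boundary of a set of finite perimeter. The preliminary observation is that the map $\Om \mapsto \partial[[\Om]]$ identifies $\C(M)$ (modulo $[[M]]$) with the integer flat $n$-cycles on $M$ and is bi-continuous with respect to $\F$ and symmetric difference; thus flat-continuity of $\Phi$ lifts to flat-continuity of $x \mapsto [[\Om_x]]$ (after a global sign choice).

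The discretization would be done by subdividing $I(1,k)$ finely enough that consecutive vertices $x,y$ satisfy $\F(\Phi(x)-\Phi(y)) < \eta$ for any preset $\eta>0$, and then replacing $\Phi$ at each vertex by a mass-close representative. I would split the argument into two cases, as hinted by the remark after the theorem. \emph{Case 1} is the standard setting where near every $x$ the mass $y \mapsto \M(\Phi(y))$ is oscillation-controlled on a small $\F$-neighborhood; here the Marques--Neves interpolation/pull-tight construction applies verbatim to yield $\phi_i, \psi_i$ satisfying (I)--(III). \emph{Case 2} handles concentration points: for a pair of nearby vertices $x_1, x_2$ with $|\Om_{x_1} \triangle \Om_{x_2}|$ small (by flat continuity), one interpolates between $\phi_i(x_1)$ and $\phi_i(x_2)$ not through an arbitrary path in $\Z_n(M,\M)$, but through a finite chain of sets $E_0 = \Om_{x_1}, E_1, \dots, E_N = \Om_{x_2}$ in $\C(M)$ obtained by adjusting the symmetric difference one small piece at a time. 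Because each piece of the symmetric difference has small volume, the perimeter bound $\M(\partial[[E_j]]) \leq \max(\M(\Phi(x_1)), \M(\Phi(x_2))) + o(1)$ can be maintained by choosing the pieces carefully (isoperimetrically) inside $\Om_{x_1} \triangle \Om_{x_2}$, and the $\M$-jump between consecutive $E_j$'s is controlled by the individual perimeter of the piece, which is the fineness $\delta_i$.

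Properties (I)--(III) then follow from the construction: (I) because the interpolation in both cases never exceeds the supremal mass of $\Phi$ over the relevant cell up to an error $\delta_i \to 0$; (II) because the pieces in Case 2 and the Marques--Neves interpolation in Case 1 are built precisely to have $\M$-step at most $\delta_i$; (III) because at every vertex we remain $\F$-close to the original $\Phi(x)$, as the isoperimetric adjustments occur in a small symmetric-difference region. The homotopies $\psi_i$ between $\phi_i$ and $\phi_{i+1}|_{I(1,k_i)_0}$ are constructed by linearly interpolating the subdivision parameter, again locally via the same two-case dichotomy.

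For property (IV), the construction in fact yields, for each vertex $x \in I(1,k_i)_0$, a finite-perimeter set $\tilde{\Om}^{\,i}_x \in \C(M)$ with $\phi_i(x) = \partial[[\tilde{\Om}^{\,i}_x]]$ and $\F(\tilde{\Om}^{\,i}_x - \Om_x) < \delta_i$; in particular $\tilde{\Om}^{\,i}_0 = \emptyset$ and $\tilde{\Om}^{\,i}_1 = M$. Once $\delta_i$ is small enough, for each $1$-cell $\al = [t^1_\al, t^2_\al]$ the current $Q_\al := [[\tilde{\Om}^{\,i}_{t^2_\al}]] - [[\tilde{\Om}^{\,i}_{t^1_\al}]]$ has $\partial Q_\al = \phi_i(t^2_\al) - \phi_i(t^1_\al)$ and $\M(Q_\al) = \F(\phi_i(t^2_\al) - \phi_i(t^1_\al))$, so it is an admissible isoperimetric choice in the sense of (\ref{Almgren isomorphism1}). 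Summing over $\al$ telescopes to $[[\tilde{\Om}^{\,i}_1]] - [[\tilde{\Om}^{\,i}_0]] = [[M]]$, which by Theorem \ref{isomorphism} represents $F_A(\{\phi_i\}) = [[M]] \in H_{n+1}(M)$. The main obstacle will be the quantitative control in Case 2: ensuring that the isoperimetric decomposition of the symmetric difference $\Om_{x_1} \triangle \Om_{x_2}$ into small pieces keeps the boundary mass of the intermediate sets $E_j$ from exceeding $\sup_y \M(\Phi(y))$ up to $o(1)$, even when $\M(\Phi)$ concentrates at a single $x_0$; this is precisely where the finite-perimeter hypothesis on $\Phi$ is essential and where the improvement over \cite{MN12} is obtained.
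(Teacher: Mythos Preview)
Your high-level strategy matches the paper's: follow the Marques--Neves discretization framework, split into a no-concentration case and a concentration case, and verify (\rom{4}) by tracking the Caccioppoli sets through the construction so the isoperimetric choices telescope to $[[M]]$. Your treatment of (\rom{4}) is essentially correct and close to what the paper does (see Lemma~\ref{lemma on isoperimetric choice} and the computation of $F_A$ in Lemma~\ref{case 1} and the end of Step~4 in Lemma~\ref{case 2}).

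The gap is in your Case~2. You propose to interpolate between $\Phi(x_1)$ and $\Phi(x_2)$ by decomposing the symmetric difference $\Om_{x_1}\triangle\Om_{x_2}$ into small pieces $P_j$ and setting $E_j = E_{j-1}\triangle P_j$. For this to give small $\M$-fineness you need each $P_j$ to have small \emph{perimeter}; for this to give mass control you need $\M(\partial[[E_j]])\leq \sup_y\M(\Phi(y))+o(1)$. You assert both can be achieved ``isoperimetrically'', but the isoperimetric inequality bounds perimeter \emph{below} by volume, which is the wrong direction. When the varifold limit of $\Phi$ near some $x_0$ carries a point mass at $q\in M$, the mass of $\partial[[\Om_x]]$ is genuinely concentrated near $q$, and there is no obvious way to decompose a small-volume symmetric difference into pieces whose individual perimeters are small while keeping the intermediate perimeters below the original maximum. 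You yourself flag this as ``the main obstacle'' in your last sentence, but the proposal contains no mechanism to overcome it.

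The paper's solution is different in kind. It does not decompose the symmetric difference. Instead, via a contradiction argument (Proposition~\ref{technical 1}), it reduces to the situation where a sequence $\phi_j(x)\to T$ in flat norm while $|\phi_j(x)|\to V(x)$ as varifolds with $\|V(x)\|(\{q\})>\de/5$. The key device (Lemma~\ref{case 2}) is a \emph{cone construction}: one finds $p_j\to q$, $r_j\to 0$ with $r_j\M\big[\partial(\phi_j(x)\lc B(p_j,r_j))\big]\to 0$ via slicing, then inside $B(p_j,r_j)$ one replaces $\phi_j(x)$ by the cone $E_\#[\de_0\ttimes E^{-1}_\#\partial(\phi_j(x)\lc B(p_j,r_j))]$, moving to it through finitely many concentric annuli (Steps~1--3) so that each step changes mass by at most $\de/5+O(\de_1)$. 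The number of annuli depends only on $m,l,\de,L$ (Fact~\ref{fact 2 of case 2}.4), which is what makes the construction uniform. After this, the concentration is gone and the Marques--Neves Case~1 argument (Lemma~\ref{case 1}) applies. This cone-over-annuli idea, adapted from Pitts \cite[\S3.5]{P81}, is the missing ingredient in your proposal.
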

\begin{remark}
The proof of properties $(\rom{1})(\rom{2})(\rom{3})$ is based on the proof of \cite[Theorem 13.1]{MN12} and \cite[\S 3.5, 3.7]{P81}. The idea to deal with the existence of mass concentration is motivated by \cite[\S 3.5, 3.7]{P81}. We actually simplify the discretization procedure in \cite[\S 3.5]{P81} for currents which can be represented by boundary of sets of finite perimeter using some new observations (c.f. Lemma \ref{case 2}). The proof of property $(\rom{4})$ is based on the ideas in \cite[Theorem 5.8]{Z12}.

Upon first perusal of this section, the reader might skip the following technical proof and move to \S \ref{proof of the main theorem}. 
\end{remark}

\subsection{Technical preliminaries.}

The following two technical results are parallel to \cite[Proposition 13.3, 13.5]{MN12}, while without assuming the no mass concentration condition.

The first result is parallel to \cite[13.3]{MN12}, and it says that given $T\in\Z_n(M^{n+1})$, and $l, m\in N$, there exists $k\in\N$, $k>l$, such that any $\phi$ which maps $I_0(m, l)_0$ into a small neighborhood of $T$ (with respect to the flat topology) can be extended to a map $\ti{\phi}$ which maps $I(m, k)_0$ into a slightly larger neighborhood of $T$ (with respect to the flat topology), such that the fineness and maximal mass of $\ti{\phi}$ are not much bigger than those of $\phi$. Compared to \cite[13.3]{MN12}, we do not require the no mass concentration condition, but we need to assume that the image of $\phi$ are represented by boundary of sets of finite perimeter. Also, the extension $\ti{\phi}$ will be mapped to a slightly large neighborhood. The idea to deal with the mass concentration traces back to \cite[3.5]{P81}. We will first deform $\phi$ to certain local cones around the mass concentration points (c.f. Lemma \ref{case 2}), and then apply similar extension process as \cite[13.3]{MN12}.

Fix an integer $n_0\in\N$.

\begin{proposition}\label{technical 1}
Given $\de, L>0$, $l, m\in\N$, $m\leq n_0+1$, and
$$T\in\Z_n(M)\cap \{S: \M(S)\leq 2L\}, \textrm{with}\ T=\partial[[\Om_T]],$$
$\Om_T\in\C(M)$, 
then there exist $0<\ep=\ep(l, m, T, \de, L)<\de$, and $k=k(l, m, T, \de, L)\in\N$, $k>l$, and a function $\rho=\rho_{(l, m, T, \de, L)}: \R^1_+\rightarrow \R^1_+$, with $\rho(s)\rightarrow 0$, as $s\rightarrow 0$, such that: for any $0<s<\ep$, and
\begin{equation}\label{phi to be extended}
\phi: I_0(m, l)_0\rightarrow \B^{\F}_s(T)\cap\{S: \M(S)\leq 2L\},\ \textrm{with } \phi(x)=\partial[[\Om_x]],
\end{equation}
$\Om_x\in\C(M)$, 
$x\in I_0(m, l)_0$, there exists
$$\ti{\phi}: I(m, k)_0\rightarrow \B^{\F}_{\rho(s)}(T),\ \textrm{with }\ti{\phi}(y)=\partial[[\Om_y]],$$
$\Om_y\in\C(M)$, 
$y\in I(m, k)_0$, and satisfying
\begin{itemize}
\setlength{\itemindent}{1em}
\addtolength{\itemsep}{-0.7em}
\item[$(\rom{1})$] $\f(\ti{\phi})\leq \de$ if $m=1$, and $\f(\ti{\phi})\leq m(\f(\phi)+\de)$ if $m>1$;
\item[$(\rom{2})$] $\ti{\phi}=\phi\circ \n(k, l)$ on $I_0(m, k)_0$;
\item[$(\rom{3})$] $$\sup_{x\in I(m, k)_0}\M\big(\ti{\phi}(x)\big)\leq \sup_{x\in I_0(m, l)_0}\M\big(\phi(x)\big)+\frac{\de}{n_0+1};$$
\item[$(\rom{4})$] If $m=1$, $\de<\nu_M$\footnote{$\nu_M$ is defined in Section \ref{Almgren isomorphism}.}, $\phi([0])=\partial[[\Om_0]]$, $\phi([1])=\partial[[\Om_1]]$, then
$$F_A(\ti{\phi})=[[\Om_1-\Om_0]]$$
where $F_A$ is the Almgren's isomorphism (\ref{Almgren isomorphism1}).
\end{itemize}
\end{proposition}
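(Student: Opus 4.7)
My plan is to adapt the Marques--Neves extension \cite[Prop.~13.3]{MN12} and drop their no-mass-concentration assumption by exploiting the finite-perimeter structure, then verify (IV) via a telescoping computation of Almgren's isomorphism. The key observation is that if $\partial[[\Omega_x]]=\phi(x)$ is $\mathcal{F}$-close to $\partial[[\Omega_T]]=T$ with $s<\nu_M$, then since $H_{n+1}(M;\mathbb{Z})\cong\mathbb{Z}$ the unique small-mass filling of $\phi(x)-T$ is $[[\Omega_x]]-[[\Omega_T]]$, giving $|\Omega_x\triangle\Omega_T|\le s$. This converts flat-closeness of currents into $L^1$-closeness of the defining sets, which is the input needed to replace Pitts's varifold-level surgery by a clean set-theoretic surgery.

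Concretely, fix $\eta\sim\delta/(n_0+1)$. Because $\mathbf{M}(T)<\infty$, only finitely many points $p_j$ carry $\|T\|$-concentration of size $\ge\eta$, and I cover them by balls $U=\bigcup_j B_{r_j}(p_j)$ whose generic radii satisfy $\|T\|(U)\le\eta$ and $\|T\|(\partial U)=0$. I invoke Lemma~\ref{case 2} to replace each interior interpolant $\tilde\phi_0(y)$ (obtained by the Marques--Neves isoperimetric construction) by the boundary of the surgically modified set $\tilde\Omega_y:=(\tilde\Omega^0_y\setminus U)\cup(\Omega_T\cap U)$; this changes the mass by at most $O(\eta)$ by a coarea/slicing bound at generic radii, and removes the concentration since the family is constant (namely $\Omega_T\cap U$) inside $U$. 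Near $I_0(m,k)_0$, a thin buffer layer of vertices transitions back to $\phi\circ\mathbf{n}(k,l)$ as required by (II), contributing another $O(\eta)$ to the fineness. With this preprocessed $\tilde\phi$, properties (I) and (III) follow by the telescoping estimates of \cite[\S 13]{MN12} together with the observation that outside $U$ the family $\tilde\phi$ is the standard interpolation and inside $U$ it is constant, so neighboring pairs $\tilde\phi(x),\tilde\phi(y)$ differ only on $M\setminus U$ with controlled mass; the factor $m$ for $m>1$ in (I) is the standard combinatorial cost of stepping through $m$ cell directions.

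The main obstacle will be ensuring that the surgery yields a family with small \emph{mass-norm} fineness between neighboring vertices, not merely flat-norm closeness. This is where the codimension-one boundary-of-set structure is indispensable: the symmetric difference $\tilde\Omega_x\triangle\tilde\Omega_y$ has small volume by the $L^1$-closeness observation from the first paragraph, so $\mathbf{M}([[\tilde\Omega_x]]-[[\tilde\Omega_y]])$ is small automatically, and the surgery at generic radii adds only a vanishing slice contribution along $\partial U$. For property (IV), take $m=1$ and $\delta<\nu_M$. For each $1$-cell $\alpha=[t_\alpha^1,t_\alpha^2]$, the isoperimetric choice $Q_\alpha$ with $\mathbf{M}(Q_\alpha)<\nu_M$ is forced to equal $[[\tilde\Omega_{t_\alpha^2}]]-[[\tilde\Omega_{t_\alpha^1}]]$ (any other filling would differ by a nonzero multiple of $[[M]]$, violating the mass bound once $\nu_M$ is chosen small relative to $\mathrm{vol}(M)$). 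Summing over all $1$-cells telescopes to $F_A(\tilde\phi)=[[\tilde\Omega_{[1]}]]-[[\tilde\Omega_{[0]}]]=[[\Omega_1]]-[[\Omega_0]]=[[\Omega_1-\Omega_0]]$, where the middle equality uses (II): the surgery does not touch the boundary data at $[0]$ and $[1]$, so $\tilde\Omega_{[0]}=\Omega_0$ and $\tilde\Omega_{[1]}=\Omega_1$.
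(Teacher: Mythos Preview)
Your proposal has a genuine gap at the place where the whole difficulty of the proposition lives: the ``thin buffer layer'' that is supposed to transition from your surgered map back to $\phi\circ\n(k,l)$ on $I_0(m,k)_0$.

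You correctly observe that flat-closeness of $\phi(x)$ to $T$ forces $|\Om_x\triangle\Om_T|\le s$, and hence that your surgered sets satisfy $|\ti\Om_x\triangle\ti\Om_y|$ small. But the quantity you need for $(\rom{1})$ is $\M\big(\partial[[\ti\Om_x]]-\partial[[\ti\Om_y]]\big)$, the mass of an $n$-current, \emph{not} $\M\big([[\ti\Om_x]]-[[\ti\Om_y]]\big)$, the $(n{+}1)$-dimensional volume of the symmetric difference. Small volume of a symmetric difference in no way bounds the perimeter of that symmetric difference. Concretely, the single surgery step from $\phi(x)=\partial[[\Om_x]]$ to $\partial\big[\!\big[(\Om_x\setminus U)\cup(\Om_T\cap U)\big]\!\big]$ differs by $\partial\big([[\Om_x-\Om_T]]\lc U\big)$, whose mass is of order $\|\phi(x)\|(U)+\|T\|(U)+\text{slice}$; if the varifold limit concentrates at a point of $U$, this is of order $\al=\de/5$ or larger, not $O(\eta)$. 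So your buffer cannot be a ``thin layer'': building a path with small $\M$-fineness from $\phi(x)$ to a de-concentrated version is exactly the nontrivial content, and you cannot get it for free from the finite-perimeter structure alone.

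There is also a conceptual slip: you locate the concentration in $\|T\|$, but $T$ is a fixed integer-rectifiable $n$-current and has $\|T\|(\{p\})=0$ for every $p$. The relevant concentration lives in the varifold limits $V(x)=\lim_j|\phi_j(x)|$ that arise in the contradiction argument; this is why the paper runs the proof by contradiction and passes to a subsequence before it can even speak of concentration points. In particular, the Marques--Neves interpolation $\ti\phi_0$ cannot be produced first and then surgered, because it is precisely when $V(x)$ has point masses that the ball-covering step underlying that interpolation fails.

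The paper's fix (its Lemma~\ref{case 2}) is a genuine multi-step construction: around each concentration point it pushes $\phi_j(x)$ to a local cone through a sequence of concentric radii $r>r_1>\cdots>r_\nu$, so that each step changes the current only in a thin annulus (mass $\le\de/5$ by pigeonhole) plus controlled cone and lateral pieces. The number $\nu$ of steps depends on $l,m,\de,L$ but not on $j$, which is what makes the fineness uniform. Only after this de-concentration does the paper apply the Marques--Neves style interpolation (its Lemma~\ref{case 1}). Your citation of Lemma~\ref{case 2} suggests you have seen this, but the mechanism you describe (set-theoretic replacement by $\Om_T\cap U$) is not what that lemma does and does not achieve small $\M$-fineness.

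Your treatment of $(\rom{4})$ via telescoping of isoperimetric choices is correct in spirit and matches the paper's argument (Lemmas~\ref{lemma on isoperimetric choice} and~\ref{lemma on isoperimetric choice 2}); that part would go through once the interpolation itself is built correctly.
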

\begin{remark}
(\rom{1}) controls the fineness of the extension $\ti{\phi}$; (\rom{2}) says that on the boundary vertices $I_0(m, k)_0$ of the cell complex $I(m, k)$, the extension $\ti{\phi}$ directly inherits from $\phi$; (\rom{3}) controls the increase of the mass; (\rom{4}) calculates the image of $\ti{\phi}$ under the Almgren's isomorphism when $m=1$.
\end{remark}

\begin{proof}
We use the contradiction argument. If the statement is not true, by Section \ref{reverse of technical 1}, there exists $k_0\in\N$ large enough, $\rho_0>0$, and a sequence of $\ep_k<1/k$, and
$$\phi_k: I_0(m, l)_0\rightarrow \B^{\F}_{\ep_k}(T)\cap \{S: \M(S)\leq 2L\},$$
$\phi_k(x)=\partial[[\Om^k_x]]$, $\Om^k_x\in\C(M)$, such that there is no extension $\ti{\phi}_k$ of $\phi_k$ from $I(m, k_0)$ to $\B^{\F}_{\rho_0}(T)$, i.e. $\ti{\phi}_k: I(m, k_0)_0\rightarrow \B^{\F}_{\rho_0}(T)$, satisfying all the above properties (\rom{1})(\rom{2})(\rom{3})(\rom{4}).

The next lemma is an analog to \cite[Lemma 13.4]{MN12} without assuming the no mass concentration condition, and uses some new ideas motivated from \cite[\S 3.5]{P81}. Proposition \ref{technical 1} will be proved using Lemma \ref{lemma 1 for technical 1}.
\begin{lemma}\label{lemma 1 for technical 1}
With $\phi_k, \ep_k$ as above, there exist $N=N(l, m, T, \de, L)\in\N$, $N>l$, and a subsequence $\{\phi_j\}$, and a sequence of positive numbers $\rho_j\rightarrow 0$, as $j\rightarrow \infty$, such that we can construct
$$\psi_j: I(1, N)_0\times I_0(m, l)_0\rightarrow \B^{\F}_{\rho_j}(T),$$
satisfying
\begin{itemize}
\setlength{\itemindent}{1em}
\addtolength{\itemsep}{-0.7em}
\item[$(0)$] $\psi_j(y, x)=\partial[[\Om^j_{y, x}]]$, $\Om^j_{y, x}\in\C(M)$, $(y, x)\in I(1, N)_0\times I_0(m, l)_0$;
\item[$(\rom{1})$] $\f(\psi_j)\leq \de$ if $m=1$, and $\f(\psi_j)\leq \f(\phi_j)+\de$ if $m>1$;
\item[$(\rom{2})$] $\psi_j([0], \cdot)=\phi_j$, $\psi_j([1], \cdot)=T$;
\item[$(\rom{3})$] $$\sup\{\M\big(\psi_j(y, x)\big), (y, x)\in I(1, N)_0\times I_0(m, l)_0\}\leq \sup_{x\in I_0(m, l)_0}\M\big(\phi_j(x)\big)+\frac{\de}{n_0+1};$$
\item[$(\rom{4})$] If $m=1$, $\de<\nu_M$, $\phi_j([0])=\partial[[\Om_{j, 0}]]$, $\phi_j([1])=\partial[[\Om_{j, 1}]]$, then
$$F_A(\psi_j|_{I(1, N)_0\times\{[0]\}})=[[\Om_T-\Om_{j, 0}]],\quad F_A(\psi_j|_{I(1, N)_0\times \{[1]\}})=[[\Om_T-\Om_{j, 1}]]\footnote{Here we identify $I(1, N)_0\times \{[0]\}$ and $(I(1, N))_0\times \{[1]\}$ with $I(1, N)_0$},$$
where $F_A$ is the Almgren's isomorphism (\ref{Almgren isomorphism1}).
\end{itemize}
\end{lemma}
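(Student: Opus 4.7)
\textbf{Proof plan for Lemma \ref{lemma 1 for technical 1}.} The key structural input to exploit is that both $\phi_j(x)=\partial[[\Om^j_x]]$ and $T=\partial[[\Om_T]]$ come from sets of finite perimeter, so the canonical isoperimetric filling of $\phi_j(x)-T$ is $[[\Om^j_x]]-[[\Om_T]]$, and its mass equals $\mH^{n+1}(\Om^j_x \triangle \Om_T)\leq \F(\phi_j(x)-T)<\ep_j\to 0$ uniformly in $x$. The plan is to interpolate at the level of sets: deform $\Om^j_x$ into $\Om_T$ cell-by-cell on a fixed cubulation of $M$, and let $\psi_j$ be the boundary current of the intermediate sets. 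This set-level viewpoint is what lets us dispense with the no-mass-concentration hypothesis: each incremental step is measured by the volume of a piece of $\Om^j_x \triangle \Om_T$, which vanishes automatically, rather than by local current densities near a concentration point.

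Concretely, fix $N=N(l,m,T,\de,L)$ large and, using Sard and the coarea formula, choose a cubulation $\{C_1,\dots,C_N\}$ of $M$ with small cell diameters such that $\M(T\lc \partial C_i)=0$ for every $i$ and, after passing to a subsequence in $j$ along which $\|\phi_j\|$ converges weakly to some Radon measure $\mu$, also $\mu(\partial C_i)=0$. For $i=0,\dots,N$ and each $x\in I_0(m,l)_0$, set
$$\Om^j_{x,i}:=\bigl(\Om^j_x\cap (C_1\cup\cdots\cup C_i)\bigr)\cup \bigl(\Om_T\cap (C_{i+1}\cup\cdots\cup C_N)\bigr)\in\C(M),$$
and define $\psi_j([i/N],x):=\partial[[\Om^j_{x,N-i}]]$, so that $\psi_j([0],x)=\phi_j(x)$ and $\psi_j([1],x)=T$; this fulfills $(0)$ and $(\rom{2})$. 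The $y$-increment $\psi_j([i/N],x)-\psi_j([(i-1)/N],x)$ is supported in $\overline{C_i}$, and its mass is controlled by $\M(\partial[[\Om^j_x]]\lc C_i)+\M(\partial[[\Om_T]]\lc C_i)$ plus slice terms on $\partial C_i$ which vanish by transversality; summing over $i$ and invoking the weak convergence, the $y$-direction fineness can be made below $\de$ once $j$ is large. Along the $x$-direction, the only varying piece is $\Om^j_x\cap (C_1\cup\cdots\cup C_{N-i})$, giving $\f(\psi_j)\leq \f(\phi_j)+\de$ when $m>1$; when $m=1$ the domain $I_0(1,l)_0=\{[0],[1]\}$ consists of just two vertices, whose product-complex distance is $\geq 3^N$, so the automatic bound $\M\leq 4L$ forces the $x$-contribution below $\de$ once $N$ is large, which is $(\rom{1})$. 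Property $(\rom{3})$ follows by splitting $\M(\psi_j([i/N],x))$ into its ``$\Om^j_x$-cells'' part (bounded by $\M(\phi_j(x))$) and its ``$\Om_T$-cells'' part (bounded by $\M(T)$), with the interface mass on $\partial(C_1\cup\cdots\cup C_{N-i})$ absorbed into $\de/(n_0+1)$ by the same transversality choice.

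Property $(\rom{4})$ is a direct telescoping calculation. For $m=1$, consecutive values of $\psi_j(\cdot,[0])$ differ only inside a single cell $C_i$, so the isoperimetric choice on the corresponding edge of $I(1,N)$ is exactly $\pm([[\Om^j_{0,N-i+1}]]-[[\Om^j_{0,N-i}]])$; summing over edges telescopes to $[[\Om_T]]-[[\Om_{j,0}]]=[[\Om_T-\Om_{j,0}]]$, and the identity at $x=[1]$ is identical. The main obstacle is the cubulation itself: it must simultaneously be fine enough that the per-cell mass of $T$ is negligible and have cell boundaries transverse to $T$ and, in the sense of vanishing $\mu$-measure, to the weak$^*$ limit of $\|\phi_j\|$. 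Extracting the subsequence to secure this weak limit, and then perturbing cell boundaries via a Sard argument, is precisely the step that replaces the no-mass-concentration hypothesis of \cite{MN12}; this should be the analog of the local cone-off construction suggested by the reference to Lemma \ref{case 2}.
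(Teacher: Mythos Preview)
Your cell-swap construction has a genuine gap in the fineness bound $(\rom{1})$, precisely at the mass-concentration points that the lemma is designed to handle. The $y$-increment between steps $i-1$ and $i$ is $\partial\big([[\Om_T\cap C_{N-i+1}]]-[[\Om^j_x\cap C_{N-i+1}]]\big)$, whose mass is bounded below (up to slice terms) by $\M\big(\phi_j(x)\lc C_{N-i+1}\big)-\M\big(T\lc C_{N-i+1}\big)$. Now pass to a subsequence so that $|\phi_j(x)|\to V(x)$ as varifolds. If $q$ is a point with $\|V(x)\|(\{q\})=\alpha>\de$, then for \emph{any} cell $C$ containing $q$, no matter how small its diameter, one has $\liminf_j\M\big(\phi_j(x)\lc C\big)\geq\alpha>\de$, so the corresponding $y$-increment cannot be pushed below $\de$. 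Neither refining the cubulation nor perturbing cell walls helps: the obstruction is an atom of the limit measure, and some cell always contains it. Your Sard/transversality argument controls the slice terms on $\partial C_i$ and ensures $\mu(\partial C_i)=0$, but it does nothing for the interior term $\M(\phi_j(x)\lc C_i)$; this is not the analog of the cone-off construction, it is the analog of the ball-covering in the paper's Case~1, which already assumes no concentration.

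The paper's proof is organized around exactly this dichotomy. In Case~1 (no point of $V(x)$ carries mass exceeding $\de/5$) one can cover $M$ by finitely many balls each carrying small limit mass, and interpolate via isoperimetric choices restricted to those balls; this is essentially your construction and it works there. In Case~2 one first performs a local surgery near each concentration point $q$: find $p_j\to q$ and $r_j\to 0$ with $r_j\,\M\big[\partial(\phi_j(x)\lc B(p_j,r_j))\big]\to 0$, then replace $\phi_j(x)\lc B(p_j,r_j)$ by the cone $E_\#\big(\de_0\ttimes E^{-1}_\#\partial(\phi_j(x)\lc B(p_j,r_j))\big)$, doing so in a finite number of controlled steps across concentric annuli to keep each increment below $\de$. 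This surgery kills the atoms of the varifold limit while staying in the class $\partial[[\C(M)]]$ and changing total mass by at most $\de/(n_0+1)$; only after that does one apply the Case~1 interpolation. Your mass bound $(\rom{3})$ is also stated too loosely (you need lower semicontinuity cell-by-cell, namely $\M(T\lc C)\leq\liminf_j\M(\phi_j(x)\lc C)$, which requires the transversality $\|V(x)\|(\partial C)=0$, not merely $\mu(\partial C)=0$), and the $m=1$ distance claim is off (the $x$-distance between $[0]$ and $[1]$ in $I_0(1,l)_0$ is $3^l$, not $3^N$), but these are secondary to the concentration issue.
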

\begin{proof}
As a subset in $\V_n(M)$ with uniformly bounded mass is weakly compact, we can find a subsequence $\{\phi_j\}$ of $\{\phi_k\}$, and a map
$$V: I_0(m, l)_0\rightarrow \V_n(M),$$
such that $\lim_{j\rightarrow\infty}|\phi_j(x)|=V(x)$ as varifolds, $\|V(x)\|(M)\leq 2L$, for all $x\in I_0(m, l)_0$. Also as $\ep_j\rightarrow 0$, $\lim_{j\rightarrow\infty}\phi_j(x)=T$ as currents. 

Now we need to separate our discussion into two cases:
\begin{itemize}
\setlength{\itemindent}{3em}
\addtolength{\itemsep}{-0.7em}
\item[{\bf Case 1:}] $\|V(x)\|(p)\leq \de/5$, for all $p\in M$, $x\in I_0(m, l)_0$;
\item[{\bf Case 2:}] The set $S_{con}\footnote{The notion $S_{con}$ means ``set of mass concentration points".}=\{q\in M:\ \|V(x)\|(q)> \de/5 \textrm{ for some }x\in I_0(m, l)_0\}\neq \emptyset$.
\end{itemize}

\begin{lemma}\label{case 1}
In {\bf Case 1}, there exist $N_1=N_1(l, m, T, \de, L)\in\N$, and
$$\psi_j: I(1, N_1)_0\times I_0(m, l)_0\rightarrow \B^{\F}_{\ep_j}(T),$$
satisfying properties $(0)(\rom{1})(\rom{2})(\rom{3})(\rom{4})$ in Lemma \ref{lemma 1 for technical 1}.
\end{lemma}
\begin{remark}
The proof is a straightforward adaption of \cite[3.7]{P81}\cite[Lemma 13.4]{MN12}\cite[Theorem 5.8]{Z12}, so we omit some identical details.
\end{remark}
\begin{proof}
By the lower semi-continuity of weak convergence $\lim_{j\rightarrow\infty}\phi_j(x)\rightarrow T$,
$$\|T\|\big(B_r(p)\big)\leq \|V(x)\|\big(B_r(p)\big),\quad \forall p\in M, r>0.$$
As $\|V(x)\|(\{p\})\leq \de/5$ for all $x\in I_0(m, l)_0$, $p\in M$, we can find a finite collection of pairwise disjoint open balls $\{B_{r_i}(p_i): 1\leq i\leq v\}$, $p_i\in M$, $r_i>0$, $v\in\N$, such that for all $x\in I_0(m, l)_0$,

\begin{fact}\label{fact 1 of case 1}
\begin{enumerate}
\setlength{\itemindent}{1em}
\addtolength{\itemsep}{-0.7em}
\item $\|T\|\big(B_{r_i}(p_i)\big)\leq \|V(x)\|\big(B_{r_i}(p_i)\big)< \de/3$;

\item  $\|T\|\big(M\backslash\cup_{i=1}^v B_{r_i}(p_i)\big)\leq \|V(x)\|\big(M\backslash\cup_{i=1}^v B_{r_i}(p_i)\big)< \de/3$;

\item $\|T\|\big(\partial B_{r_i}(p_i)\big)=\|V(x)\|\big(\partial B_{r_i}(p_i)\big)=0$;

\item $v$ depends only on $l, m, T, \de, L$ by compactness of varifolds with bounded mass.
\end{enumerate}
\end{fact}

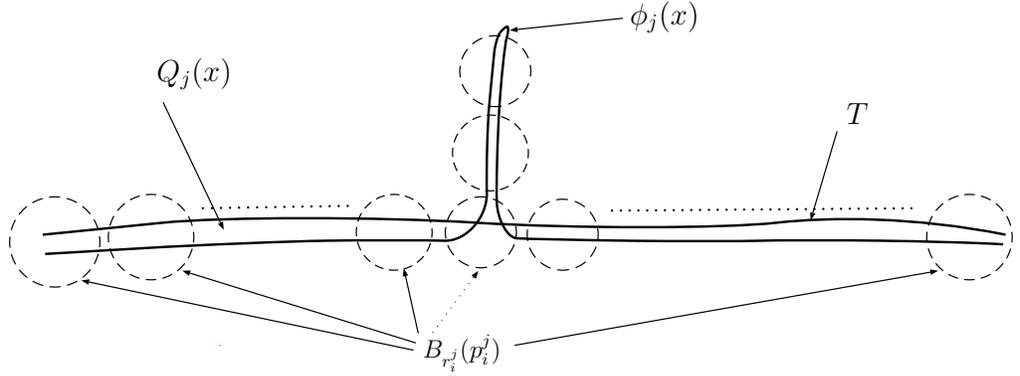
\begin{figure}[t]
\begin{center}
\psscalebox{0.8 0.8} 
{
\begin{pspicture}(0,-4.7081466)(19.72,4.7081466)
\definecolor{colour0}{rgb}{1.0,0.2,0.2}

\pscustom[linecolor=black, linewidth=0.04]
{
\newpath
\moveto(3.6,-1.2518533)
\lineto(5.6,-1.0518533)
\curveto(6.6,-0.95185333)(9.2,-0.95185333)(10.8,-1.0518533)
\curveto(12.4,-1.1518533)(14.9,-1.1518533)(15.8,-1.0518533)
\curveto(16.7,-0.95185333)(18.1,-0.95185333)(19.6,-1.2518533)
}
\pscustom[linecolor=colour0, linewidth=0.04]
{
\newpath
\moveto(3.6,-1.6918533)
}

\pscustom[linecolor=colour0, linewidth=0.04]
{
\newpath
\moveto(8.4,-1.6918533)
}
\pscustom[linecolor=colour0, linewidth=0.04]
{
\newpath
\moveto(3.6,-1.6918533)
}
\pscustom[linecolor=colour0, linewidth=0.04]
{
\newpath
\moveto(3.6,-1.5318533)
}
\pscustom[linecolor=black, linewidth=0.04]
{
\newpath
\moveto(3.64,-1.5718533)
\lineto(5.3353767,-1.4718534)
\curveto(6.1830654,-1.4218533)(7.6613507,-1.3618534)(8.291948,-1.3518534)
\curveto(8.922545,-1.3418534)(9.914961,-1.3418534)(10.276778,-1.3518534)
\curveto(10.638597,-1.3618534)(10.990078,-0.9618533)(10.97974,-0.55185336)
\curveto(10.969402,-0.14185333)(10.990078,0.5981467)(11.021091,0.92814666)
\curveto(11.052105,1.2581466)(11.093454,1.6881467)(11.103791,1.7881466)
\curveto(11.11413,1.8881466)(11.155481,2.0381467)(11.186494,2.0881467)
\curveto(11.217507,2.1381466)(11.279532,2.1981466)(11.310546,2.2081466)
\curveto(11.341558,2.2181466)(11.341558,2.1281466)(11.310546,2.0281467)
\curveto(11.279532,1.9281467)(11.217507,1.3381467)(11.186494,0.8481467)
\curveto(11.155481,0.35814667)(11.134805,-0.42185333)(11.145143,-0.7118533)
\curveto(11.155481,-1.0018533)(11.320883,-1.3018533)(11.475947,-1.3118533)
\curveto(11.631013,-1.3218533)(12.561402,-1.3418534)(13.336727,-1.3518534)
\curveto(14.112052,-1.3618534)(15.207844,-1.3618534)(15.528312,-1.3518534)
\curveto(15.84878,-1.3418534)(16.851534,-1.3418534)(17.53382,-1.3518534)
\curveto(18.216103,-1.3618534)(19.063791,-1.3818533)(19.56,-1.4118533)
}
\pscircle[linecolor=black, linewidth=0.02, linestyle=dashed, dash=0.17638889cm 0.10583334cm, dimen=outer](3.8,-1.3718534){0.76}
\pscircle[linecolor=black, linewidth=0.02, linestyle=dashed, dash=0.17638889cm 0.10583334cm, dimen=outer](5.4,-1.2918533){0.72}
\psline[linecolor=black, linewidth=0.04, linestyle=dotted, dotsep=0.10583334cm](6.24,-0.81185335)(8.68,-0.7718533)
\pscircle[linecolor=black, linewidth=0.02, linestyle=dashed, dash=0.17638889cm 0.10583334cm, dimen=outer](10.88,-1.2118534){0.6}
\pscircle[linecolor=black, linewidth=0.02, linestyle=dashed, dash=0.17638889cm 0.10583334cm, dimen=outer](9.44,-1.2118534){0.64}
\pscircle[linecolor=black, linewidth=0.02, linestyle=dashed, dash=0.17638889cm 0.10583334cm, dimen=outer](11.04,0.10814667){0.64}
\pscircle[linecolor=black, linewidth=0.02, linestyle=dashed, dash=0.17638889cm 0.10583334cm, dimen=outer](11.12,1.4681467){0.6}
\pscircle[linecolor=black, linewidth=0.02, linestyle=dashed, dash=0.17638889cm 0.10583334cm, dimen=outer](12.24,-1.2518533){0.6}
\pscircle[linecolor=black, linewidth=0.02, linestyle=dashed, dash=0.17638889cm 0.10583334cm, dimen=outer](19.0,-1.2918533){0.72}
\psline[linecolor=black, linewidth=0.02, arrowsize=0.05291666666666668cm 2.0,arrowlength=1.4,arrowinset=0.0]{<-}(11.36,2.1481466)(13.24,2.3481467)
\rput[bl](13.36,2.1081467){\Large $\phi_j(x)$}
\psline[linecolor=black, linewidth=0.02, arrowsize=0.05291666666666668cm 2.0,arrowlength=1.4,arrowinset=0.0]{<-}(16.36,-1.0118533)(16.8,0.46814668)
\rput[bl](16.96,0.5881467){\Large $T$}
\psline[linecolor=black, linewidth=0.02, arrowsize=0.05291666666666668cm 2.0,arrowlength=1.4,arrowinset=0.0]{<-}(4.24,-2.0118532)(9.76,-3.1718533)
\psline[linecolor=black, linewidth=0.02, arrowsize=0.05291666666666668cm 2.0,arrowlength=1.4,arrowinset=0.0]{<-}(5.92,-1.8518534)(9.8,-3.0518534)
\psline[linecolor=black, linewidth=0.02, arrowsize=0.05291666666666668cm 2.0,arrowlength=1.4,arrowinset=0.0]{<-}(9.6,-1.8118533)(9.84,-2.8918533)
\psline[linecolor=black, linewidth=0.02, linestyle=dotted, dotsep=0.10583334cm, arrowsize=0.05291666666666668cm 2.0,arrowlength=1.4,arrowinset=0.0]{<-}(10.84,-1.8518534)(10.04,-2.9318533)(10.04,-2.9718533)
\rput[bl](9.92,-3.5318534){\large $B_{r_i^j}(p_i^j)$}
\psline[linecolor=black, linewidth=0.02, arrowsize=0.05291666666666668cm 2.0,arrowlength=1.4,arrowinset=0.0]{<-}(18.48,-1.8518534)(11.44,-3.0918534)(11.44,-3.0918534)
\psline[linecolor=black, linewidth=0.04, linestyle=dotted, dotsep=0.10583334cm](13.04,-0.8518533)(18.04,-0.81185335)
\pscustom[linecolor=colour0, linewidth=0.04, linestyle=dotted, dotsep=0.10583334cm]
{
\newpath
\moveto(3.6,-1.2918533)
}
\pscustom[linecolor=colour0, linewidth=0.04, linestyle=dotted, dotsep=0.10583334cm]
{
\newpath
\moveto(3.6,-1.2518533)
}
\pscustom[linecolor=colour0, linewidth=0.04, linestyle=dotted, dotsep=0.10583334cm]
{
\newpath
\moveto(3.4,1.2281467)
}
\pscustom[linecolor=colour0, linewidth=0.04]
{
\newpath
\moveto(2.32,1.6681466)
}
\pscustom[linecolor=colour0, linewidth=0.04]
{
\newpath
\moveto(3.6,-1.2518533)
}
\pscustom[linecolor=colour0, linewidth=0.04]
{
\newpath
\moveto(3.6,-1.2518533)
}
\psarc[linecolor=colour0, linewidth=0.04, dimen=outer](13.321739,-4.648375){0.04}{-110.0}{-95.0}
\psarc[linecolor=colour0, linewidth=0.04, dimen=outer](3.7565217,-1.3788098){0.76521736}{131.69592}{132.105}
\psarc[linecolor=colour0, linewidth=0.04, dimen=outer](6.504348,-3.0831578){0.04}{-20.0}{-5.0}
\psline[linecolor=black, linewidth=0.02, arrowsize=0.05291666666666668cm 2.0,arrowlength=1.4,arrowinset=0.0]{<-}(6.6086955,-1.1701142)(5.6,0.95162493)
\rput[bl](5.495652,1.1603206){\Large $Q_j(x)$}
\end{pspicture}
}
\end{center}
\caption{\label{discretization without point mass}\small This figure illustrates the geometric objects using in Lemma \ref{case 1}.}
\end{figure}

By \cite[Corollary 1.14]{AF62}, for $j\gg 1$, $x\in I_0(m, l)_0$, there exists isoperimetric choices $Q_j(x)\in\bI_{n+1}(M^{n+1})$, such that
\begin{equation}\label{isoperimetric choice case 1}
\partial Q_j(x)=\phi_j(x)-T,\quad \M\big(Q_j(x)\big)=\F\big(\phi_j(x)-T\big)\leq \ep_j<1/j.
\end{equation}
For each $i=1,\cdots, v$, let $d_i(x)=dist(p_i, x)$ be the distance function to $p_i$ on $(M, g)$. Using the Slicing Theorem \cite[28.5]{Si83}, for each $i=1, \cdots, v$, we can find a sequence of positive numbers $\{r^j_i\}$, such that $r^j_i\searrow r_i$, such that for all $x\in I_0(m, l)_0$, the slices $\lan Q_j(x), d_i, r^j_i\ran\in\bI_n(M)$, and
\begin{equation}\label{slices case 1}
\lan Q_j(x), d_i, r^j_i\ran =\partial \big(Q_j(x)\lc B_{r^j_i}(p_i)\big)-\big(\phi_j(x)-T\big)\lc B_{r^j_i}(p_i).
\end{equation}
Also as $\lim_{j\rightarrow\infty}\M\big(Q_j(x)\big)=0$, by \cite[28.5(1)]{Si83}, we can choose $\{r^j_i\}$ so that for $j$ large enough,
\begin{equation}\label{mass of slices case 1}
\sum_{x\in I_0(m, l)_0}\sum_{i=1}^v \M\big(\lan Q_j(x), d_i, r^j_i\ran\big)\leq \frac{\de}{2(n_0+1)}.
\end{equation}
Using Fact \ref{fact 1 of case 1} and the lower semi-continuity of mass functional,  for $j$ large enough,
\begin{equation}\label{local small mass case 1}
\|\phi_j(x)\|\big(B_{r^j_i}(p_i)\big)<\de/3,\quad\quad \|T\|\big(B_{r^j_i}(p_i)\big)<\de/3;
\end{equation}
\begin{equation}\label{marginal small mass case 1}
\|\phi_j(x)\|\big(M\backslash\cup_{i=1}^v B_{r^j_i}(p_i)\big)<\de/3, \quad \|T\|\big(M\backslash\cup_{i=1}^v B_{r^j_i}(p_i)\big)<\de/3;
\end{equation}
\begin{equation}\label{small mass gap case 1}
\big(\|T\|-\|\phi^j_i(x)\|\big)\big(B_{r^j_i}(p_i)\big)\leq \frac{\de}{2(n_0+1)v},
\end{equation}
for all $i=1, \cdots, v$, and $x\in I_0(m, l)_0$.

Let $v+1=3^{N_1}$, $N_1\in\N$, then $N_1$ depends only on $l, m, T, \de, L$. Define $\psi_j: I(1, N_1)_0\times I_0(m, l)_0\rightarrow \Z_n(M^{n+1})$ by,
\begin{equation}\label{construction of psi_j case 1}
\begin{split}
\psi_j([\frac{i}{3^{N_1}}], x)= &\phi_j(x)-\sum_{a=1}^i\partial\big(Q_j(x)\lc B_{r^j_a}(p_a)\big), \textrm{ for } 0\leq i\leq 3^{N_1}-1,\\
                                             &\psi_j([1], x)=T.
\end{split}
\end{equation}
Similar arguments as in the proof of \cite[Lemma 13.4]{MN12} using (\ref{slices case 1})(\ref{mass of slices case 1})(\ref{local small mass case 1})(\ref{marginal small mass case 1})(\ref{small mass gap case 1}) in place of \cite[(67)(68)(69)(70)(71)]{MN12} show that $\psi_j([\frac{i}{3^{N_1}}], x)\in\B^{\F}_{\ep_j}(T)$ for all $1\leq i\leq 3^{N_1}$, $x\in I_0(m, l)_0$, and that $\{\psi_j\}$ satisfy properties (\rom{1})(\rom{2})(\rom{3}) in Lemma \ref{lemma 1 for technical 1}.

Noe let us check property (0) in Lemma \ref{lemma 1 for technical 1}. We assume that $T\neq 0$ (the case $T=0$ is easier). Denote $\phi_j(x)=\partial[[\Om_j(x)]]$, $\Om_j(x)\in\C(M)$, then by Lemma \ref{lemma on isoperimetric choice 2}, for $j$ large enough, the isoperimetric choices $Q_j(x)$ in (\ref{isoperimetric choice case 1}) satisfy that:
$$Q_j(x)=[[\Om_j(x)-\Om_T]],\quad \textrm{for all } x\in I_0(m, l)_0.$$
Hence by (\ref{construction of psi_j case 1}), for $0\leq i\leq 3^{N_1}-1$,
\begin{displaymath}
\begin{split}
\psi_j([\frac{i}{3^{N_1}}], x) & =\partial[[\Om_j(x)]]-\sum_{a=1}^i\partial\big([[\Om_j(x)-\Om_T]]\lc B_{r^j_a}(p_a)\big)\\
                                           & =\partial\Big\{[[\Om_j(x)\lc\big(M\backslash\cup_{a=1}^i B_{r^j_a}(p_a)\big)]]+[[\Om_T\lc \big(\cup_{a=1}^i B_{r^j_a}(p_a)\big)]]\Big\}.
\end{split}
\end{displaymath}
This proves Lemma \ref{lemma 1 for technical 1}(0) as $\Om_j(x)\lc\big(M\backslash\cup_{a=1}^i B_{r^j_a}(p_a)\big)+\Om_T\lc \big(\cup_{a=1}^i B_{r^j_a}(p_a)\big)\in\C(M)$.

Finally let us check property (\rom{4}) in Lemma \ref{lemma 1 for technical 1}. Assume that $m=1$, and $\ep_j<\nu_M$. Let us calculate $F_A(\psi_j|_{I(1, N_1)_0\times\{[0]\}})$ and $F_A(\psi_j|_{I(1, N_1)_0\times\{[1]\}})$. First we do $F_A(\psi_j|_{I(1, N_1)_0\times\{[0]\}})$. By the definition of Almgren's isomorphism (\ref{Almgren isomorphism1}),
$$F_A(\psi_j|_{I(1, N_1)_0\times\{[0]\}})=\sum_{i=1}^{v+1}Q_{j, i}(0),$$
where $Q_{j, i}(0)$ is the isoperimetric choice of $\psi_j([\frac{i}{3^{N_1}}], [0])-\psi_j([\frac{i-1}{3^{N_1}}], [0])$, $i=1, \cdots, v$, and $Q_{j, v+1}(0)$ is the isoperimetric choice of $T-\psi_j([\frac{v}{3^{N_1}}], [0])$. By (\ref{construction of psi_j case 1}),
$$\psi_j([\frac{i}{3^{N_1}}], [0])-\psi_j([\frac{i-1}{3^{N_1}}], [0])=-\partial\big(Q_j(x)\lc B_{r^j_i}(p_i)\big),$$
and hence by Lemma \ref{lemma on isoperimetric choice}, $Q_{j, i}(0)=-Q_j(x)\lc B_{r^j_i}(p_i)=[[\Om_T-\Om_j(0)]]\lc B_{r^j_i}(p_i)$. Similarly,
$$T-\psi_j([\frac{v}{3^{N_1}}], [0])=-\partial\big(Q_j(x)\lc [M\backslash\cup_{i=1}^vB_{r^j_i}(p_i)]\big),$$
and hence by Lemma \ref{lemma on isoperimetric choice}, $Q_{j, v+1}(0)=-Q_j(x)\lc  [M\backslash\cup_{i=1}^vB_{r^j_i}(p_i)]=[[\Om_T-\Om_j(0)]]\lc  [M\backslash\cup_{i=1}^vB_{r^j_i}(p_i)]$. Summing them together,
\begin{displaymath}
\begin{split}
F_A(\psi_j|_{I(1, N_1)_0\times\{[0]\}}) &=\sum_{i=1}^{v}[[\Om_T-\Om_j(0)]]\lc B_{r^j_i}(p_i)+ [[\Om_T-\Om_j(0)]]\lc  [M\backslash\cup_{i=1}^vB_{r^j_i}(p_i)]\\
                                                            &=[[\Om_T-\Om_j(0)]].
\end{split}
\end{displaymath}
Similar arguments show that $F_A(\psi_j|_{I(1, N_1)_0\times\{[1]\}})=[[\Om_T-\Om_j(1)]]$, and hence property (\rom{4}) (in Lemma \ref{lemma 1 for technical 1}) is proved.
\end{proof}

\begin{lemma}\label{case 2}
In {\bf Case 2}, there exist $N_2=N_2(l, m, \de, L)\in\N$, and a subsequence (still denoted by) $\{\phi_j\}$, and a sequence of positive numbers $\rho_j\rightarrow 0$, as $j\rightarrow \infty$, and
$$\psi_j: I(1, N_2)_0\times I_0(m, l)_0\rightarrow \B^{\F}_{\rho_j}(T),$$
satisfying:
\begin{itemize}
\setlength{\itemindent}{1em}
\addtolength{\itemsep}{-0.7em}
\item[$(0)$] $\psi_j(y, x)=\partial[[\Om^j_{y, x}]]$, $\Om^j_{y, x}\in\C(M)$, $(y, x)\in I(1, N_2)_0\times I_0(m, l)_0$;
\item[$(\rom{1})$] $\f(\psi_j)\leq \de$ if $m=1$, and $\f(\psi_j)\leq \f(\phi_j)+\de$ if $m>1$;
\item[$(\rom{2})$] $\psi_j([0], \cdot)=\phi_j$,
$$\lim_{j\rightarrow\infty}|\psi_j([1], x)|=V(x)\lc G_n\big(M\backslash S_{con}\big)\footnote{$G_{n}(U)$, $U\subset M$ denotes the $n$-Grassmannian bundle over $U$ \cite[\S 38]{Si83}.}\ \textrm{as varifolds for all $x\in I_0(m, l)_0$};$$
\item[$(\rom{3})$] $$\sup\{\M\big(\psi_j(y, x)\big), (y, x)\in I(1, N_2)_0\times I_0(m, l)_0\}\leq \sup_{x\in I_0(m, l)_0}\M\big(\phi_j(x)\big)+\frac{\de}{n_0+1};$$
\item[$(\rom{4})$] If $m=1$, $\de<\nu_M$, $\phi_j([0])=\partial[[\Om_{j, 0}]]$, $\phi_j([1])=\partial[[\Om_{j, 1}]]$, $\psi_j([1]\otimes [0])=\partial[[\Om^{\pr}_{j, 0}]]$, $\psi_j([1]\otimes [1])=\partial[[\Om^{\pr}_{j, 1}]]$\footnote{We introduce new notions $\Om^{\pr}_{j, 0}, \Om^{\pr}_{j, 1}$ to simplify presentation, and according to $(0)$, $\Om_{j, 0}^{\pr}=\Om_{[1], [0]}^j$ and $\Om_{j, 1}^{\pr}=\Om_{[1], [1]}^j$.}, then
$$F_A(\psi_j|_{I(1, N_2)_0\times\{[0]\}})=[[\Om^{\pr}_{j, 0}-\Om_{j, 0}]],\quad F_A(\psi_j|_{I(1, N_2)_0\times \{[1]\}})=[[\Om^{\pr}_{j, 1}-\Om_{j, 1}]]\footnote{Here we identify $I(1, N_2)_0\times \{[0]\}$ and $(I(1, N_2))_0\times \{[1]\}$ with $I(1, N_2)_0$.},$$
where $F_A$ is the Almgren's isomorphism (\ref{Almgren isomorphism1}).
\end{itemize}
\end{lemma}
\begin{remark}
This lemma is the key part towards Theorem \ref{discretization and identification}. As the proof is very subtle, we sketch the main ideas here. Let us focus on a simpler case when $S_{con}$ contains only one point $q$ (Part \Rom{1} in the proof), and the general case (Part \Rom{2}) follows from straightforward induction. For $j$ large, we will find points $p_j\rightarrow q$ and radii $r_j\rightarrow 0$, such that the mass of the slicing $\M\big[\partial\big(\phi_j(x)\lc B(p_j, r_j)\big)\big]\rightarrow 0$ (Fact \ref{smallness of boundary and radius}). To get rid of the mass concentration, we will connect $\phi_j(x)$ to local cones $0\ttimes \partial\big(\phi_j(x)\lc B(p_j, r_j)\big)$ inside $B(p_j, r_j)$ in finitely many steps simultaneously for all $x\in I_0(m, l)_0$. To keep the fineness small during this procedure, we will find finitely many concentric annuli inside $B(p_j, r_j)$ (Fact \ref{fact 2 of case 2}), and do the deformation step by step on each annulus (Step 1 to 3). The number of annuli can be chosen to depend only on $l, m, \de, L$ (Fact \ref{fact 2 of case 2}.4). All the properties (0)(\rom{1})(\rom{2})(\rom{3})(\rom{4}) are checked in Step 4 and 5.

As we are working on a manifold, so all the cone construction should be passed to the tangent plane using exponential map. We summarize the related formulae for local exponential maps in \S \ref{basic facts of exponential map}.
\end{remark}
\begin{proof}
For all basics facts about the local exponential map, we refer to \S \ref{basic facts of exponential map}.

$C(m, l)$ denotes the number of vertices in $I_0(m, l)_0$.

Denote $\al=\de/5$, then the set $S_{con}$ has at most $C(m, l)\frac{2L}{\al}$ points. Given $q\in S_{con}$, then $\|V(x)\|(q)>\al$, for some $x\in I_0(m, l)_0$. Choose a neighborhood $Z=Z_q$ of $q$ satisfying the requirement of \S \ref{basic facts of exponential map}, with respect to some fixed $\ep\leq n/2$. We can make sure that the sets $\{Z_q:\ q\in S_{con}\}$ are pairwise disjoint by possibly shrinking $Z_q$.

\vspace{1em}

\noindent{\bf\large Part \Rom{1}:}
First assume that $S_{con}$ has a single point, i.e. $S_{con}=\{q\}$, and write $Z=Z_q$. We will discuss the general cases using induction method later.

We need the following facts.
\begin{itemize}
\vspace{-5pt}
\setlength{\itemindent}{0.2em}
\addtolength{\itemsep}{-0.7em}
\item[(A)] By basic measure theory,
$$\lim_{r\rightarrow 0}\|V(x)\|\big(B(q, r)\backslash\{q\}\big)=0,\ \forall x\in I_0(m, l)_0.$$

\item[(B)] Given a set of integral currents $\{T(x)\in \Z_n(M^{n+1}):\ x\in I_0(m, l)_0\}$, by \cite[3.6]{P81}, the set
$$\{p\in Z:\ \|T(x)\|\big(\partial B(p, t)\big)=0, \forall t>0, B(p, t)\subset Z\}$$
has a full measure in $Z$;


\item[(C)] Fix $p\in Z$, and $s>0$, with $B(p, 2s)\subset Z$. Then by the slicing theorem \cite[28.5]{Si83} and \S \ref{basic facts of exponential map}(d), $\partial(T(x)\lc B(p, t))\in\Z_{n-1}(M)$ for ($L^1$ almost all) $t\in[s/2, 2s]$, and
\vspace{-5pt}
$$2\|T(x)\|\big(A(p, s/2, 2s)\big)\geq Lip (r_p)\|T(x)\|\big(A(p, s/2, 2s)\big)\geq \int_{s/2}^{2s}\M\big[\partial (T(x)\lc B(p, t))\big]dt.$$
Hence by the Pigeonhole Principle, there exists $r\in [s/2, 2s]$, such that for all $x\in I_0(m, l)_0$,
\begin{itemize}
\vspace{-5pt}
\addtolength{\itemsep}{-0.3em}
\item $\partial \big(T(x)\lc B(p, r)\big)=\lan T(x), r_p, r\ran\in \Z_{n-1}(M^{n+1})$\footnote{$\lan T, r_p, r\ran$ denotes the slicing of $T$ by the function $r_p$ (see \S \ref{basic facts of exponential map}) at $r$ \cite[28.4]{Si83}.};

\item $2 C(m, l)\|T(x)\|\big(A(p, s/2, 2s)\big)\geq \frac{3}{2} s\M\big[\partial(T(x)\lc B(p, r))\big]\geq \frac{3}{4}r \M\big[\partial(T(x)\lc B(p, r))\big].$
\end{itemize}
\end{itemize}

\vspace{5pt}
Now denote $T_j(x)=\phi_j(x)$, $x\in I_0(m, l)_0$,

\begin{claim}\label{claim 1 for case 2}
We can find (possibly up to a further subsequence of $\{\phi_j\}$),
\begin{itemize}
\vspace{-5pt}
\setlength{\itemindent}{1em}
\addtolength{\itemsep}{-0.7em}
\item a sequence of points $p_j\in Z$, $p_j\rightarrow q$ as $j\rightarrow \infty$;

\item sequences of numbers $s_j, r_j\in\R$, with $0<s_j/2<r_j<2s_j$, $\lim_{j\rightarrow\infty}s_j=0$;
\vspace{-5pt}
\end{itemize}

satisfying
\begin{itemize}
\vspace{-5pt}
\setlength{\itemindent}{1em}
\addtolength{\itemsep}{-0.7em}

\item[$(\rom{1})$] $B(q, s_j/8)\subset B(p_j, s_j/4)\subset B(p_j, 2s_j)\subset B(q, 4s_j)$;

\item[$(\rom{2})$] $\|T_j(x)\|\big(\partial B(p_j, t)\big)=0$, for all $x\in I_0(m, l)_0$, $0<t<2s_j$;

\item[$(\rom{3})$] $\lim_{j\rightarrow \infty}\max_{x\in I_0(m, l)_0}\|T_j(x)\|\big[A(p_j, s_j/2, 2 s_j)\big]=0$;

\item[$(\rom{4})$] $\partial\big(T_j(x)\lc B(p_j, r_j)\big)=\lan T_j(x), r_{p_j}, r_j \ran\in \Z_{n-1}(M)$;

\item[$(\rom{5})$] $r_j \M\big[\partial\big(T_j(x)\lc B(p_j, r_j)\big)\big]\leq 8/3 C(m, l)\|T_j(x)\|\big(A(p_j, s_j/2, 2 s_j)\big)$;

\item[$(\rom{6})$] $\lim_{j\rightarrow \infty}|T_j(x)|\lc G_n\big(B^c(p_j, r_j)\big)=V(x)\lc G_n(M\backslash\{q\})$ as varifolds\footnote{$B^c(p, r)$ denotes the complement of $B(p, r)$ in $M$.}.
\end{itemize}
\end{claim} 
Now let us check the claim. By fact (A), we can find $s_j>0$, $s_j\rightarrow 0$, as $j\rightarrow\infty$, such that
$$\lim_{j\rightarrow\infty}\max_{x\in I_0(m, l)_0}\|V(x)\|\big(B(q, 4s_j)\backslash\{q\}\big)=0.$$
As $|T_j(x)|=|\phi_j(x)|$ converge to $V(x)$ as varifolds, we can possibly take a subsequence of $\{\phi_j\}$, still denoted by $\{\phi_j\}$, such that
$$\lim_{j\rightarrow \infty}\max_{x\in I_0(m, l)_0} \|T_j(x)\|\big(A(q, s_j/8, 4s_j)\big)=0,\quad \textrm{and }$$
$$\lim_{j\rightarrow \infty}|T_j(x)|\lc G_n(B^c(q, s_j/8))=V(x)\lc G_n(M\backslash\{q\}), \textrm{ as varifolds, for all }x\in I_0(m, l)_0.$$
(In fact, for any $j$ one can find $j^{\pr}\geq j$, such that $\|T_{j^{\pr}}(x)\|\big(A(q, s_j/8, 4s_j)\big)\leq 2\|V(x)\|\big(B(q, 4s_j)\backslash\{q\}\big)$ and $\|T_{j^{\pr}}(x)\|B(q, s_j/8)\leq \|V(x)\|B(q, s_j/8)+\frac{1}{j}$, and $\{\phi_{j^{\pr}}(x)=T_{j^{\pr}}(x)\}$ satisfies the requirement).

By fact (B), we can find a sequence $p_j\in Z$, $p_j\rightarrow q$, such that $B(q, s_j/8)\subset B(p_j, s_j/4)\subset B(p_j, 2s_j)\subset B(q, 4s_j)$, and $\|T_j(x)\|\big(\partial B(p_j, s)\big)=0$, for all $x\in I_0(m, l)_0$ and $s>0$ with $B(p_j, s)\subset Z$. Hence (\rom{1})(\rom{2}) are true. (\rom{3}) is true as $A(p_j, s_j/2, 2s_j)\subset A(q, s_j/8, 4s_j)$.
Now for each $j$, by fact (C), we can find $r_j\in [s_j/2, 2s_j]$, such that (\rom{4})(\rom{5}) are true. (\rom{6}) is true as $B^c(p_j, r_j)\subset B^c(q, s_j/8)$ and $B(p_j, r_j)\backslash B(q, s_j/8)\subset A(q, s_j/8, 4s_j)$.

\vspace{5pt}
Then we have the following facts.
\begin{fact}\label{smallness of boundary and radius}
Given $\de_1>0$ (to be determined later), $\de_1<\de$, by Claim \ref{claim 1 for case 2}(\rom{3})(\rom{5}), there exists $J$ large enough, such that if $j\geq J$,
\begin{equation}\label{mass bound for slices}
\frac{2r_j}{n} \M\big[\partial\big(T_j(x)\lc B(p_j, r_j)\big)\big]\leq \de_1/5;
\end{equation}
\begin{equation}\label{volume bound for small balls}
vol\big(B(p_j, r_j)\big)\leq \de_1/5;
\end{equation}
\begin{equation}\label{mass bound for small balls}
vol\big(\partial B(p_j, r)\big)\leq \de_1/5, \ \textrm{for all $r\leq r_j$}.
\end{equation}
\end{fact}

\vspace{1em}
Now we are going to connect $T_j(x)\lc B(p_j, r_j)$ to the cones $E_{\#}\big[\de_0\ttimes E^{-1}_{\#}\partial\big(T_j(x)\lc B(p_j, r_j) \big)\big]$ using discrete sequences with controlled fineness simultaneously for all $x\in I_0(m, l)_0$.

We separate the whole procedure into several steps. For notions $E, \mu(\la), h(r)$, we refer to \S \ref{basic facts of exponential map}.

\vspace{0.5em}
\noindent \underline{\bf Step 0}: Now fix $j\geq J$, and forget the subscript $``j"$ now. So $T(x)$ and $B(p, r)$ satisfy (\ref{mass bound for slices})(\ref{volume bound for small balls})(\ref{mass bound for small balls}). Recall that $T(x)=\partial[[\Om(x)]]$, $\Om(x)\in \C(M)$. For simplicity, we will identify $\Om(x)$ with $[[\Om(x)]]$ in the following of the proof. By the Pigeonhole Principle and the Slicing Theorem \cite[28.5]{Si83}, we have that
\begin{fact}\label{fact 2 of case 2}
we can find finitely many numbers $r_i>0$\footnote{Note that $r_i$'s are different from the $r_j$'s in Claim \ref{claim 1 for case 2}, and we will forget the subscript $``j"$ of $r_j$ until Step 5.}, $i=1,\cdots, \nu$, for some $\nu\in\N$, with $r>r_1>r_2>\cdots>r_{\nu}>0$, such that for all $x\in I_0(m, l)_0$, $1\leq i\leq \nu-1$,
\begin{enumerate}
\vspace{-5pt}
\addtolength{\itemsep}{-0.7em}
\item $\|T(x)\|A(p, r_{i+1}, r_i)\leq \de/5$, $\|T(x)\|B(p, r_{\nu})\leq \de/5$;
\item $\partial \big(T(x)\lc B(p, r_i)\big)\in \Z_{n-1}(M^{n+1})$;
\item $\lan \Om(x), r_p, r_i\ran= \partial\big(\Om(x)\lc B(p, r_i)\big)-T(x)\lc B(p, r_i)\in \bI_n(M^{n+1})$;
\item $\nu$ can be any integer no less than $C(m, l)(\de/6)^{-1}\max_{x\in I_0(m, l)_0}\M(T(x)\lc Z)$, and hence depends only on $m, l, \de, L$.
\end{enumerate}
\end{fact}

\begin{figure}[t]
\begin{center}
\psscalebox{0.7 0.7} 
{
\begin{pspicture}(0,-5.6)(23.94,5.6)
\psdots[linecolor=black, dotsize=0.2](8.84,0.0)
\pscircle[linecolor=black, linewidth=0.04, dimen=outer](8.84,0.0){5.6}
\pscircle[linecolor=black, linewidth=0.04, dimen=outer](8.84,0.0){4.4}
\pscustom[linecolor=black, linewidth=0.04]
{
\newpath
\moveto(0.44,3.6)
\lineto(0.84,2.6)
\curveto(1.04,2.1)(1.44,1.3)(1.64,1.0)
\curveto(1.84,0.7)(2.24,0.1)(2.44,-0.2)
\curveto(2.64,-0.5)(3.14,-1.0)(3.44,-1.2)
\curveto(3.74,-1.4)(4.44,-1.8)(4.84,-2.0)
\curveto(5.24,-2.2)(6.24,-2.5)(8.04,-2.8)
}
\pscustom[linecolor=black, linewidth=0.04]
{
\newpath
\moveto(9.64,-2.8)
\lineto(11.44,-2.4)
\curveto(12.34,-2.2)(13.64,-1.8)(14.04,-1.6)
\curveto(14.44,-1.4)(15.14,-1.0)(15.44,-0.8)
\curveto(15.74,-0.6)(16.24,-0.2)(16.44,0.0)
\curveto(16.64,0.2)(17.04,0.6)(17.24,0.8)
\curveto(17.44,1.0)(17.84,1.4)(18.04,1.6)
\curveto(18.24,1.8)(18.44,2.2)(18.44,2.4)
}
\psline[linecolor=black, linewidth=0.04, linestyle=dashed, dash=0.17638889cm 0.10583334cm](3.4,-1.16)(8.84,0.0)(8.84,0.0)
\psline[linecolor=black, linewidth=0.04, linestyle=dashed, dash=0.17638889cm 0.10583334cm](8.84,0.0)(14.24,-1.5)(14.24,-1.4)
\psline[linecolor=black, linewidth=0.04](8.04,-2.8)(8.14,-2.7)(8.24,-3.0)(8.34,-2.6)(8.44,-2.9)(8.44,-3.1)(8.54,-3.0)(8.54,-2.8)(8.64,-2.6)(8.74,-3.0)(8.84,-2.8)(8.84,-3.1)(8.94,-2.6)(9.04,-2.8)(9.14,-3.1)(9.24,-2.8)(9.34,-2.6)(9.44,-2.8)(9.34,-3.1)(9.54,-3.2)(9.54,-3.0)(9.54,-2.8)(9.64,-2.8)(9.74,-2.8)
\psline[linecolor=red, linewidth=0.04](3.36,-1.16)(4.52,-0.92)(4.56,-0.92)
\psarc[linecolor=red, linewidth=0.04, dimen=outer](8.84,-0.04){4.36}{191.5}{208.0}
\psline[linecolor=red, linewidth=0.04](13.08,-1.16)(14.2,-1.48)(14.2,-1.48)
\psarc[linecolor=red, linewidth=0.04, dimen=outer](8.84,0.0){4.4}{332.0}{344.5}
\psline[linecolor=black, linewidth=0.02, arrowsize=0.05291666666666667cm 2.0,arrowlength=1.4,arrowinset=0.0]{<-}(1.12,1.8)(0.4,0.88)(0.4,0.88)
\rput[bl](0.0,0.64){\Large $T$}
\rput[bl](1.24,-2.76){\Large $S_1$}
\psline[linecolor=black, linewidth=0.02, arrowsize=0.05291666666666667cm 2.0,arrowlength=1.4,arrowinset=0.0]{<->}(8.84,0.04)(12.96,3.8)
\rput[bl](10.6,2.0){\Large $r$}
\psline[linecolor=black, linewidth=0.02, arrowsize=0.05291666666666667cm 2.0,arrowlength=1.4,arrowinset=0.0]{<->}(5.36,2.64)(8.84,0.04)
\rput[bl](6.92,1.8){\Large $r_1$}
\psline[linecolor=black, linewidth=0.02, arrowsize=0.05291666666666667cm 2.0,arrowlength=1.4,arrowinset=0.0]{<-}(4.04,-1.08)(1.72,-2.6)
\psline[linecolor=black, linewidth=0.02, arrowsize=0.05291666666666667cm 2.0,arrowlength=1.4,arrowinset=0.0]{<-}(13.48,-1.32)(15.16,-2.56)
\rput[bl](15.24,-2.72){\Large $S_1$}
\psline[linecolor=black, linewidth=0.02, arrowsize=0.05291666666666667cm 2.0,arrowlength=1.4,arrowinset=0.0]{<-}(4.72,-1.56)(2.6,-3.44)
\rput[bl](1.48,-3.96){\Large $(E\circ h(r_1)\circ E^{-1})_{\#}T\lc B(p, r)\backslash B(p, r_1)$}
\psline[linecolor=black, linewidth=0.02, arrowsize=0.05291666666666667cm 2.0,arrowlength=1.4,arrowinset=0.0]{<-}(12.92,-1.64)(14.56,-3.52)
\rput[bl](13.88,-3.92){\Large $(E\circ h(r_1)\circ E^{-1})_{\#}T\lc B(p, r)\backslash B(p, r_1)$}
\end{pspicture}
}
\end{center}

\caption{\label{figure: discretization with point mass_step 1}\small This figure illustrates Step 1 in the discretization process with point mass. We omit the variable $x\in I(m, l)_0$.}
\end{figure}
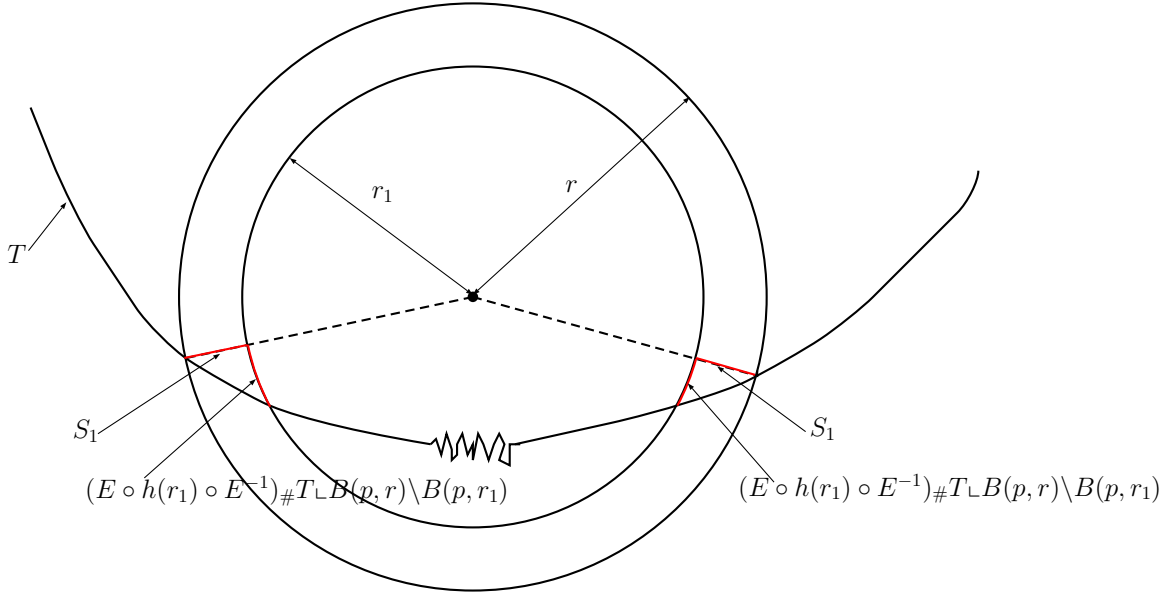

\vspace{0.5em}
\noindent \underline{\bf Step 1}: (See Figure \ref{figure: discretization with point mass_step 1}) For each $x\in I_0(m, l)_0$, let
$$S_1(x)=E_{\#}\Big\{\de_0\ttimes\big[E^{-1}_{\#}\partial (T(x)\lc B(p, r))-\mu(\frac{r_1}{r})_{\#}E^{-1}_{\#}\partial (T(x)\lc B(p, r))\big]\Big\};$$
then by (\ref{mass bound for slices}) and \S \ref{basic facts of exponential map}(k), $spt(S_1(x))\subset A(p, r_1, r)$, and
\begin{equation}\label{mass bound for cone 1}
\M(S_1(x))\leq 2r n^{-1}(1-(\frac{r_1}{r})^n)\M\big(\partial(T(x)\lc B(p, r))\big)\leq 2r n^{-1}\M\big(\partial (T(x)\lc B(p, r))\big)\leq \de_1/5.
\end{equation}
For each $x\in I_0(m, l)_0$, define
\begin{equation}\label{R1}
R_1(x)=\left. \Bigg\{ \begin{array}{ll}
S_1(x), \quad\quad \textrm{ in $A(p, r_1, r)$}\\
(E\circ h(r_1)\circ E^{-1})_{\#}T(x)\lc B(p, r), \quad \textrm{ in $B(p, r_1)$}\\
T(x), \quad\quad \textrm{outside $B(p, r)$}
\end{array}.\right. 
\end{equation}

\begin{claim}\label{claim 2 for case 2}
For each $x\in I_0(m, l)_0$, $R_1(x)=\partial \Om_1(x)$ for some $\Om_1(x)\in\C(M)$.
\end{claim}
\begin{proof}
For each $x\in I_0(m, l)_0$, by the definition of slices \cite[28.4]{Si83}, the slices $\lan\Om(x), r_p, r_i\ran$ is represented by the set $\Om(x)\cap \partial B(p, r_i)$, which has finite perimeter as $\partial \lan\Om(x), r_p, r_i\ran=-\partial \big(T(x)\lc B(p, r_i)\big)$. Denote $O_i(x)=\Om(x)\cap \partial B(p, r_i)=\lan\Om(x), r_p, r_i\ran$, $O(x)=\Om(x)\cap \partial B(p, r)=\lan\Om(x), r_p, r\ran$. Define a subset of $M$ as\footnote{$0\ttimes S$ denotes the cone in $\R^{n+1}$ over $S\subset \R^{n+1}$.}
\begin{equation}\label{Om1}
\Om_1(x)=\left. \Big\{ \begin{array}{ll}
E\big\{0\ttimes \big[E^{-1}O(x)-\frac{r_1}{r}E^{-1} O(x)\big]\big\}, \quad \textrm{ in $A(p, r_1, r)$}\\
\Om(x), \quad\quad \textrm{in $B_0(p, r_1)$ and outside $B(p, r)$}
\end{array}.\right. 
\end{equation}
Clearly $\Om_1(x)$ is a set of finite perimeter, i.e. $\Om(x)\in\C(M)$, as each part supported in $B(p, r_1)$, $B^c(p, r)$, $A(p, r_1, r)$ is. We will show that $R_1(x)=\partial\Om_1(x)$. By \cite[28.5(2)]{Si83},
\begin{displaymath}
\begin{split}
\partial \Om_1(x) & =\partial \big[\Om(x)\lc B^c(p, r)\big]+\partial E\big\{0\ttimes \big[E^{-1}O(x)-\frac{r_1}{r}E^{-1} O(x)\big]\big\}+ \partial \big[\Om(x)\lc B_0(p, r_1)\big]\\
                         & =T(x)\lc B^c(p, r)-\lan\Om(x), r_p, r\ran+ O(x)-(E\circ\mu(\frac{r_1}{r})\circ E^{-1})_{\#}O(x)\\
                         & \ \ -E_{\#}\big\{\de_0\ttimes \big[E^{-1}_{\#}\partial O(x)-\mu(\frac{r_1}{r})_{\#}E^{-1}_{\#}\partial O(x)\big]\}+T(x)\lc B(p, r_1)+\lan\Om(x), r_p, r_1\ran\\
                         & =T(x)\lc B^c(p, r)+T(x)\lc B(p, r_1)-(E\circ\mu(\frac{r_1}{r})\circ E^{-1})_{\#}O(x)+O_1(x)\\
                         & \ \ +E_{\#}\big\{\de_0\ttimes \big[E^{-1}_{\#}\partial (T(x)\lc B(p, r))-\mu(\frac{r_1}{r})_{\#}E^{-1}_{\#}\partial (T(x)\lc B(p, r))\big]\}.
\end{split}
\end{displaymath}
So together with Claim \ref{claim 1 for case 2}(\rom{2}),
\begin{displaymath}
\begin{split}
R_1(x)-\partial\Om_1(x) & =(E\circ h(r_1)\circ E^{-1})_{\#}\big[T(x)\lc A(p, r_1, r)\big]+(E\circ\mu(\frac{r_1}{r})\circ E^{-1})_{\#}O(x)-O_1(x)\\
                                   & =(E\circ h(r_1)\circ E^{-1})_{\#}\big(T(x)\lc A(p, r_1, r)+O(x)-O_1(x)\big)\\
                                   & =(E\circ h(r_1)\circ E^{-1})_{\#}\partial \big(\Om(x)\lc A(p, r_1, r)\big)\\
                                   & =\partial (E\circ h(r_1)\circ E^{-1})_{\#}\big(\Om(x)\lc A(p, r_1, r)\big)\\
                                   & =0,
\end{split}
\end{displaymath}
where we used the fact that $h(r_1)=\mu(\frac{r_1}{r})$ on $\partial B(p, r)$ in the second $``="$, and the fact that any integral $(n+1)$-current on an $n$-dimensional manifold $\partial B(p, r_1)$ is zero in the last $``="$. Hence we finish the proof of the claim.
\end{proof}

As $R_1(x)=\partial\Om_1(x)$, using (\ref{mass bound for small balls}) it is easily seen that
\begin{equation}\label{mass bound for lateral part 1}
\M\big(R_1(x)\lc\partial B(p, r_1)\big)\leq vol\big(\partial B(p, r_1)\big)\leq\de_1/5.
\end{equation}

The set $\{R_1(x): x\in I_0(m, l)_0\}$ satisfies the following properties. First using Claim \ref{claim 1 for case 2}(\rom{2}), Fact \ref{fact 2 of case 2}.1, (\ref{mass bound for cone 1})(\ref{mass bound for lateral part 1}), we have the continuity estimate,
\begin{equation}\label{case 2 continuity estimate 1}
\begin{split}
\M\big(R_1(x)-T(x)\big) &\leq \M\big(T(x)\lc A(p, r_1, r)\big)+\M\big(R_1(x)\lc \partial B(p, r_1)\big)+\M\big(S_1(x)\big)\\
                                 &\leq \de/5+2\de_1/5.
\end{split}
\end{equation}
Using Claim \ref{claim 1 for case 2}(\rom{2}), (\ref{mass bound for cone 1})(\ref{mass bound for lateral part 1}), we have the mass estimate,
\begin{equation}\label{case 2 mass estimate 1}
\begin{split}
\M\big(R_1(x)\big) &\leq \M\big(T(x)\lc B^c(p ,r)\big)+\M\big(S_1(x)\big)+\M\big(R_1(x)\lc \partial B(p, r_1)\big)+\M\big(T(x)\lc B(p, r_1)\big)\\
                           &\leq \M\big(T(x)\big)+2\de_1/5.
\end{split}
\end{equation}
If $m>1$, given $x, y\in I_0(m, l)_0$, such that $d(x, y)=1$, then
\begin{displaymath}
\begin{split}
R_1(x)-R_1(y) & =\big(S_1(x)-S_1(y)\big)+\big(R_1(x)-R_1(y)\big)\lc \partial B(p, r_1)\\
                     & +\big(T(x)-T(y)\big)\lc B(p, r_1)\cup B^c(p, r);
\end{split}
\end{displaymath}
hence using (\ref{mass bound for cone 1})(\ref{mass bound for lateral part 1}), we have the fineness estimate,
\begin{equation}\label{case 2 fineness estimate 1}
\begin{split}
\M\big(R_1(x)-R_1(y)\big) &\leq \M\big(R_1(x)\lc \partial B(p, r_1)\big)+\M\big(R_1(y)\lc \partial B(p, r_1)\big)\\
                                     &\ \ +\M\big(S_1(x)\big)+\M\big(S_1(y)\big)+\M\big(T(x)-T(y)\big)\\
                                     &\leq 4\de_1/5+\f(\phi),
\end{split}
\end{equation}
where $\f(\phi)$ is the fineness (\ref{fineness}) of $\phi$. 

\vspace{0.5em}
\noindent \underline{\bf Step 2}: Now for $2\leq i\leq \nu$, $x\in I_0(m, l)_0$, we can similarly define
$$S_i(x)=E_{\#}\Big\{\de_0\ttimes\big[E^{-1}_{\#}\partial (T(x)\lc B(p, r))-\mu(\frac{r_i}{r})_{\#}E^{-1}_{\#}\partial (T(x)\lc B(p, r))\big]\Big\};$$
then by (\ref{mass bound for slices}) and \S \ref{basic facts of exponential map}(k), $spt(S_i(x))\subset A(p, r_i, r)$, and
\begin{equation}\label{mass bound for cone i}
\M(S_i(x))\leq 2r n^{-1}(1-(\frac{r_i}{r})^n)\M(\partial (T(x)\lc B(p, r)))\leq 2r n^{-1}\M(\partial (T(x)\lc B(p, r)))\leq \de_1/5.
\end{equation}
Similarly define
\begin{equation}\label{Ri}
R_i(x)=\left. \Bigg\{ \begin{array}{ll}
S_i(x), \quad\quad \textrm{ in $A(p, r_i, r)$}\\
(E\circ h(r_i)\circ E^{-1})_{\#}T(x)\lc B(p, r), \quad \textrm{ in $B(p, r_i)$}\\
T(x), \quad\quad \textrm{outside $B(p, r)$}
\end{array}.\right. 
\end{equation}
The same argument as in Claim \ref{claim 2 for case 2} with $r_1$ changed to $r_i$ shows that $R_i(x)=\partial\Om_i(x)$, $\Om_i(x)\in\C(M)$ for all $2\leq i\leq \nu$, $x\in I_0(m, l)_0$, with
\begin{equation}\label{Omi}
\Om_i(x)=\left. \Big\{ \begin{array}{ll}
E\big\{0\ttimes \big[E^{-1}O(x)-\frac{r_i}{r}E^{-1} O(x)\big]\big\}, \quad \textrm{ in $A(p, r_i, r)$}\\
\Om(x), \quad\quad \textrm{in $B_0(p, r_i)$ and outside $B(p, r)$}
\end{array},\right. 
\end{equation}
and hence by (\ref{mass bound for small balls}),
\begin{equation}\label{mass bound for lateral part i}
\M\big(R_i(x)\lc\partial B(p, r_i)\big)\leq vol\big(\partial B(p, r_i)\big)\leq\de_1/5.
\end{equation}
Using (\ref{mass bound for cone i})(\ref{mass bound for lateral part i}) in place of (\ref{mass bound for cone 1})(\ref{mass bound for lateral part 1}) and similar estimates as in Step 1, the currents $\{R_i(x): 2\leq i\leq \nu, x\in I_0(m, l)_0\}$ satisfy the following properties:
\begin{equation}\label{case 2 continuity estimate i}
\begin{split}
\M\big(R_i(x)-R_{i-1}(x)\big) &\leq \M\big(T(x)\lc A(p, r_i, r_{i-1})\big)+\M\big(R_i(x)\lc \partial B(p, r_i)\big)\\
                                        &\ \ +\M\big(R_{i-1}(x)\lc \partial B(p, r_{i-1})\big)+\M\big(S_i(x)-S_{i-1}(x)\big)\\
                                        &\leq \de/5+3\de_1/5.
\end{split}
\end{equation}
\begin{equation}\label{case 2 mass estimate i}
\begin{split}
\M\big(R_i(x)\big) &\leq \M\big(T(x)\lc B^c(p ,r)\big)+\M\big(S_i(x)\big)+\M\big(R_i(x)\lc \partial B(p, r_i)\big)+\M\big(T(x)\lc B(p, r_i)\big)\\
                          &\leq \M\big(T(x)\big)+2\de_1/5.
\end{split}
\end{equation}
If $m>1$, given $x, y\in I_0(m, l)_0$, such that $d(x, y)=1$, then
\begin{equation}\label{case 2 fineness estimate i}
\begin{split}
\M\big(R_i(x)-R_i(y)\big) &\leq \M\big(R_i(x)\lc \partial B(p, r_i)\big)+M\big(R_i(y)\lc \partial B(p, r_i)\big)\\
                                     &\ \ +\M\big(S_i(x)\big)+\M\big(S_i(y)\big)+\M\big(T(x)-T(y)\big)\\
                                     &\leq 4\de_1/5+\f(\phi).
\end{split}
\end{equation}

\vspace{0.5em}
\noindent \underline{\bf Step 3}: Define the cones
$$S_{\nu+1}(x)=E_{\#}\Big\{\de_0\ttimes E^{-1}_{\#}\partial (T(x)\lc B(p, r))\Big\};$$
then by (\ref{mass bound for slices}) and \S \ref{basic facts of exponential map}(k), $spt(S_{\nu+1}(x))\subset B(p, r)$, and
\begin{equation}\label{mass bound for final cone}
\M(S_{\nu+1}(x))\leq 2r n^{-1}\M(\partial (T(x)\lc B(p, r)))\leq \de_1/5.
\end{equation}
Define
\begin{equation}\label{R final}
R_{\nu+1}(x)=\left. \Big\{ \begin{array}{ll}
S_{\nu+1}(x), \quad\quad \textrm{ in $B(p, r)$}\\
T(x), \quad\quad \textrm{outside $B(p, r)$}
\end{array}.\right. 
\end{equation}
Similar argument as in Claim \ref{claim 2 for case 2} with $r_1$ changed to $0$ shows that $R_{\nu+1}(x)=\partial\Om_{\nu+1}(x)$, $\Om_{\nu+1}(x)\in\C(M)$ for all $x\in I_0(m, l)_0$, with
\begin{equation}\label{Om final}
\Om_{\nu+1}(x)=\left. \Big\{ \begin{array}{ll}
E\big\{0\ttimes\big[E^{-1}O(x)\big]\big\}, \quad \textrm{ in $B(p, r)$}\\
\Om(x), \quad\quad \textrm{outside $B(p, r)$}
\end{array}.\right. 
\end{equation}
Using Claim \ref{claim 1 for case 2}(\rom{2}), Fact \ref{fact 2 of case 2}.1, (\ref{mass bound for lateral part i})(\ref{mass bound for final cone}), we have that
\begin{equation}\label{case 2 continuity estimate final}
\begin{split}
\M\big(R_{\nu+1}(x)-R_{\nu}(x)\big) &\leq \M\big(T(x)\lc B(p, r_{\nu})\big)+\M\big(R_{\nu}(x)\lc \partial B(p, r_{\nu})\big)\\
                                        & \ \ +\M\big(S_{\nu+1}(x)-S_{\nu}(x)\big)\\
                                        &\leq \de/5+2\de_1/5.
\end{split}
\end{equation}
\begin{equation}\label{case 2 mass estimate final}
\begin{split}
\M\big(R_{\nu+1}(x)\big) &\leq \M\big(T(x)\lc B^c(p ,r)\big)+\M\big(S_{\nu+1}(x)\big)\\
                                   &\leq \M\big(T(x)\big)+\de_1/5.
\end{split}
\end{equation}
If $m>1$, given $x, y\in I_0(m, l)_0$, such that $d(x, y)=1$, then
\begin{equation}\label{case 2 fineness estimate final}
\begin{split}
\M\big(R_{\nu+1}(x)-R_{\nu+1}(y)\big) &\leq \M\big(S_i(x)\big)+\M\big(S_i(y)\big)+\M\big(T(x)-T(y)\big)\\
                                                      &\leq 2\de_1/5+\f(\phi).
\end{split}
\end{equation}

\vspace{0.5em}
\noindent \underline{\bf Step 4}:
Take $\nu+1=3^{\ti{N}}$ for $\ti{N}\in \N$, then $\ti{N}$ depends only on $l, m, \de, L$ by Fact \ref{fact 2 of case 2}.4. We can define a map
$$\psi: I(1, \ti{N})_0\times I_0(m, l)_0\rightarrow \Z_n(M^{n+1}),$$
by $\psi(0, x)=T(x)=\phi(x)$, $\psi([\frac{i}{3^{\ti{N}}}], x)=R_i(x)$ for $1\leq i\leq \nu+1$. Now we check that $\psi$ satisfy Lemma \ref{case 2}(0)(\rom{1})(\rom{3})(\rom{4}). By combining (\ref{case 2 continuity estimate 1})(\ref{case 2 mass estimate 1})(\ref{case 2 fineness estimate 1})(\ref{case 2 continuity estimate i})(\ref{case 2 mass estimate i})(\ref{case 2 fineness estimate i})(\ref{case 2 continuity estimate final})(\ref{case 2 mass estimate final})(\ref{case 2 fineness estimate final}) and our construction, we have
\begin{itemize}
\setlength{\itemindent}{1em}
\addtolength{\itemsep}{-0.7em}
\item[$(0)$] $\psi([\frac{i}{3^{\ti{N}}}], x)=\partial[[\Om_i(x)]]$, $\Om_i(x)\in\C(M)$;

\item[$(\rom{1})$] $\f(\psi)\leq \de/5+3\de_1/5$ if $m=1$, and $\f(\psi)\leq \max\{\de/5+3\de_1/5, \f(\phi)+4\de_1/5\}$ if $m>1$;

\item[$(\rom{3})$] $\max\{\M(\psi([\frac{i}{3^{\ti{N}}}], x))\}\leq \max\{\M(\phi(x))\}+2\de_1/5$.
\end{itemize}

If $m=1$, $\de<\nu_M$, let us calculate $F_A(\psi|_{I(1, \ti{N})_0\times\{[0]\}})$ and $F_A(\psi|_{I(1, \ti{N})_0\times\{[1]\}})$. First focus on $F_A(\psi|_{I(1, \ti{N})_0\times\{[0]\}})$. We will use notions as above. By the definition of Almgren's isomorphism (\ref{Almgren isomorphism1}),
$$F_A(\psi|_{I(1, \ti{N})_0\times\{[0]\}})=\sum_{i=1}^{\nu+1}Q_i(0),$$
where $Q_1(0)$ is the isoperimetric choice for $R_1(0)-T(0)$, and $Q_i(0)$ is the isoperimetric choice of $R_i(0)-R_{i-1}(0)$, $2\leq i\leq \nu+1$, with $R_i(0)$ given by (\ref{R1})(\ref{Ri})(\ref{R final}). Recall that $T(0)=\partial\Om(0)$, $R_i(0)=\partial \Om_i(0)$, with $\Om(0), \Om_i(0)\in\C(M)$, and that $\Om_i(0)-\Om_{i-1}(0)$ are all supported in $B(p, r)$ by the construction (\ref{Om1})(\ref{Omi})(\ref{Om final}), so $\M\big(\Om_i(0)-\Om_{i-1}(0)\big)\leq vol(B(p, r))\leq 1/2vol(M)$, as $r$ is very small. By Lemma \ref{lemma on isoperimetric choice}, $Q_1(0)=\Om_1(0)-\Om(0)$, $Q_i(0)=\Om_i(0)-\Om_{i-1}(0)$ for $2\leq i\leq \nu+1$, hence
$$F_A(\psi|_{I(1, \ti{N})_0\times\{[0]\}})=\Om_1(0)-\Om(0)+\sum_{i=2}^{\nu+1}(\Om_i(0)-\Om_{i-1}(0))=\Om_{\nu+1}(0)-\Om(0).$$
Similarly we can prove that $F_A(\psi|_{I(1, \ti{N})_0\times\{[1]\}})=\Om_{\nu+1}(1)-\Om(1)$. By changing the notions, we 
showed that
\begin{itemize}
\vspace{-5pt}
\setlength{\itemindent}{1em}
\addtolength{\itemsep}{-0.7em}
\item[$(\rom{4})$] If $m=1$, $\de<\nu_M$, $\phi([0])=\partial[[\Om_0]]$, $\phi([1])=\partial[[\Om_1]]$, $\psi([1]\otimes [0])=\partial[[\Om^{\pr}_0]]$, $\psi([1]\otimes [1])=\partial[[\Om^{\pr}_1]]$, then
$$F_A(\psi|_{I(1, \ti{N})_0\times\{[0]\}})=[[\Om^{\pr}_0-\Om_0]],\quad F_A(\psi|_{I(1, \ti{N})_0\times \{[1]\}})=[[\Om^{\pr}_1-\Om_1]].$$
\end{itemize}

\vspace{0.5em}
\noindent \underline{\bf Step 5}:
We now pick up the subscript $``j"$. For each $\phi_j$, $j\geq J$, we can construct $\psi_j: I(1, \ti{N})_0\times I_0(m, l)_0\rightarrow \Z_n(M^{n+1})$ as above. Denote $\phi_j(x)=\partial [[\Om_j(x)]]$, and $\psi_j(y, x)=R_{j, i}(x)=\partial[[\Om_{j, i}(x)]]$ for $y=[\frac{i}{3^{\ti{N}}}]$, with $\Om_j(x), \Om_{j, i}(x)\in\C(M)$. By the construction (\ref{Om1})(\ref{Omi})(\ref{Om final}), $\Om_{j, i}(x)-\Om_j(x)$ are all supported in $B(p_j, r_j)$. Recall that $r_j\rightarrow 0$ by Claim \ref{claim 1 for case 2}, so
$$\F\big(\psi_j(y, x), \phi_j(x)\big)\leq \M\big(\Om_{j, i}(x)-\Om_j(x)\big)\leq vol\big(B(p_j, r_j)\big)\rightarrow 0,$$
uniformly for all $(y, x)\in I(1, \ti{N})_0\times I_0(m, l)_0$ as $j\rightarrow\infty$.

Define
$$\rho_j=\ep_j+\max\{\F\big(\psi_j(y, x), \phi_j(x)\big): (y, x)\in I(1, \ti{N})_0\times I_0(m, l)_0\},$$
where $\ep_j$ is given in Lemma \ref{lemma 1 for technical 1}; then $\rho_j\rightarrow 0$, as $j\rightarrow \infty$, and $\F(\psi_j(y, x), T)\leq \F(\psi_j(y, x), \phi_j(x))+\F(\phi_j(x), T)\leq \rho_j$, so
$$\psi_j: I(1, \ti{N})_0\times I_0(m, l)_0\rightarrow \B^{\F}_{\rho_j}(T).$$

Finally, we claim that
\begin{itemize}
\vspace{-5pt}
\setlength{\itemindent}{1em}
\addtolength{\itemsep}{-0.7em}
\item[$(\rom{2})$]
\begin{equation}\label{varifold convergence away from the singular point}
\lim_{j\rightarrow\infty}|\psi_j([1], x)|=V(x)\lc G_n\big(M\backslash \{q\}\big),\ \textrm{as varifolds}.
\end{equation}
\end{itemize}
In fact, by (\ref{R final}), $\psi_j([1], x)=\phi_j(x)$ outside $B(p_j, r_j)$, and inside $B(p_j, r_j)$, by (\ref{mass bound for final cone}) and Claim \ref{claim 1 for case 2}(\rom{3})(\rom{5}),
$$\M\big(\psi_j([1], x)\lc B(p_j, r_j)\big)\leq 2r_j n^{-1}\M\big(\partial (T_j(x)\lc B(p_j, r_j))\big)\rightarrow 0, \textrm{ as }j\rightarrow\infty.$$
Therefore (\ref{varifold convergence away from the singular point}) is a directly corollary of Claim \ref{claim 1 for case 2}(\rom{6}).

\vspace{0.5em}
All the above properties show that $\{\psi_j\}$ satisfy Lemma \ref{case 2} when $S_{con}=\{q\}$.

\vspace{1em}
\noindent{\bf\large Part \Rom{2}:}
If $S_{con}$ contains more than one point, we can construct $\psi_j$ successively on the pairwise disjoint neighborhoods $\{Z_q: q\in S_{con}\}$ as above, as the construction is purely local. The only things to be taken care of are the increase of mass and fineness. 

Write $S_{con}=\{q_a\}_{a=1}^{\ka}$,  $Z_a=Z_{q_a}$, $\ka\in\N$. As mentioned above, $\ka\leq C(m, l)\frac{2L}{\al}$ depends only on $m, l, \de, L$. We start by following the above process inside $Z_1$ to extend $\phi_j$ (possibly up to a subsequence) to $\psi^1_j: I(1, \ti{N})_0\times I_0(m, l)_0\rightarrow \B^{\F}_{\rho^1_j}(T)$,  where $\{\rho^1_j\}$ is a sequence of positive numbers converging to zero. Denote $\phi^1_j(\cdot)=\psi^1_j([1], \cdot)$. Then $\{\psi^1_j\}$ satisfy (by Step 4 and Step 5 in Part 1): for all $x\in I_0(m, l)_0$
\begin{itemize}
\vspace{-5pt}
\setlength{\itemindent}{1em}
\addtolength{\itemsep}{-0.7em}
\item $\psi^1_j([\frac{i}{3^{\ti{N}}}])=\partial[[\Om^1_{j, i}(x)]]$, $\Om^1_{j, i}(x)\in \C(M)$;

\item $\f(\psi^1_j)\leq \de/5+3\de_1/5$ if $m=1$, and $\f(\psi^1_j)\leq \max\{\de/5+3\de_1/5, \f(\phi_j)+4\de_1/5\}$ if $m>1$;

\item $\psi_j^1([0], x)=\phi_j(x)$, $\lim_{j\rightarrow\infty}|\psi^1_j([1], x)|=V(x)\lc G_n(M\backslash\{q_1\})$ as varifolds;

\item $\max\{\M\big(\psi^1_j([\frac{i}{3^{\ti{N}}}], x)\big)\}\leq \max\{\M\big(\phi_j(x)\big)\}+2\de_1/5$;

\item If $m=1$, and denote $\phi_j([0])=\partial[[\Om_{j, 0}]]$, $\phi_j([1])=\partial[[\Om_{j, 1}]]$, $\phi^1_j([0])=\partial[[\Om^1_{j, 0}]]$, $\phi^1_j([1])=\partial[[\Om^1_{j, 1}]]$; then
$$F_A(\psi^1_j|_{I(1, \ti{N})_0\times\{[0]\}})=[[\Om^1_{j, 0}-\Om_{j, 0}]],\quad F_A(\psi^1_j|_{I(1, \ti{N})_0\times \{[1]\}})=[[\Om^1_{j, 1}-\Om_{j, 1}]].$$
\end{itemize}

Also $\{\phi^1_j\}$ satisfy: for all $x\in I_0(m, l)_0$,
\begin{itemize}
\vspace{-5pt}
\setlength{\itemindent}{1em}
\addtolength{\itemsep}{-0.7em}
\item  $\phi^1_j(x)=\phi_j(x)$ outside $Z_1$, by (\ref{R final});

\item $\phi^1_j(x)=\partial[[\Om^1_j(x)]]$, $\Om^1_j(x)\in\C(M)$;

\item $\lim_{j\rightarrow\infty}|\phi^1_j(x)|=V(x)\lc G_n(M\backslash\{q_1\})$, as varifolds;

\item $\M\big(\phi^1_j(x)\big)\leq \M\big(\phi_j(x)\big)+\de_1/5$, by (\ref{case 2 mass estimate final});

\item If $m>1$, $\f(\phi^1_j)\leq \f(\phi_j)+2\de_1/5$, by (\ref{case 2 fineness estimate final}).
\end{itemize}





As $\phi^1_j(x)=\phi_j(x)$ outside $Z_1$, for all $x\in I_0(m, l)_0$, we can repeat the construction in Part \Rom{1} inductively on $Z_2, \cdots, Z_\ka$, to get (possibly up to subsequences) $\{\psi^a_j\}$ and $\{\phi^a_j\}$, $2\leq a\leq \ka$, such that $\psi^a_j: I(1, \ti{N})_0\times I_0(m, l)_0\rightarrow \B^{\F}_{\rho^a_j}(T)$, $\phi^a_j: I_0(m, l)_0\rightarrow \B^{\F}_{\rho^a_j}(T)$, with $\{\rho^a_j\}$ a sequence of positive numbers converging to zero as $j\rightarrow \infty$ for each $2\leq a\leq \ka$, and $\phi^a_j(x)=\psi^a_j([1], x)$, and the following statements are true. For each $2\leq a\leq \ka$,

$\{\psi^a_j\}$ satisfy that:  for all $x\in I_0(m, l)_0$,
\begin{enumerate}
\vspace{-5pt}
\setlength{\itemindent}{1em}
\addtolength{\itemsep}{-0.7em}
\item $\psi^a_j([\frac{i}{3^{\ti{N}}}])=\partial[[\Om^a_{j, i}(x)]]$, $\Om^a_{j, i}(x)\in \C(M)$;

\item $\f(\psi^a_j)\leq \de/5+3\de_1/5$ if $m=1$, and if $m>1$, $\f(\psi^a_j)\leq \max\{\de/5+3\de_1/5, \f(\phi^{a-1}_j)+4\de_1/5\}$, so by property 5 of $\phi^a_j$ (see below),
$$\f(\psi^a_j)\leq \max\{\de/5+3\de_1/5, \f(\phi_j)+2(a+1)\de_1/5\};$$

\item $\psi^a_j([0], x)=\phi^{a-1}_j(x)$, $\lim_{j\rightarrow\infty}|\psi^a_j([1], x)|=V(x)\lc G_n(M\backslash\{q_1,\cdots, q_a\})$ as varifolds;

\item $\max\{\M\big(\psi^a_j([\frac{i}{3^{\ti{N}}}], x)\big)\}\leq \max\{\M\big(\phi^{a-1}_j(x)\big)\}+2\de_1/5$, hence by property 4 of $\phi^a_j$ (see below),
$$\max\{\M\big(\psi^a_j([\frac{i}{3^{\ti{N}}}], x)\big)\}\leq \max\{\M\big(\phi_j(x)\big)\}+(a+1)\de_1/5;$$

\item If $m=1$, and denote $\phi^{a-1}_j([0])=\partial[[\Om^{a-1}_{j, 0}]]$, $\phi^{a-1}_j([1])=\partial[[\Om^{a-1}_{j, 1}]]$, $\phi^a_j([0])=\partial[[\Om^a_{j, 0}]]$, $\phi^a_j([1])=\partial[[\Om^a_{j, 1}]]$; then
$$F_A(\psi^a_j|_{I(1, \ti{N})_0\times\{[0]\}})=[[\Om^a_{j, 0}-\Om^{a-1}_{j, 0}]],\quad F_A(\psi^a_j|_{I(1, \ti{N})_0\times \{[1]\}})=[[\Om^a_{j, 1}-\Om^{a-1}_{j, 1}]].$$
\end{enumerate}

$\{\phi^a_j\}$ satisfy: for all $x\in I_0(m, l)_0$,
\begin{enumerate}
\vspace{-5pt}
\setlength{\itemindent}{1em}
\addtolength{\itemsep}{-0.7em}
\item  $\phi^a_j(x)=\phi_j(x)$ outside $Z_1\cup\cdots\cup Z_a$, by (\ref{R final});

\item $\phi^a_j(x)=\partial[[\Om^a_j(x)]]$, $\Om^a_j(x)\in\C(M)$;

\item $\lim_{j\rightarrow\infty}|\phi^a_j(x)|=V(x)\lc G_n(M\backslash\{q_1,\cdots, q_a\})$, as varifolds;

\item $\M\big(\phi^a_j(x)\big)\leq \M\big(\phi^{a-1}_j(x)\big)+\de_1/5$ by (\ref{case 2 mass estimate final}), so
$$\M\big(\phi^a_j(x)\big)\leq \M\big(\phi_j(x)\big)+a\de_1/5;$$

\item If $m>1$, $\f(\phi^a_j)\leq \f(\phi^{a-1}_j)+2\de_1/5$ by (\ref{case 2 fineness estimate final}), so
$$\f(\phi^a_j)\leq \f(\phi_j)+2a\de_1/5.$$
\end{enumerate}

Finally, let $\ka(\nu+1)=3^{N_2}$, for some $N_2\in\N$, with $\nu$ given in Fact \ref{fact 2 of case 2}; then $N_2$ depends only on $m, l, \de, L$. Recall that $\nu+1=3^{\ti{N}}$ (see Step 4 in Part \Rom{1}); then we can define $\psi_j: I(1, N_2)_0\times I_0(m, l)_0\rightarrow\B^{\F}_{\rho^{\ka}_j}(T)$ as:
\begin{equation}
\psi_j([\frac{i}{3^{N_2}}], x)=\psi^a_j([\frac{i-(a-1)(\nu+1)}{3^{\ti{N}}}], x), \quad \textrm{ if } (a-1)(\nu+1)\leq i\leq a(\nu+1).
\end{equation}
Choose $\de_1<\de$, such that 
$$2(\ka+1)\de_1/5\leq \de, (\ka+1)\de_1/5\leq \frac{\de}{n_0+1},$$ 
and let $\rho_j=\rho^{\ka}_j$; then $\psi_j$ satisfy (0)(\rom{1})(\rom{2})(\rom{3}) in Lemma \ref{case 2}. To check Lemma \ref{case 2}(\rom{4}), if $m=1$, by the definition of Almgren's isomorphism (\ref{Almgren isomorphism1}),
$$F_A(\psi_j|_{I(1, N_2)_0\times\{[0]\}})=\sum_{a=1}^{\ka}F_A(\psi^a_j|_{I(1, \ti{N})_0\times\{[0]\}})=\sum_{a=1}^{\ka}[[\Om^a_{j, 0}-\Om^{a-1}_{j, 0}]]=[[\Om^{\ka}_{j, 0}-\Om_{j, 0}]].$$
Similarly, $F_A(\psi_j|_{I(1, N_2)_0\times\{[1]\}})=[[\Om^{\ka}_{j, 1}-\Om_{j, 1}]]$. So Lemma \ref{case 2}(\rom{4}) is true by noticing that $\psi_j([1], [0])=\partial[[\Om^{\ka}_{j, 0}]]$ and $\psi_j([1], [1])=\partial[[\Om^{\ka}_{j, 1}]]$. The proof of Lemma \ref{case 2} is now finished.

\end{proof}

Now let us go back to the proof of Lemma \ref{lemma 1 for technical 1}. If $S_{con}=\emptyset$, then $\psi_j$ can be constructed by Lemma \ref{case 1} with $\rho_j=\ep_j$, $N=N_1$. If $S_{con}\neq \emptyset$, let $\de^{\pr}=\de/2$, and construct (possibly up to a subsequence) $\psi^2_j: I(1, N_2)_0\times I_0(m, l)_0\rightarrow \B^{\F}_{\rho_j}(T)$ by Lemma \ref{case 2} for the set of numbers $l, m, \de^{\pr}, L$. Then denote $\phi_j^{\pr}(\cdot)=\psi^2_j([1], \cdot): I_0(m, l)_0\rightarrow \B^{\F}_{\rho_j}(T)$. By Lemma \ref{case 2}(\rom{2}), $\{\phi^{\pr}_j\}$ satisfy the requirement of Lemma \ref{case 1} for the set of numbers $l, m, \de^{\pr}, L+\frac{\de^{\pr}}{n_0+1}$. Now we can apply Lemma \ref{case 1} to $\{\phi^{\pr}_j\}$, and construct (possibly up to a subsequence) $\psi^1_j: I(1, N_1)_0\times I_0(m, l)_0\rightarrow \B^{\F}_{\rho_j}(T)$.

Assume $3^{N}=3^{N_1}+3^{N_2}$, $N\in\N$; then $N$ depends only on $(l, m, T, \de, L)$, as $N_2$ depends only on $(l, m, \de/2, L)$ and $N_1$ depends only on $(l, m, T, \de/2, L+\frac{\de}{2(n_0+1)})$. Define $\psi_j: I(1, N)_0\times I_0(m, l)_0\rightarrow \B^{\F}_{\rho_j}(T)$ by
\begin{displaymath}
\begin{split}
 & \psi_j([\frac{i}{3^N}], x)=\psi^2_j([\frac{i}{3^{N_2}}], x), \quad \textrm{ if } 0\leq i\leq 3^{N_2};\\
 & \psi_j([\frac{i}{3^N}], x)=\psi^1_j([\frac{i-3^{N_2}}{3^{N_1}}], x), \quad \textrm{ if } 3^{N_2}\leq i\leq 3^{N}.
\end{split}
\end{displaymath}
Then $\{\psi_j\}$ satisfy Lemma \ref{lemma 1 for technical 1}(0)(\rom{2})(\rom{4}) by combining Lemma \ref{case 1}(0)(\rom{2})(\rom{4}) with Lemma \ref{case 2} (0)(\rom{2})(\rom{4}). For Lemma \ref{lemma 1 for technical 1}(\rom{1}), if $m=1$,
$$\f(\psi_j)\leq \max\{\f(\psi^1_j), \f(\psi^2_j)\}\leq \de/2;$$
if $m>1$, then by Lemma \ref{case 1}(\rom{1}) and Lemma \ref{case 2}(\rom{1}),
$$\f(\psi_j)\leq \max\{\f(\psi^1_j), \f(\psi^2_j)\}\leq \f(\psi^2_j)+\de/2\leq \f(\phi_j)+\de.$$
For Lemma \ref{lemma 1 for technical 1}(\rom{3}), by Lemma \ref{case 1}(\rom{3}) and Lemma \ref{case 2}(\rom{3}),
\begin{displaymath}
\begin{split}
\max\big\{\M(\psi_j(\cdot, \cdot))\big\} & \leq \max\big\{\max\{\M(\psi^1_j(\cdot, \cdot))\}, \max\{\M(\psi^2_j(\cdot, \cdot))\}\big\}\\
                                               & \leq \max\{\M(\psi^2_j(\cdot, \cdot))\}+\frac{\de}{2(n_0+1)}\leq \max\{\M(\phi_j(\cdot))\}+\frac{\de}{n_0+1}.
\end{split}
\end{displaymath}
So we finished checking that $\{\psi_j\}$ satisfy Lemma \ref{lemma 1 for technical 1}(0)(\rom{1})(\rom{2})(\rom{3})(\rom{4}). 
\end{proof}

Now let us go back to the proof of Proposition \ref{technical 1}. This part is similar to the final part of \cite[13.3]{MN12}. We will use notions in Lemma \ref{lemma 1 for technical 1}. We are going to construct the extensions $\ti{\phi}_j$ of $\phi_j$ from $I(m, k_0)_0$ to $\B^{\F}_{\rho_j}(T)$ for every $j$ large enough, therefore get a contradiction.

First let us discuss the case when $m>1$. Let
$$\hat{\phi}_j: I_0(m, N)_0\times I(1, N)_0\rightarrow \B^{\F}_{\rho_j}(T),$$
be defined by $\hat{\phi}_j(x, y)=\psi_j(y, \n(N, l)(x))$, where $\psi_j$ are constructed in Lemma \ref{lemma 1 for technical 1}. Recall that $S(m+1, N)_0=I_0(m, N)_0 \times I(1, N)_0$. We can extend $\hat{\phi}_j$ to
$$S(m+1, N)_0\cup T(m+1, N)_0,$$
by assigning it to $T$ on $T(m+1, N)_0$.

Now recall the map $\mr(N): I(m, N+q)_0\rightarrow S(m+1, N)_0\cup T(m+1, N)_0$ defined in \cite[Appendix C]{MN12}, which satisfies: $q$ depends on $m$ but not on $N$; if $x, y \in I(m, N+q)_0$, $\md(x, y)=1$, then $\md\big(\mr(N)(x), \mr(N)(y)\big)\leq m$; if $x\in I_0(m, N+q)_0$, then $\mr(N)(x)\in [0]\times I_0(m, N)_0$ and $\mr(N)(x)=\n(N+q, N)(x)$.

With out loss of generality, we can assume $k_0>N+q$; then the extension $\ti{\phi}_j: I(m, k_0)_0\rightarrow \B^{\F}_{\rho_j}(T)$ is defined by
$$\ti{\phi}_j=\hat{\phi}_j\circ \mr_m(N)\circ \n(k_0, N+q),$$
for which Proposition \ref{technical 1}(\rom{1})(\rom{2})(\rom{3}) are easily seen true by Lemma \ref{lemma 1 for technical 1}(\rom{1})(\rom{2})(\rom{3}).

\vspace{0.5em}

Finally when $m=1$, define $\hat{\phi}_j: I(1, N+1)_0\rightarrow \B^{\F}_{\rho_j}(T)$ by:
\begin{displaymath}
\begin{split}
& \hat{\phi}_j([\frac{i}{3^{N+1}}])=\psi_j([\frac{i}{3^{N}}], [0]), \quad \textrm{ if } 0\leq i\leq 3^N;\\
& \hat{\phi}_j([\frac{i}{3^{N+1}}])=T, \quad\quad \textrm{ if } 3^N+1\leq i\leq 2\cdot 3^N;\\
& \hat{\phi}_j([\frac{i}{3^{N+1}}])=\psi_j([\frac{3^{N+1}-i}{3^{N}}], [1]), \quad \textrm{ if } 2\cdot 3^N+1\leq i\leq 3^{N+1},
\end{split}
\end{displaymath}
for which Proposition \ref{technical 1}(\rom{1})(\rom{2})(\rom{3}) are automatically true by Lemma \ref{lemma 1 for technical 1}(\rom{1})(\rom{2})(\rom{3}). To check Proposition \ref{technical 1}(\rom{4}), by the definition of Almgren's isomorphism (\ref{Almgren isomorphism1}) and Lemma \ref{lemma 1 for technical 1}(\rom{4}),
\begin{displaymath}
\begin{split}
F_A(\hat{\phi}_j) & =F_A\big(\psi_j|_{I(1, N)_0\times\{[0]\}}\big)-F_A\big(\psi_j|_{I(1, N)_0\times\{[1]\}}\big)\\
                           & =[[\Om_T-\Om_{j, 0}]]-[[\Om_T-\Om_{j, 1}]]=[[\Om_{j, 1}-\Om_{j, 0}]].
\end{split}
\end{displaymath}
For $k_0>N+1$, the extension $\ti{\phi}_j: I(1, k_0)_0\rightarrow \B^{\F}_{\rho_j}(T)$ is given by $\hat{\phi}_j\circ\n(k_0, N+1)$.
\end{proof}

\vspace{1em}
The next result removes the dependence of $\ep$ and $k$ on the parameters $l, m$ in Proposition \ref{technical 1}, which is analogous to \cite[13.5]{MN12}. The idea is to apply Proposition \ref{technical 1} inductively along the $p$-skeletons of $I(m, l)$, $1\leq p\leq m$. In the induction process, compared to \cite[13.5]{MN12} where they need to pay attention to the increase of the parameter $``\mathbf{m}(\phi, r)"$\footnote{This parameter measures the local mass density. See \cite[4.2]{MN12}.}, we need to take care of the increase of the size of the neighborhoods around $T$.

Fix $n_0\in\N$. $b(n_0)$ is a constant depending only on $n_0$.

\begin{proposition}\label{technical 2}
Given $\de, L>0$, and
$$T\in\Z_n(M)\cap \{S: \M(S)\leq 2L-\de\}, \textrm{with}\ T=\partial[[\Om_T]],$$
$\Om_T\in\C(M)$, 
then there exist $0<\ep=\ep(T, \de, L)<\de$, and $k=k(T, \de, L)\in\N$, and a function $\rho=\rho_{(T, \de, L)}: \R^1_+\rightarrow \R^1_+$, with $\rho(s)\rightarrow 0$, as $s\rightarrow 0$, such that: given $l, m\in\N$, $m\leq n_0+1$, $0<s<\ep$, and
\begin{equation}\label{phi to be extended 2}
\phi: I_0(m, l)_0\rightarrow \B^{\F}_s(T)\cap\{S: \M(S)\leq 2L-\de\},\ \textrm{with } \phi(x)=\partial[[\Om_x]],
\end{equation}
$\Om_x\in\C(M)$, 
$x\in I_0(m, l)_0$, there exists
$$\ti{\phi}: I(m, l+k)_0\rightarrow \B^{\F}_{\rho(s)}(T),\ \textrm{with }\ti{\phi}(y)=\partial[[\Om_y]],$$
$\Om_y\in\C(M)$, 
$y\in I(m, l+k)_0$, and satisfying
\begin{itemize}
\setlength{\itemindent}{1em}
\addtolength{\itemsep}{-0.7em}
\item[$(\rom{1})$] $\f(\ti{\phi})\leq \de$ if $m=1$, and $\f(\ti{\phi})\leq b(n_0)(\f(\phi)+\de)$ if $m>1$;
\item[$(\rom{2})$] $\ti{\phi}=\phi\circ \n(l+k, l)$ on $I_0(m, l+k)_0$;
\item[$(\rom{3})$] $$\sup_{x\in I(m, l+k)_0}\M\big(\ti{\phi}(x)\big)\leq \sup_{x\in I_0(m, l)_0}\M\big(\phi(x)\big)+\de;$$
\item[$(\rom{4})$] If $m=1$, $\de<\nu_M$, $\phi([0])=\partial[[\Om_0]]$, $\phi([1])=\partial[[\Om_1]]$, then
$$F_A(\ti{\phi})=[[\Om_1-\Om_0]]$$
where $F_A$ is the Almgren's isomorphism (\ref{Almgren isomorphism1}).
\end{itemize}
\end{proposition}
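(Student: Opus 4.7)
The plan is to apply Proposition \ref{technical 1} inductively along the $p$-skeleta of $I(m,l)$ for $p=1,\ldots,m$, in analogy with \cite[Proposition 13.5]{MN12}. The key observation is that at the $p$-th inductive step the dimension parameter passed to Proposition \ref{technical 1} is $p\le n_0+1$, while the grid parameter passed to it is the cumulative refinement produced by the previous steps; both are functions only of $(T,\de,L,n_0)$, so the dependence of $\ep$ and $k$ on the original $l$ and $m$ is eliminated. Before starting I would extend $\phi$ to all of $I(m,l)_0$ by assigning the value $T=\partial[[\Om_T]]$ to every interior vertex $v\in I(m,l)_0\setminus I_0(m,l)_0$---legitimate since $T\in\B^{\F}_s(T)\cap\{\M\le 2L\}$ trivially.

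Inductively, suppose that after step $p-1$ a map $\phi_{p-1}$ is defined on the $0$-cells of the $(p-1)$-skeleton of the refinement $I(m,l+K_{p-1})$, where $K_0=0$ and $K_1,\ldots,K_{p-1}$ come from previous applications of Proposition \ref{technical 1}. At step $p$, for each $p$-cell $\al$ of $I(m,l)$ (identified with $I^p$) the restriction of $\phi_{p-1}$ to $\partial\al$ is a map $I_0(p,K_{p-1})_0\to\B^{\F}_{\rho_{p-1}(s)}(T)$, to which I apply Proposition \ref{technical 1} with parameters $(l,m,T,\de,L)\mapsto(K_{p-1},p,T,\de,L)$, producing an extension on $\al(K_p)_0=I(p,K_p)_0$ with $K_p:=k(K_{p-1},p,T,\de,L)$. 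Property $(\rom{2})$ of Proposition \ref{technical 1} ensures that extensions on adjacent $p$-cells agree on their common face, gluing into a well-defined global $\phi_p$. After step $m$, let $k:=K_m-l$ (refining trivially if necessary so $K_m\ge l$); then $\ti\phi:=\phi_m$ is defined on $I(m,l+k)_0$, and each $K_p$---hence $k$---is determined inductively by $(T,\de,L,n_0)$ alone.

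For the verification: the $m=1$ case of Proposition \ref{technical 1}$(\rom{1})$ at step $p=1$ gives fineness $\le\de$ within each $1$-cell \emph{independently of the input fineness}, which is essential because the interior values $T$ can be far in $\M$-norm from adjacent original values of $\phi$. For $p\ge 2$, Proposition \ref{technical 1}$(\rom{1})$ gives $F_p\le p(F_{p-1}+\de)$; iterating yields $F_m\le b(n_0)\de\le b(n_0)(\f(\phi)+\de)$, and for $m=1$ the single step gives $\f(\ti\phi)\le\de$, establishing $(\rom{1})$. Each step raises the maximum mass by at most $\de/(n_0+1)$ via Proposition \ref{technical 1}$(\rom{3})$, so the total increase over $m\le n_0+1$ steps is $\le\de$---the buffer $\M(T),\M(\phi(x))\le 2L-\de$ being exactly what ensures Proposition \ref{technical 1}'s mass hypothesis at every step---giving $(\rom{3})$. $(\rom{2})$ propagates directly from Proposition \ref{technical 1}$(\rom{2})$. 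For $(\rom{4})$, when $m=1$ the extended map takes value $\phi([0])=\partial[[\Om_0]]$ at $[0]$, then $T=\partial[[\Om_T]]$ at each interior vertex of $I(1,l)_0$, then $\phi([1])=\partial[[\Om_1]]$ at $[1]$; Proposition \ref{technical 1}$(\rom{4})$ applied on each of the $3^l$ consecutive $1$-cells of $I(1,l)$ contributes successively $[[\Om_T-\Om_0]]$, a string of zero terms, and $[[\Om_1-\Om_T]]$, which telescopes to $[[\Om_1-\Om_0]]$. The final $\F$-neighborhood radius is $\rho:=\rho_m\circ\cdots\circ\rho_1$, which still satisfies $\rho(s)\to 0$ as $s\to 0$.

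The main obstacle is keeping all the constants $\ep$, $k$, and $\rho$ independent of $l$ and $m$ beyond the fixed bound $m\le n_0+1$. This is arranged because only $n_0+1$ inductive steps occur and each step's parameters depend solely on $(T,\de,L,n_0)$ and on quantities fixed in earlier steps; consequently the cumulative refinement $K_p$, the cumulative mass increase, and the accumulated fineness all admit bounds depending only on $(T,\de,L,n_0)$. A secondary technical point is the preservation of the ``boundary-of-finite-perimeter'' structure of the currents involved, which holds because Proposition \ref{technical 1}'s output retains the form $\ti\phi(y)=\partial[[\Om_y]]$ with $\Om_y\in\C(M)$ at every step of the induction.
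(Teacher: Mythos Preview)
Your approach is the paper's: assign $T$ to interior vertices of $I(m,l)_0$, then extend inductively along $p$-skeleta via Proposition~\ref{technical 1}. Two corrections are needed. The minor one is bookkeeping: with $K_0=0$ and $K_p=k(K_{p-1},p,T,\de,L)$, each $p$-cell $\al\in I(m,l)_p$ identified with $I^p$ gets refined to level $K_p$, so globally $\phi_m$ lives on $I(m,l+K_m)_0$; hence $k:=K_m$, not $K_m-l$. You should also make explicit that $\ep$ is chosen small enough that $\sup_{s\le\ep}\rho_{p-1}(s)<\ep(K_{p-1},p,T,\de,L)$ for each $p$, so that Proposition~\ref{technical 1} is actually applicable at step $p$.

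The substantive gap is in your verification of $(\rom{2})$. If you apply Proposition~\ref{technical 1} to \emph{every} $p$-cell---including those contained in $\partial I^m$---then the interior vertices of a boundary $p$-cell $\al\subset I_0(m,l)$ acquire genuinely new extension values, not $\phi\circ\n$. After the final step, a vertex $y\in I_0(m,l+k)_0$ traces back via Proposition~\ref{technical 1}$(\rom{2})$ to $\phi_{m-1}$ evaluated somewhere in the $(m-1)$-skeleton of $\partial I^m$, where $\phi_{m-1}$ already carries extension values; so $\ti\phi(y)\neq\phi(\n(l+k,l)(y))$ in general for $m>1$. The fix (exactly what the paper does) is to set $\phi_p:=\phi_{p-1}\circ\n$ on boundary $p$-cells rather than extending them. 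This repairs $(\rom{2})$ but forces $\f(\phi_1)\le\max(\f(\phi),\de)$ on boundary $1$-cells, so your intermediate claim $F_m\le b(n_0)\de$ is false---although the stated conclusion $F_m\le b(n_0)(\f(\phi)+\de)$ still follows from the corrected iteration.
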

\begin{proof}
The case $m=1$ follows directly from Proposition \ref{technical 1}. In fact, take $\ep=\ep(0, 1, T, \de, L)$, $k=k(0, 1, T, \de, L)$ and $\rho(s)=\rho_{(0, 1, T, \de, L)}(s)$ by Proposition \ref{technical 1}, and denote the extension by $\ti{\phi}_1: I(1, k)_0\rightarrow \B^{\F}_{\rho(s)}(T)$. Then $\ti{\phi}: I(1, l+k)_0\rightarrow \B^{\F}_{\rho(s)}(T)$ is given by $\ti{\phi}=\ti{\phi}_1\circ \n(l+k, k)$. The fact that $\ti{\phi}$ satisfies properties (\rom{1})(\rom{2})(\rom{3})(\rom{4}) follows from the fact that $\ti{\phi}_1$ satisfies Proposition \ref{technical 1}(\rom{1})(\rom{2})(\rom{3})(\rom{4}).

Now let us assume that $m>1$. Using notations in Proposition \ref{technical 1}, we can inductively define integers,
$$k_0=0, k_1=k(0, 1, T, \de, L), \cdots, k_i=k(k_{i-1}, i, T, \de, L), \cdots, k_m=k(k_{m-1}, m, T, \de, L);$$
and positive numbers,
$$\ep_1=\ep(0, 1, T, \de, L), \cdots, \ep_i=\ep(k_{i-1}, i, T, \de, L), \cdots, \ep_m=\ep(k_{m-1}, m, T, \de, L);$$
and functions from $\R^1_+$ to $\R^1_+$,
$$\rho_1=\rho_{(0, 1, T, \de, L)}, \cdots, \rho_i=\rho_{(k_{i-1}, i, T, \de, L)}\circ \rho_{i-1}, \cdots, \rho_m=\rho_{(k_{m-1}, m, T, \de, L)}\circ \rho_{m-1}.$$

As $\lim_{s\rightarrow 0}\rho_{(k_{i-1}, i, T, \de, L)}(s)=0$, for $1\leq i\leq m$, we know that $\lim_{s\rightarrow 0}\rho_i(s)=0$, for all $1\leq i\leq m$. Hence we can choose $\ep>0$, such that $\ep\leq \min\{\ep_1, \cdots, \ep_m\}$, and
$$\ti{\ep}_i :=\max_{0\leq s\leq \ep}\rho_i(s)\leq \ep_{i+1},\quad \textrm{for all }1\leq i\leq m-1.$$
Let $k=k_m$, and $\rho=\rho_m$; then $\ep, k, \rho$ depend only on $T, \de, L$. In the following, we will show that $\ep, k, \rho$ satisfy the requirement.

Fix a map $\phi: I_0(m, l)_0\rightarrow \B^{\F}_s(T)\cap\{S: \M(S)\leq 2L-\de\}$, with $\phi(x)=\partial[[\Om_x]]$, $\Om_x\in\C(M)$, for all $x\in I_0(m, l)_0$. Assume that $s\leq \ep$. Given $p\leq m$, let $V_p$ be the set of vertices of $I(m, l+k_p)$ that belong to the $p$-skeleton of $I(m, l)$, i.e. $V_p=\cup_{\al\in I(m, l)_p}\al(k_p)_0$. Clearly $V_m=I(m, l+k)_0$. Say a map $\phi_p: V_p\rightarrow \B^{\F}_{\rho_p(s)}(T)\cap\{S: \M(S)\leq 2L\}$ is a $p$-extension of $\phi$, if:
\begin{enumerate}
\vspace{-5pt}
\setlength{\itemindent}{1em}
\addtolength{\itemsep}{-0.7em}
\item $\phi_p(y)=\partial[[\Om_y]]$, $\Om_y\in\C(M)$, for all $y\in V_p$;

\item $\phi_p=\phi\circ \n(l+k_p, l)$ on $V_p \cap I_0(m, l+k_p)_0$;

\item If $p=1$, then $\f(\phi_p)\leq \f(\phi)+\de$; if $p>1$, there exists a $(p-1)$-extension $\phi_{p-1}$ of $\phi$, such that
$$\f(\phi_p)\leq p\big(\f(\phi_{p-1})+\de\big);$$

\item $\sup_{y\in V_p}\M\big(\phi_p(y)\big)\leq \sup_{x\in I_0(m, l)_0}\M\big(\phi(x)\big)+\frac{p\de}{n_0+1}$.
\end{enumerate}

We start with the construction of $1$-extension $\phi_1$ of $\phi$. First construct a trivial extension of $\phi$ to $I(m, l)_0$, i.e. $\phi_0: I(m, l)_0\rightarrow \B^{\F}_{s}(T)\cap\{S: \M(S)\leq 2L-\de\}$ by
\begin{displaymath}
\begin{split}
& \phi_0(x)=\phi(x), \quad x\in I_0(m, l)_0;\\
& \phi_0(x)=T,\quad\quad x\notin I_0(m, l)_0.
\end{split}
\end{displaymath}
Then we can construct $\ti{\phi}_0: V_1\rightarrow \B^{\F}_{\rho_1(s)}(T)$ as follows: given $\al\in I(m, l)_1$, $\ti{\phi}_0|_{\al(k_1)_0}$ is gotten by extending $\phi_0|_{\al_0}$ on $\al_0$ to $\al(k_1)_0$ using Proposition \ref{technical 1} for $l=0, m=1, T, \de, L$ as $s\leq \ep\leq \ep_1$. Finally, we can define $\phi_1: V_1\rightarrow \B^{\F}_{\rho_1(s)}(T)$ by
\begin{displaymath}
\begin{split}
& \phi_1=\phi_0\circ \n(l+k_1, l), \quad \textrm{on $\al(k_1)_0$, if $\al$ is a 1-cell of $I_0(m, l)$};\\
& \phi_1=\ti{\phi}_0,\quad\quad\quad \textrm{on $\al(k_1)_0$, if $\al$ is not a 1-cell of $I_0(m, l)$}.
\end{split}
\end{displaymath}
It is easy to check that $\phi_1$ is a $1$-extension of $\phi$.

To get $p$-extension  inductively, we need the following lemma.
\begin{lemma}\label{p-extension lemma}
Given a $p$-extension $\phi_p$ of $\phi$, $p\leq m-1$, there exists a $(p+1)$-extension $\phi_{p+1}$ of $\phi$.
\end{lemma}
\begin{proof}
By assumption $\phi_p$ maps $V_p$ into $\B^{\F}_{\rho_p(s)}(T)\cap \{S: \M(S)\leq 2L\}$, so the image of $\phi_p$ also lie in $\B^{\F}_{\ep_{p+1}}(T)\cap \{S: \M(S)\leq 2L\}$ as $\rho_p(s)\leq \ti{\ep}_p\leq \ep_{p+1}$. Using the fact that $\phi_p(x)=\partial[[\Om_x]]$, $\Om_x\in\C(M)$ for all $x\in V_p$, we can apply Proposition \ref{technical 1} for each $(p+1)$-cell $\al\in I(m, l)_{p+1}$ to extend $\phi_p|_{\al_0(k_p)_0}$ to $\ti{\phi}_{p, \al}: \al(k_{p+1})_0\rightarrow \B^{\F}_{\rho_{p+1}(s)}(T)$ for $l=k_p, m=p+1, T, \de, L$. Given any two adjacent $(p+1)$-cells $\al, \bar{\al}\in I(m, l)_{p+1}$, by Proposition \ref{technical 1}(\rom{2}), $\ti{\phi}_{p, \al}=\ti{\phi}_{p, \bar{\al}}=\phi_p \circ\n(k_{p+1}, k_p)$ on $\al(k_{p+1})_0\cap \bar{\al}(k_{p+1})_0$, so we can construct a map
$$\ti{\phi}_p: V_{p+1}\rightarrow \B^{\F}_{\rho_{p+1}(s)}(T),$$
by letting $\ti{\phi}_p=\ti{\phi}_{p, \al}$ on each $\al(k_{p+1})$, $\al\in I(m, l)_{p+1}$.
By Proposition \ref{technical 1}(\rom{1})(\rom{3}) and the inductive hypothesis 4, 
\begin{displaymath}
\begin{split}
& \quad\quad\f(\ti{\phi}_p)\leq (p+1)\big(\f(\phi_p)+\de\big);\\
& \sup_{x\in V_{p+1}}\M\big(\ti{\phi}_p(x)\big)\leq \sup_{x\in V_p}\M\big(\phi_p(x)\big)+\frac{\de}{n_0+1}\leq \sup_{x\in I_0(m, l)_0}\M\big(\phi(x)\big)+\frac{(p+1)\de}{n_0+1}.
\end{split}
\end{displaymath}
Finally we define $\phi_{p+1}: V_{p+1}\rightarrow \B^{\F}_{\rho_{p+1}(s)}(T)$ by
\begin{displaymath}
\begin{split}
& \phi_{p+1}=\phi_p\circ \n(l+k_{p+1}, l+k_p), \quad \textrm{on $\al(k_{p+1})_0$, if $\al$ is a $(p+1)$-cell of $I_0(m, l)$};\\
& \phi_{p+1}=\ti{\phi}_p,\quad\quad\quad \textrm{on $\al(k_{p+1})_0$, if $\al$ is not a $(p+1)$-cell of $I_0(m, l)$}.
\end{split}
\end{displaymath}
Now we check that $\phi_{p+1}$ satisfies all the requirements for a $(p+1)$-extension of $\phi$. First, by construction $\phi_{p+1}(x)=\partial[[\Om_x]]$, $\Om_x\in\C(M)$, for all $x\in V_{p+1}$; second, given a $(p+1)$-cell $\al$ in $I_0(m, l)$, by inductive hypothesis 2, $\phi_{p+1}=\phi_p\circ \n(l+k_{p+1}, l+k_p)=\phi\circ \n(l+k_p, l)\circ \n(l+k_{p+1}, l+k_p)=\phi\circ\n(l+k_{p+1}, l)$ on $\al(k_{p+1})_0$; lastly, as $\phi_{p+1}$ is gotten by replacing $\ti{\phi}_p$ by $\phi_p\circ \n(l+k_{p+1}, l+k_p)$ on $V_{p+1}\cap I_0(m, l+k_{p+1})_0$, hence $\f(\phi_{p+1})\leq \f(\ti{\phi}_p)\leq (p+1)\big(\f(\phi_p)+\de\big)$, and $\sup_{x\in V_{p+1}}\M\big(\phi_{p+1}(x)\big)\leq \sup_{x\in V_{p+1}}\M\big(\ti{\phi}_p(x)\big)\leq \sup_{x\in I_0(m, l)_0}\M\big(\phi(x)\big)+\frac{(p+1)\de}{n_0+1}$.
\end{proof}

We can then inductively construct an $m$-extension $\phi_m: I(m, l+k_m)_0\rightarrow \B^{\F}_{\rho_m(s)}(T)$. Let $\ti{\phi}=\phi_m$; then it is easy to see that $\ti{\phi}, \ep, k=k_m, \rho=\rho_m$ satisfy all the requirements of Proposition \ref{technical 2}.
\end{proof}

\subsection{Proof of Theorem \ref{discretization and identification}.}
The idea is briefly as follows. Denote
\begin{displaymath}
L(\Phi)=\max_{x\in[0, 1]}\M\big(\Phi(x)\big).
\end{displaymath}
Given a $\de>0$, we can cover the set $\Z_n(M^{n+1})\cap\{S: \M(S)\leq 2L(\Phi)\}\cap\{S: S=\partial[[\Om]]: \Om\in\C(M)\}$ by finitely many balls $\{\B_{\ep_i}^{\F}(T_i)\}_{i=1}^N$, such that Proposition \ref{technical 2} can be applied on each ball for $n_0=1, T_i, \de, L=L(\Phi)$\footnote{Note that $n_0=1$ is the dimension of parameter space.}. Take $j$ large enough, such that for each $1$-cell $\al\in I(1, j)_1$, the image $\Phi(\al)$ lie in some $\B^{\F}_{\ep_i}(T_i)$; then we can apply Proposition \ref{technical 2} to each $\Phi|_{\al_0}$, and construct a discrete map $\phi_\de$ which has fineness controlled by $\de$, and total mass bounded by $L(\Phi)+\de$. Finally, taking a sequence $\de_i\rightarrow 0$, $i\rightarrow \infty$, we can construct the desired $(1, \M)$-homotopy sequence $\{\phi_i\}_{i\in\N}$ by letting $\phi_i=\phi_{\de_i}$. Detailed argument is given as below.

\begin{proof}
(of Theorem \ref{discretization and identification}) In this part, we will repeatedly use notations and conclusions in Proposition \ref{technical 2} for $n_0=1$\footnote{Again, $n_0=1$ is the dimension of parameter space.}.

\vspace{0.5em}
\noindent{\bf Step \Rom{1}:} Fix $\de>0$, such that $L=L(\Phi)<2L-2\de$. By the weak compactness of the set $\Z_n(M^{n+1})\cap\{S: \M(S)\leq 2L\}\cap\{S: S=\partial[[\Om]]: \Om\in\C(M)\}$ (see \cite[\S 37.2]{Si83}\cite[\S 1.20]{Gi}), we can find a finite covering by balls $\big\{\B^{\F}_{\ep_i}(T_i): i=1, \cdots, N\big\}$, such that $T_i=\partial[[\Om_i]]$, $\Om_i\in\C(M)$, $\M(T_i)\leq 2L$, and
\begin{equation}\label{smallness of radius of the covering}
3\ep_i+\sup_{0\leq s\leq 3\ep_i}\rho_i(s)<\ep(T_i, \de, L).
\end{equation}
where $\ep(T_i, \de, L)$, $k_i=k(T_i, \de, L)$ and $\rho_i(s)=\rho_{(T_i, \de, L)}(s)$ are given by Proposition \ref{technical 2}. Assume that $\ep_1\leq \ep_2\leq \cdots \leq \ep_N\leq \de$, and denote $k=\max\{k_i: 1\leq i\leq N\}$.

By the continuity of $\Phi$ under the flat topology, we can take $j\in\N$ large enough, such that for any $\al\in I(1, j)_1$,
\begin{equation}\label{small fineness wrt flat norm}
\sup_{x, y\in \al}\F\big(\Phi(x)-\Phi(y)\big)<\ep_1<\de.
\end{equation}
Define $c: I(1, j)_0\rightarrow \{1, \cdots, N\}$ by $c(x)=\sup\{i: \Phi(x)\in \B^{\F}_{\ep_i}(T_i)\}$. Then define
\begin{displaymath}
c: I(1, j)_1\rightarrow\{1, \cdots, N\},
\end{displaymath}
by $c(\al)=\sup\{c(x): x\in \al_0\}$.
\begin{claim}\label{image of Phi}
$\Phi(\al)\subset \B^{\F}_{2\ep_{c(\al)}}(T_{c(\al)})$.
\end{claim}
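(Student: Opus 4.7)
The proof of this claim should be essentially immediate from the definitions and the continuity estimate (5.8), with no real obstacle. The plan is to exploit the fact that $c(\alpha)$ is defined as the supremum over vertices, so there exists a vertex at which $\Phi$ is already close to $T_{c(\alpha)}$, and then to propagate this closeness across the whole $1$-cell by the flat-continuity bound.

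More precisely, first I would pick a vertex $x_0 \in \alpha_0$ realizing the supremum, i.e.\ $c(x_0) = c(\alpha)$. By the definition of $c$ on vertices, this gives
\[
\Phi(x_0) \in \B^{\F}_{\ep_{c(\alpha)}}(T_{c(\alpha)}), \qquad \textrm{so}\qquad \F\big(\Phi(x_0) - T_{c(\alpha)}\big) < \ep_{c(\alpha)}.
\]
Next, for an arbitrary $x \in \alpha$, the continuity estimate (\ref{small fineness wrt flat norm}) applied to the $1$-cell $\al$ yields
\[
\F\big(\Phi(x) - \Phi(x_0)\big) < \ep_1 \leq \ep_{c(\alpha)},
\]
where the last inequality uses the ordering $\ep_1 \leq \cdots \leq \ep_N$ set up just above the claim.

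Combining these two bounds by the triangle inequality for the flat norm gives
\[
\F\big(\Phi(x) - T_{c(\alpha)}\big) \leq \F\big(\Phi(x) - \Phi(x_0)\big) + \F\big(\Phi(x_0) - T_{c(\alpha)}\big) < \ep_1 + \ep_{c(\alpha)} \leq 2\ep_{c(\alpha)},
\]
so $\Phi(x) \in \B^{\F}_{2\ep_{c(\alpha)}}(T_{c(\alpha)})$, which is exactly the claim. There is no genuine difficulty here; the claim is really a bookkeeping lemma that records the price paid for replacing the vertex-wise ball by a single ball containing all of $\Phi(\alpha)$. Its purpose, presumably in the sequel, is to let Proposition \ref{technical 2} be applied once per $1$-cell with center $T_{c(\alpha)}$ and radius $2\ep_{c(\alpha)}$, which by (\ref{smallness of radius of the covering}) still lies safely inside the admissible region $\B^{\F}_{\ep(T_{c(\alpha)},\de,L)}(T_{c(\alpha)})$.
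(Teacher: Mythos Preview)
Your proof is correct and essentially identical to the paper's own proof: both pick a vertex $x_0 \in \al_0$ with $c(x_0) = c(\al)$, use the definition of $c$ to place $\Phi(x_0)$ in $\B^{\F}_{\ep_{c(\al)}}(T_{c(\al)})$, and then invoke (\ref{small fineness wrt flat norm}) together with $\ep_1 \leq \ep_{c(\al)}$ and the triangle inequality. Your version is slightly more explicit in writing out the triangle inequality, but there is no substantive difference.
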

\begin{proof}
By definition, there exists $x\in\al_0$, such that $c(\al)=c(x)$, then $\Phi(x)\in \B^{\F}_{\ep_{c(\al)}}(T_{c(\al)})$. By (\ref{small fineness wrt flat norm}), for any $y\in\al$, $\Phi(y)\in \B^{\F}_{\ep_1}\big(\Phi(x)\big)\subset\B^{\F}_{2\ep_{c(\al)}}(T_{c(\al)})$, as $\ep_1\leq \ep_{c(\al)}$.
\end{proof}

\vspace{5pt}
Let $\phi_0: I(1, j)_0\rightarrow\Z_n(M)$ be the restriction of $\Phi$ to $I(1, j)_0$, then $\phi_0(\al_0)\subset \B^{\F}_{2\ep_{c(\al)}}(T_{c(\al)})$ for all $\al\in I(1, j)_1$. By (\ref{smallness of radius of the covering}) and Theorem \ref{discretization and identification}(a), we can apply Proposition \ref{technical 2} to each $\phi_0|_{\al_0}$, $\al\in I(1, j)_1$, and get
$$\ti{\phi}_{0, \al}: \al(k_{c(\al)})_0\rightarrow \B^{\F}_{\rho_{c(\al)}(2\ep_{c(\al)})}(T_{c(\al)}).$$
Define $\phi_{\de}: I(1, j+k)_0\rightarrow \Z_n(M)$ by
$$\phi_\de=\ti{\phi}_{0, \al}\circ \n(j+k, j+k_{c(\al)}), \quad \textrm{ on } \al(k)_0.$$
Now we collect a few properties of $\phi_{\de}$.
\begin{enumerate}
\vspace{-5pt}
\setlength{\itemindent}{1em}
\addtolength{\itemsep}{-0.7em}

\item $\phi_{\de}=\Phi$ on $I(1, j)_0$;

\item $\phi_{\de}(x)=\partial [[\Om_x]]$, $\Om_x\in\C(M)$, for all $x\in I(1, j+k)_0$;

\item $\f(\phi_{\de})\leq \de$;

\item For any $\al\in I(1, j)_1$,
$$\sup_{x\in \al(k)_0}\M\big(\phi_{\de}(x)\big)\leq \sup_{x\in \al_0}\M\big(\Phi(x)\big)+\de<2L-\de;$$

\item $\sup\big\{\F\big(\phi_{\de}(x)-\Phi(x)\big): x\in I(1, j+k)_0\big\}\leq \de$;

\item If $\de<\nu_M$, then $F_A(\phi_{\de})=[[\Om_1-\Om_0]]$, where $\Phi(0)=\partial[[\Om_0]]$, $\Phi(1)=\partial[[\Om_1]]$.
\end{enumerate}
1 is by construction. 2,3,4,6 directly come from Proposition \ref{technical 2}. 5 comes from (\ref{smallness of radius of the covering}), and the fact that $\phi_{\de}(\al(k)_0)\subset \B^{\F}_{\rho_{c(\al)}(2\ep_{c(\al)})}(T_{c(\al)})$, $\Phi(\al)\subset \B^{\F}_{2\ep_{c(\al)}}(T_{c(\al)})$.

\vspace{0.5em}
\noindent{\bf Step \Rom{2}:}
We say $\bar{\phi}: I(1, \bar{k})_0\rightarrow \Z_n(M)$ is a $(\de, \bar{k})$-extension of $\Phi$, $\bar{k}\geq j+k$, if
\begin{enumerate}
\vspace{-5pt}
\setlength{\itemindent}{1em}
\addtolength{\itemsep}{-0.7em}

\item $\bar{\phi}=\Phi$ on $I(1, j)_0$;

\item $\bar{\phi}(x)=\partial [[\Om_x]]$, $\Om_x\in\C(M)$, for all $x\in I(1, \bar{k})_0$;

\item $\f(\bar{\phi})\leq \de$;

\item For any $\al\in I(1, j)_1$, 
$$\sup_{x\in \al(\bar{k}-j)_0}\M\big(\bar{\phi}(x)\big)\leq  \sup_{x\in \al}\M\big(\Phi(x)\big)+\de<2L-\de;$$

\item $\sup\big\{\F\big(\phi_{\de}(x)-\Phi(x)\big): x\in I(1, \bar{k})_0\big\}\leq \ep_1$.
\end{enumerate}

The following lemma says that a $(\de, \bar{k})$-extension $\bar{\phi}$ is $1$-homotopic to $\phi_{\de}$ with fineness $\de$.
\begin{lemma}\label{1-homotopic lemma}
Given a $(\de, \bar{k})$-extension $\bar{\phi}$ of $\Phi$, with $\bar{k}\geq j+k$, then there exists
$$\psi: I(1, \hat{k})_0\times I(1, \hat{k})_0\rightarrow \Z_n(M),$$
with $\hat{k}=\bar{k}+k$, such that
\begin{itemize}
\vspace{-5pt}
\setlength{\itemindent}{1em}
\addtolength{\itemsep}{-0.7em}

\item[$(a)$] $\psi(y, x)=\partial[[\Om_{y, x}]]$, $\Om_{y, x}\in \C(M)$, for any $(y, x)\in I(1, \hat{k})_0\times I(1, \hat{k})_0$;

\item[$(b)$] $\psi([0], \cdot)=\phi_{\de}\circ \n(\hat{k}, j+k)$, and $\psi([1], \cdot)=\bar{\phi}\circ \n(\hat{k}, \bar{k})$;

\item[$(c)$] $\f(\psi)\leq c_0 \de$, for a fixed constant $c_0$;

\item[$(d)$] $\M\big(\psi(y, x)\big)\leq \sup\big\{\M\big(\Phi(x^{\pr})\big), x, x^{\pr} \textrm{ lie in some common 1-cell }\al\in I(1, j)_1\big\}+2\de$, for any $(y, x)\in I(1, \hat{k})_0\times I(1, \hat{k})_0$.
\end{itemize}
\end{lemma}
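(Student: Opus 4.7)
The plan is to build $\psi$ by filling in, for each $1$-cell $\al \in I(1,j)_1$, the sub-grid $C_\al := I(1,\hat{k})_0 \times \al(\hat{k}-j)_0$ of $I(1,\hat{k})_0 \times I(1,\hat{k})_0$ via Proposition \ref{technical 2} applied with $m=2$, $T = T_{c(\al)}$, and $n_0 = 2$, and then gluing the pieces along their common sides. Using $n_0 = 2$ in Step~I only tightens the covering $\{\B^{\F}_{\ep_i}(T_i)\}$ slightly and does not otherwise affect the preceding argument, so we assume this from the outset. On $\partial C_\al$ (after the combinatorial identification of $\al(\hat{k}-j)$ with an affinely scaled copy of $I(1, \cdot)$) define the boundary map $\psi^{\partial}_\al$ to be $\phi_\de \circ \n(\hat{k}, j+k)$ on the top edge $\{[0]\} \times \al(\hat{k}-j)_0$, $\bar{\phi} \circ \n(\hat{k}, \bar{k})$ on the bottom edge $\{[1]\} \times \al(\hat{k}-j)_0$, and the constant current $\Phi(p_i)$ on each of the two side edges $I(1,\hat{k})_0 \times \{p_i\}$, where $p_0, p_1 \in I(1,j)_0$ are the endpoints of $\al$. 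This is consistent at the four corners, since both $\phi_\de$ and $\bar{\phi}$ coincide with $\Phi$ on $I(1,j)_0 \ni p_0, p_1$ by property~1 of the respective extensions.

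Next I verify that $\psi^{\partial}_\al$ meets the hypotheses of Proposition \ref{technical 2}: by Claim \ref{image of Phi}, the range control on $\phi_\de$ and $\bar{\phi}$, and (\ref{smallness of radius of the covering}), the image lies in $\B^{\F}_{s_\al}(T_{c(\al)})$ for some $s_\al < \ep(T_{c(\al)}, \de, L)$; each value is of the form $\partial[[\Om]]$ with $\Om \in \C(M)$ by properties~2 of the extensions; all masses are bounded by $2L-\de$ by properties~4; and the fineness of $\psi^{\partial}_\al$ is at most $\max\{\f(\phi_\de), \f(\bar{\phi})\} \leq \de$, thanks to the constant side edges. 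Proposition \ref{technical 2} then produces an extension $\ti\psi_\al$ on the whole $2$-cell $C_\al$ satisfying: $\f(\ti\psi_\al) \leq b(2)(\de + \de) = 2b(2)\de =: c_0 \de$ by (\rom{1}); $\sup_{C_\al} \M(\ti\psi_\al) \leq \sup_{\partial C_\al} \M(\psi^{\partial}_\al) + \de \leq \sup_{x' \in \al} \M(\Phi(x')) + 2\de$ by (\rom{3}) combined with properties~4; and each value of the form $\partial[[\Om]]$ with $\Om \in \C(M)$ by~(0). These give (a), (c), (d) on each piece.

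Finally the local extensions $\{\ti\psi_\al\}_{\al \in I(1,j)_1}$ must be glued into a single $\psi$. Two adjacent $2$-cells $C_\al, C_{\al'}$ share a side $I(1,\hat{k})_0 \times \{p\}$, where $p \in I(1,j)_0$ is their common endpoint; on this shared side the prescribed boundary data for both cells is the constant current $\Phi(p)$. Proposition \ref{technical 2}(\rom{2}) guarantees that $\ti\psi_\al$ restricts on $\partial C_\al$ to $\psi^{\partial}_\al$ composed with the appropriate refinement map $\n$, and similarly for $\ti\psi_{\al'}$; the two restrictions therefore agree on the shared side, and the local pieces glue to a well-defined global $\psi : I(1,\hat{k})_0 \times I(1,\hat{k})_0 \to \Z_n(M)$. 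Condition (b) is immediate from the prescription of the top and bottom boundary data, and the estimates (a), (c), (d) pass to the global $\psi$ since any pair of adjacent vertices either lies in a single $C_\al$ or on a shared constant side.

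The main technical obstacle is the combinatorial reconciliation between the two different natural refinements of the factors of $C_\al$ (namely $\hat{k}$ in the homotopy-time direction but $\hat{k}-j$ in the parameter direction) and the uniform $I(2,l)$ domain demanded by Proposition \ref{technical 2}; as in \cite[Lemma 13.4]{MN12}, one handles this by rescaling each strip $C_\al$ to a standard $2$-cell before invoking the proposition. Once this bookkeeping is in place, everything else is a direct application of the technical results already proved.
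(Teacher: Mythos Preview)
Your proposal is correct and follows essentially the same strategy as the paper: for each $\al\in I(1,j)_1$ set up boundary data on a $2$-cell using $\phi_\de$, $\bar{\phi}$, and the constant $\Phi$ on the vertical sides, apply Proposition~\ref{technical 2} with $m=2$, and glue. Two minor remarks. First, your switch to $n_0=2$ is unnecessary: the paper works with $n_0=1$, and since Proposition~\ref{technical 2} only requires $m\leq n_0+1$, the case $m=2$ is already covered; the constant you call $c_0$ is simply $2b(1)$. Second, the paper sidesteps the ``combinatorial reconciliation'' you flag at the end by a cleaner device: rather than taking the full interval $[0,1]$ in the homotopy direction, it takes the single $1$-cell $[0,\tfrac{1}{3^j}]$ (at level $j$) and extends $\psi$ trivially to $[\tfrac{1}{3^j},1]$ by $\bar{\phi}\circ\n(\hat{k},\bar{k})$. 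Then each $\be=[0,\tfrac{1}{3^j}]\otimes\al$ is a genuine $2$-cell of the cell complex $[0,\tfrac{1}{3^j}]\times I(1,j)$ with both factors at the same scale, so Proposition~\ref{technical 2} applies directly at refinement level $\bar{k}-j$ with no rescaling needed, and the output lands at level $\bar{k}-j+k_{c(\al)}\leq \hat{k}-j$ as desired.
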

\begin{proof}
Given $\al\in I(1, j)_1$, using property 5 for $(\de, \bar{k})$-extension and the fact that $\Phi(\al)\subset \B^{\F}_{2\ep_{c(\al)}}(T_{c(\al)})$, we have $\bar{\phi}\big(\al\cap I(1, \bar{k})_0\big)\subset \B^{\F}_{3\ep_{c(\al)}}(T_{c(\al)})$.

We will first construct $\psi$ on $[0, \frac{1}{3^j}](\hat{k}-j)_0\times I(1, \hat{k})_0$\footnote{Notice that $[0, \frac{1}{3^j}](\hat{k}-j)_0=[0, \frac{1}{3^j}]\cap I(1, \hat{k})_0$.}, such that $\psi$ satisfies:
\begin{displaymath}
\psi([0], \cdot)=\phi_{\de}\circ \n(\hat{k}, j+k);\quad \psi([\frac{1}{3^j}], \cdot)=\bar{\phi}\circ \n(\hat{k}, \bar{k}),
\end{displaymath}
and Lemma \ref{1-homotopic lemma}(a)(c)(d), where in (d) $(y, x)\in [0, \frac{1}{3^j}](\hat{k}-j)_0\times I(1, \hat{k})_0$. Then we can extend $\psi$ to 
$\big([\frac{1}{3^j}, 1]\cap I(1, \hat{k})_0\big)\times I(1, \hat{k})_0$ trivially by letting $\psi(y, x)=\bar{\phi}\circ \n(\hat{k}, \bar{k})(x)$ for $(y, x)\in \big([\frac{1}{3^j}, 1]\cap I(1, \hat{k})_0\big)\times I(1, \hat{k})_0$.

Let $W_1$ be the set of vertices of $[0, \frac{1}{3^j}](\bar{k}-j)_0\times I(1, \bar{k})_0$ which belong to the $1$-skeleton of $[0, \frac{1}{3^j}]\times I(1, j)$ (think $[0, \frac{1}{3^j}]\cong I(1, 0)$), and define $\psi_0: W_1\rightarrow \Z_n(M)$ by:
\begin{displaymath}
\begin{split}
& \psi_0([0], \cdot)=\phi_\de\circ \n(\bar{k}, j+k);\quad  \psi_0([\frac{1}{3^j}], \cdot)=\bar{\phi};\\
& \psi_0(\cdot, x)\equiv \Phi(x),\quad \textrm{ for all } x\in I(1, j)_0.
\end{split}
\end{displaymath}
Then $\psi_0$ satisfies:
\begin{enumerate}
\vspace{-5pt}
\setlength{\itemindent}{0.5em}
\addtolength{\itemsep}{-0.7em}

\item $\f(\psi_0)\leq \max\{\f(\phi_{\de}), \f(\bar{\phi})\}\leq \de$, as $\phi_{\de}|_{I(1, j)_0}=\bar{\phi}|_{I(1, j)_0}=\Phi$;

\item Given any $2$-cell $\be$ in $[0, \frac{1}{3^j}]\times I(1, j)$, with $\be=[0, \frac{1}{3^j}]\otimes \al$, for some $\al \in I(1, j)_1$, then $\psi_0$ maps $\be_0(\bar{k}-j)_0$\footnote{Here $\be_0(\bar{k}-j)_0=\be_0\cap I(1, \bar{k})_0\times I(1, \bar{k})_0$.} into $\B^{\F}_{\rho_{c(\al)}(2\ep_{c(\al)})+3\ep_{c(\al)}}(T_{c(\al)})$, so $\psi_0\big(\be_0(\bar{k}-j)_0\big)\subset\B^{\F}_{\ep(T_{c(\al)}, \de, L)}(T_{c(\al)})$ by (\ref{smallness of radius of the covering});

$--$This is because $\Phi(\al)\subset \B^{\F}_{2\ep_{c(\al)}}(T_{c(\al)})$, $\phi_{\de}\big(\al(k)_0\big)\subset \B^{\F}_{\rho_{c(\al)}(2\ep_{c(\al)})}(T_{c(\al)})$, and $\bar{\phi}\big(\al(\bar{k}-j)_0\big)\subset \B^{\F}_{3\ep_{c(\al)}}(T_{c(\al)})$\footnote{Here $\al(\bar{k}-j)_0=\al\cap I(1, \bar{k})_0$.}.

\item $\psi_0(y, x)=\partial [[\Om_{y, x}]]$, $\Om_{y, x}\in\C(M)$, for all $(y, x)\in W_1$;

$--$This comes from property 2 of $\phi_{\de}$ and $\bar{\phi}$.

\item $\sup_{(y, x)\in W_1}\M\big(\psi_0(y, x)\big)\leq \max\big\{\sup_{I(1, j+k)_0}\M(\phi_{\de}), \sup_{I(1, \bar{k})_0}\M(\bar{\phi})\big\}\leq 2L-\de$;

$--$The last $``\leq"$ comes from property 4 of $\phi_{\de}$ and $\bar{\phi}$.
\end{enumerate}


Therefore we can apply Proposition \ref{technical 2} for each $2$-cell $\be=[0, \frac{1}{3^j}]\otimes \al$ in $[0, \frac{1}{3^j}]\times I(1, j)$ to extend $\psi_0|_{\be_0(\bar{k}-j)_0}$ to
\begin{displaymath}
\ti{\psi}_{0, \be}: \be(\bar{k}-j+k_{c(\al)})_0\rightarrow \Z_n(M),
\end{displaymath}
which satisfies:
\begin{itemize}
\vspace{-5pt}
\setlength{\itemindent}{1em}
\addtolength{\itemsep}{-0.7em}

\item[$(a)$] $\f(\ti{\psi}_{0, \be})\leq b(1)\big(\f(\psi_0)+\de\big)\leq 2b(1)\de$;

\item[$(b)$] $\ti{\psi}_{0, \be}([0], \cdot)=\psi_0([0], \cdot)\circ \n(\bar{k}-j+k_{c(\al)}, \bar{k}-j)=\phi_{\de}\circ\n(\bar{k}-j+k_{c(\al)}, k)$ on $\al(\bar{k}-j+k_{c(\al)})_0$,

and $\ti{\psi}_{0, \be}([\frac{1}{3^j}], \cdot)=\psi_0([\frac{1}{3^j}], \cdot)\circ \n(\bar{k}-j+k_{c(\al)}, \bar{k}-j)=\bar{\phi}\circ\n(\bar{k}-j+k_{c(\al)}, \bar{k}-j)$ on $\al(\bar{k}-j+k_{c(\al)})_0$;

\item[$(c)$]
\begin{displaymath}
\begin{split}
\sup_{\be(\bar{k}-j+k_{c(\al)})_0}\M(\ti{\psi}_{0, \be}) &\leq \sup_{\be_0(\bar{k}-j)_0}\M(\psi_0)+\de\leq \max\big\{\sup_{I(1, j+k)_0}\M(\phi_{\de}), \sup_{I(1, \bar{k})_0}\M(\bar{\phi})\big\}+\de\\
                                                                                   &\leq \sup_{x\in\al}\M\big(\Phi(x)\big)+2\de.
\end{split}
\end{displaymath}
\end{itemize}
Also given any two adjacent $2$-cells $\be=[0, \frac{1}{3^j}]\otimes \al$ and $\bar{\be}=[0, \frac{1}{3^j}]\otimes\bar{\al}$ in $[0, \frac{1}{3^j}]\times I(1, j)$, by Proposition \ref{technical 2}(\rom{2}), we know that $\ti{\psi}_{0, \be}\circ \n(\hat{k}-j, \bar{k}-j+k_{c(\al)})=\ti{\psi}_{0, \bar{\be}}\circ \n(\hat{k}-j, \bar{k}-j+k_{c(\bar{\al})})=\psi_0\circ\n(\hat{k}-j, \bar{k}-j)$ on $\be(\hat{k}-j)_0\cap \bar{\be}(\hat{k}-j)_0$, so we can put all $\{\ti{\psi}_{0, \be}\}$ together and construct the desired map
$$\psi: [0, \frac{1}{3^j}](\hat{k}-j)_0\times I(1, \hat{k})_0\rightarrow \Z_n(M)$$
by letting $\psi=\ti{\psi}_{0, \be}\circ \n(\hat{k}-j, \bar{k}-j+k_{c(\al)})$ on $\be(\hat{k}-j)_0$ for each $2$-cell $\be=[0, \frac{1}{3^j}]\otimes \al$. It is straightforward to check that $\psi$ satisfies the requirement.
\end{proof}

Now let us go back to finish the proof of Theorem \ref{discretization and identification}. Take a sequence of positive numbers $\{\de_i\}$, $\de_i\rightarrow 0$, as $i\rightarrow\infty$; then by Step \Rom{1}, we can construct a sequence of mappings $\{\phi_i\}$, with $\phi_i=\phi_{\de_i/c_0}: I(1, j_i+k_i)_0\rightarrow\Z_n(M)$\footnote{$c_0$ is given in Lemma \ref{1-homotopic lemma}(c).}. After extracting a subsequence, we can assume that $\phi_{i+1}$ is a $(\de_i, j_{i+1}+k_{i+1})$-extension of $\Phi$. Then we can apply Lemma \ref{1-homotopic lemma} to $\phi_i$ and $\phi_{i+1}$, so as to construct $\psi_i$ satisfying Theorem \ref{discretization and identification}(\rom{2}). The fact that $\phi_i$ satisfy Theorem \ref{discretization and identification}(\rom{1})(\rom{3})(\rom{4}) come from properties 4,5,6 of $\phi_{\de}$ in Part \Rom{1}.
\end{proof}


\section{Proof of the main theorem}\label{proof of the main theorem}

The main idea for proving Theorem \ref{main theorem1} is to apply the Almgren-Pitts min-max theory to the good families constructed in \S \ref{min-max family}, so that we can obtain an optimal minimal hypersurface satisfying the requirement. The idea is similar to the proof of \cite[Theorem 1.1]{Z12}, while we need a more delicate comparison argument when checking the min-max hypersurface has index one (c.f. Claim \ref{index bound claim}).

Given $\Si\in\mS$ (\ref{space of minimal surfaces}), we can define a mapping into $\big(\Z_n(M^{n+1}), \{0\}\big)$
\begin{equation}\label{Phi_Simga}
\Phi^{\Si}: [0, 1]\rightarrow \big(\Z_n(M^{n+1}), \{0\}\big)
\end{equation}
as follows:
\begin{itemize}
\vspace{-5pt}
\setlength{\itemindent}{1em}
\addtolength{\itemsep}{-0.7em}

\item[(\rom{1})] When $\Si\in\mS_+$ (\ref{S+}), let $\Phi^{\Si}(x)=\partial[[\Om_x]]$, where $\Om_x=\{p\in M: d^{\Si}_{\pm}(p)\leq (2x-1)d(M)\}$. Here $d^{\Si}_{\pm}$ is the signed distance function (\ref{signed distance}), and $d(M)$ is the diameter of $M$.

\item[(\rom{2})] When $\Si\in\mS_-$ (\ref{S-}), let $\Phi^{\Si}(x)=\partial[[\Om_x]]$, where $\Om_x=\{p\in M: d^{\Si}(p)\leq xd(M)\}$. Here $d^{\Si}$ is the distance function to $\overline{\Si}$.
\end{itemize}
By Proposition \ref{property of level surface 1} and Proposition \ref{property of level surface 2}, $\Phi^{\Si}$ satisfies:
\begin{proposition}\label{GMT properties of min-max family}
$\Phi^{\Si}: [0, 1]\rightarrow \big(\Z_n(M^{n+1}), \{0\}\big)$ is continuous under the flat topology, and
\begin{itemize}
\vspace{-5pt}
\setlength{\itemindent}{1em}
\addtolength{\itemsep}{-0.7em}

\item[$(a)$] $\Phi^{\Si}(x)=\partial[[\Om_x]]$, $\Om_x\in\C(M)$ for all $x\in [0, 1]$, and $\Om_0=\emptyset$, $\Om_1=M$;

\item[$(b)$] $\sup_{x\in[0, 1]}\M\big(\Phi^{\Si}(x)\big)=\mH^n(\Si)$, if $\Si\in\mS_+$;

\item[$(c)$] $\sup_{x\in[0, 1]}\M\big(\Phi^{\Si}(x)\big)=2\mH^n(\Si)$, if $\Si\in\mS_-$.
\end{itemize}
\end{proposition}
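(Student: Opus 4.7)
The plan is to read each property directly off the definition of $\Phi^\Si$ as the boundary of sublevel sets of the (signed or unsigned) distance function to $\overline{\Si}$, using the volume comparison results of \S \ref{min-max family}.

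First I would handle continuity in the flat topology simultaneously with part $(a)$. In both cases $(\rom{1})$ and $(\rom{2})$ the sets $\Om_x$ are nested, increasing in $x$, so for $x<y$
\[
\F\big(\Phi^\Si(y)-\Phi^\Si(x)\big)=\F\big(\partial[[\Om_y\setminus\Om_x]]\big)\leq \M\big([[\Om_y\setminus\Om_x]]\big)=\mathrm{vol}(\Om_y\setminus\Om_x).
\]
Since the distance function is $1$-Lipschitz and each level set has finite $n$-dimensional Hausdorff measure by the volume estimates (\ref{volume estimate orientable case})/(\ref{volume estimate non-orientable case}), each level set has zero $(n+1)$-dimensional Lebesgue measure, and hence $\mathrm{vol}(\Om_y\setminus\Om_x)\to 0$ as $y\to x$. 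Those same estimates bound $\M(\partial[[\Om_x]])$ above by $\mH^n(\Si_t)<\infty$ (the reduced boundary of $\Om_x$ sits inside $\Si_t\cup sing(\Si)$, and the singular set has vanishing $\mH^n$), so $\Om_x\in\C(M)$. The endpoint conditions are immediate: $\Om_0$ has zero $(n+1)$-measure (either empty, a subset of the distance-maximizing locus, or equal to $\overline{\Si}$), while $\Om_1$ equals $M$ modulo measure zero, and $\partial[[M]]=0$ since $M$ is closed.

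For part $(b)$, when $\Si\in\mS_+$, Proposition \ref{property of level surface 1}$(b)$ gives $\mH^n(\Si_t)\leq \mH^n(\Si)$ with equality only at $t=0$. Since $\M(\Phi^\Si(x))\leq \mH^n(\Si_{(2x-1)d(M)})$, this yields the upper bound $\sup_x\M(\Phi^\Si(x))\leq \mH^n(\Si)$. Equality is realized at $x=\tfrac12$: because $\Si$ is orientable and by Proposition \ref{property of orientation}(3) separates $M$, $\Om_{1/2}$ coincides (modulo measure zero) with one component $M_2$ of $M\setminus\overline{\Si}$, so $\partial[[\Om_{1/2}]]=[[\overline{\Si}]]$ equipped with the induced orientation, whose mass is $\mH^n(\Si)$.

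For part $(c)$, when $\Si\in\mS_-$, the upper bound $\M(\Phi^\Si(x))\leq \mH^n(\Si_{xd(M)})<2\mH^n(\Si)$ follows directly from Proposition \ref{property of level surface 2}$(b)$. For the matching lower bound on the supremum I would invoke Proposition \ref{property of level surface 2}$(c)$: as $x\to 0^+$ the hypersurfaces $\Si_t$ converge smoothly to a double cover of $\Si$ on every compactly contained open subset of $M\setminus sing(\Si)$. The delicate point here, which I expect to be the main obstacle, is that the volume comparison controls the $\mH^n$-measure of the \emph{topological} level set $\Si_t$, whereas what appears in $\M(\Phi^\Si(x))$ is the perimeter of $\Om_x$, i.e.\ $\mH^n$ of the \emph{reduced} boundary. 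The key observation is that non-orientability forces a small tubular neighborhood $\Om_x$ to genuinely wrap around $\Si$, so on compact subsets of $M\setminus sing(\Si)$ the reduced boundary of $\Om_x$ coincides with both sheets of $\Si_t$; exhausting $M\setminus sing(\Si)$ by such subsets and using that $\mH^n(sing(\Si))=0$ to absorb the error near the singular set, one obtains $\M(\Phi^\Si(x))\to 2\mH^n(\Si)$ as $x\to 0^+$, giving $\sup_x\M(\Phi^\Si(x))=2\mH^n(\Si)$.
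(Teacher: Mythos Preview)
Your proposal is correct and follows exactly the route the paper intends: the paper's entire ``proof'' is the single clause ``By Proposition \ref{property of level surface 1} and Proposition \ref{property of level surface 2}, $\Phi^{\Si}$ satisfies\ldots'', and you have simply filled in the details that this citation leaves implicit. Your treatment of flat continuity via $\F(\partial[[\Om_y\setminus\Om_x]])\leq \mathrm{vol}(\Om_y\setminus\Om_x)$ and your observation in (c) that one must pass from $\mH^n(\Si_t)$ to the perimeter of $\Om_x$ (using the smooth two-sheeted convergence of Proposition \ref{property of level surface 2}(c) on compact subsets of $M\setminus sing(\Si)$) are exactly the points the paper suppresses; one small remark is that the two-sheet structure in (c) comes from using the \emph{unsigned} distance rather than from non-orientability per se, but your argument is unaffected.
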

\begin{remark}
Notice that $\Phi^{\Si}$ satisfies the requirement to apply Theorem \ref{discretization and identification}.
\end{remark}


We need one more elementary fact about min-max hypersurface, which is well-known to experts. A proof is included for completeness.
\begin{lemma}\label{tangent cone multiplicity one}
Let $\{\Si_i\}_{i=1}^l$ be the singular minimal hypersurfaces given in Theorem \ref{AP min-max theorem}, then the associated integral varifolds $[\Si_i]$ have tangent cones with multiplicity one everywhere. Therefore $\Si_i\in \mS$.
\end{lemma}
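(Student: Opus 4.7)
The plan is to exploit the pairwise disjointness of the components $\{\Si_j\}_{j=1}^l$ together with the Schoen-Simon codimension-one regularity theorem and Allard's theorem to show that the multiplicity of $V=\sum_{j=1}^l m_j[\Si_j]$ is uniformly $m_i$ on the regular part of each $\Si_i$; this immediately gives that every tangent cone of $[\Si_i]$ is multiplicity one on each component of its regular part, so that $\Si_i\in\mS$.

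First I would fix $i$ and $p\in\overline{\Si_i}$ and consider any tangent cone $C$ of $[\Si_i]$ at $p$. Since the closures $\{\overline{\Si_j}\}$ are pairwise disjoint, $V=m_i[\Si_i]$ as varifolds in a neighborhood of $p$, so the corresponding tangent cone of $V$ at $p$ is $m_i\cdot C$, a stationary integral hypercone in $T_pM\cong \R^{n+1}$. The Schoen-Simon regularity theorem \cite[Theorem 1]{SS81} applied to $m_i\cdot C$ ensures that its singular set has Hausdorff codimension at least $7$ and that on its regular part the multiplicity is integer-valued and locally constant on each connected component.

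Next I would pick a regular point $q$ of $m_i\cdot C$ and invoke Allard's regularity \cite[24.2]{Si83} to promote the varifold convergence of the rescalings of $V$ at $p$ to $m_i\cdot C$ into smooth graphical convergence on a neighborhood of $q$. Since $sing(\Si_i)$ has Hausdorff dimension at most $n-7$, for small rescaling parameters the preimage of this neighborhood in $M$ meets only $reg(\Si_i)$; hence the multiplicity of $m_i\cdot C$ at $q$ equals that of $V$ there, which is $m_i$. Locally constant multiplicity on each component of $reg(m_i\cdot C)$ then forces multiplicity $m_i$ globally on $reg(m_i\cdot C)$, and dividing by $m_i$ gives that $C$ has multiplicity one everywhere on its regular part, establishing the claim.

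The main obstacle is verifying that every connected component of $reg(m_i\cdot C)$ contains at least one such regular point $q$ whose preimage under the rescalings lies entirely in $reg(\Si_i)$. I expect this to follow from the codimension-$7$ bound on $sing(\Si_i)$: its blow-ups form a set of Hausdorff codimension at least $7$ in $\R^{n+1}$, too small to disconnect any component of $reg(m_i\cdot C)$ from the blow-ups of $reg(\Si_i)$, so a generic regular $q$ can be found in each component.
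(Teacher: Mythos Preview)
Your approach has a genuine gap at the step where you claim ``the multiplicity of $m_i\cdot C$ at $q$ equals that of $V$ there, which is $m_i$.''  Varifold convergence does not preserve multiplicity: a sequence of multiplicity-one smooth minimal hypersurfaces can converge to a plane with multiplicity two or higher (think of two nearby sheets collapsing).  So even if the rescalings of $[\Si_i]$ near $q$ are supported in $reg(\Si_i)$ with multiplicity one, nothing you have written prevents the limit cone $C$ from having multiplicity $\geq 2$ at $q$.  Allard's theorem does not rescue this step: Allard requires the density ratio to be close to $1$, but if $C$ had multiplicity $k>1$ at $q$ the rescalings would have density close to $k$ there, and Allard gives no graphical conclusion in that regime.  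Your invocation of Schoen--Simon on $m_i\cdot C$ has the same problem one level down: that theorem requires a stability hypothesis, which you never establish for the cone.

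What is missing is precisely the extra structure the min-max construction provides.  The paper's proof uses that the min-max varifold $V$ is \emph{almost minimizing in annuli} \cite[3.3]{P81}, which makes $[\Si_i]$ stable in small annuli around any $p\in\overline{\Si_i}$; a cutoff argument then gives stability in a full neighborhood of $p$.  Once stability is in hand, Ilmanen's Theorem~B \cite{I96} applies directly and yields multiplicity-one tangent cones.  If you want to salvage your blow-up strategy, you must first import this stability (it passes to tangent cones), and then use Schoen--Simon curvature estimates or a sheeting theorem to upgrade the varifold convergence to smooth single-sheeted convergence near $q$; without stability there is no mechanism to exclude sheet-collapse.
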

\begin{proof}
Given any point $p\in\overline{\Si}_i$, then $\overline{\Si}_i$ is stable (c.f. \cite[2.3]{P81}\cite[(5)(6)]{I96}) in any small annuli neighborhood of $p$ by \cite[3.3]{P81}. A standard cutoff  argument implies that $\overline{\Si}_i$ is stable near $p$, and \cite[Theorem B]{I96} implies that every tangent cone of $[\Si_i]$ has multiplicity one. 
\end{proof}

\vspace{0.5em}
\begin{proof}
(of Theorem \ref{main theorem1}) Given $\Si\in\mS$ and $\Phi^{\Si}$ (\ref{Phi_Simga}), we can apply Theorem \ref{discretization and identification} to $\Phi^{\Si}$ and get a $(1, \M)$-homotopy sequence $S^{\Si}=\{\phi^{\Si}_i\}_{i\in\N}$ into $\big(\Z_n(M^{n+1}, \M), \{0\}\big)$. By (\ref{max mass control of discrete family}) and Proposition \ref{GMT properties of min-max family},
\begin{equation}\label{min-max bound}
\left. \bL(\{\phi^{\Si}_{i}\}_{i\in\N})\leq \Big\{ \begin{array}{ll}
\mH^n(\Si), \quad \textrm{ if $\Si\in\mS_+$};\\
2\mH^n(\Si), \quad \textrm{ if $\Si\in\mS_-$}.
\end{array}\right. 
\end{equation}
Also by Theorem \ref{discretization and identification}(\rom{4}), $S^{\Si}\in F_A^{-1}\big([[M]]\big)\in\pi^{\#}_1\big(\Z_n(M^{n+1}, \M), \{0\}\big)$. Denote $F_A^{-1}\big([[M]]\big)$ by $\Pi_M$. By Theorem \ref{AP min-max theorem}, $\bL(\Pi_M)>0$. Using (\ref{min-max bound}), we have that
$$\bL(\Pi_M)\leq A_M,$$
where $A_M$ is defined in (\ref{definition of A_M}).

The Min-max Theorem \ref{AP min-max theorem} applied to $\Pi_M$ gives a stationary varifold $V=\sum_{i=1}^l m_i[\Si_i]$, with $m_i\in\N$ and $\{\Si_i\}$ a disjoint collection of minimal hypersurfaces in $\mS$, such that $\bL(\Pi_M)=\|V\|(M)=\sum_{i=1}^lm_i\mH^n(\Si_i)$. Notice that there is only one connected component, denoted by $\Si_A$, by Theorem \ref{generalized Frankel theorem} as $M$ has positive Ricci curvature, i.e. $V=m[\Si_A]$ for some $m\in\N$, $m\neq 0$. Therefore
\begin{equation}
\left. m\mH^n(\Si_A)=\bL(\Pi_M)\leq A_M\leq \Big\{ \begin{array}{ll}
\mH^n(\Si_A), \quad \textrm{ if $\Si_A\in\mS_+$};\\
2\mH^n(\Si_A), \quad \textrm{ if $\Si_A\in\mS_-$},
\end{array}\right. 
\end{equation}
where the last $``\leq"$ follows from the definition (\ref{definition of A_M}) of $A_M$. Thus we have the following two cases:
\begin{itemize}
\vspace{-5pt}
\setlength{\itemindent}{2em}
\addtolength{\itemsep}{-0.7em}

\item[{\bf Case 1}:] If $\Si_A\in\mS_+$, orientable, then $m\leq 1$, so $m=1$, and $\mH^n(\Si_A)=A_M$;

\item[{\bf Case 2}:] If $\Si_A\in\mS_-$, non-orientable, then $m\leq 2$, so $m=1$ or $m=2$.
\end{itemize}

In Case 1 when $\Si_A\in\mS_+$, to prove Theorem \ref{main theorem1}(\rom{1}), we only need to show
\begin{claim}\label{index bound claim}
In this case, $\Si_A$ has Morse index one.
\end{claim}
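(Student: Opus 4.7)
The plan for Claim \ref{index bound claim} is as follows. The lower bound $\Ind(\Si_A)\geq 1$ is immediate: since $\Si_A\in\mS_+$ is orientable and $M$ is orientable, Lemma \ref{orientable and two-sided} gives that $\Si_A$ is two-sided, and then Lemma \ref{no stable two-sided minimal hypersurface} combined with $Ric_g>0$ rules out stability. For the upper bound I argue by contradiction and suppose $\Ind(\Si_A)\geq 2$. Exhausting $reg(\Si_A)$ by open subsets with smooth boundary and compactly contained closure, and using monotonicity of Dirichlet eigenvalues under domain exhaustion, one finds a smooth domain $\Om\subset reg(\Si_A)$ with $\overline{\Om}\subset reg(\Si_A)$ compact such that the Jacobi form $Q$ of $L_{\Si_A}$ admits two independent negative directions $\phi_1,\phi_2\in C^\infty_0(\Om)$, which we may take to be $Q$-orthogonal eigenfunctions. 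The combination $\psi:=Q(1,\phi_2)\phi_1-Q(1,\phi_1)\phi_2\in C^\infty_0(\Om)$ (or $\psi:=\phi_1$ when both $Q(1,\phi_i)=0$) then satisfies $\psi\neq 0$, $Q(\psi,\psi)<0$, and $Q(1,\psi)=0$.

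Next I build a two-parameter family $\Phi_{u,v}=\partial[[\Om_{u,v}]]$ for $(u,v)\in[-\epsilon_0,\epsilon_0]^2$ with $\Phi_{0,0}=[[\Si_A]]$ and $(0,0)$ a strict local maximum of $A(u,v):=\mH^n(\Phi_{u,v})$. Fix a tubular neighborhood $\mathcal{T}$ of $\overline{\Om}$ in $M$ with Fermi coordinates $(x,r)$, and set
\[
\Om_{u,v}:=\big(\Om_u\setminus\mathcal{T}\big)\cup\big\{(x,r)\in\mathcal{T}:\ r\leq u+v\psi(x)\big\},
\]
where $\Om_u$ comes from the distance family $\Phi^{\Si_A}$. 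Since $\psi|_{\partial\Om}=0$ the two pieces match along $\partial\mathcal{T}$, so $(u,v)\mapsto \Om_{u,v}\in\C(M)$ is continuous in $L^1$ and $\Phi_{u,v}$ is continuous in the flat topology. Splitting $A=A_{\rm out}(u)+A_\Om(u,v)$ by $\mathcal{T}$, the pointwise Jacobian estimate in Corollary \ref{distortion of normal exponential map} yields a quadratic upper bound $A_{\rm out}(u)\leq A_{\rm out}(0)(1-c\,u^2)$ for small $u$, while $A_\Om$ is smooth in $(u,v)$ with vanishing first derivatives and with Hessian at the origin a $2\times 2$ matrix whose diagonal entries are $-\int_\Om(Ric+|A|^2)<0$ and $Q(\psi,\psi)<0$ and whose off-diagonal entry is $Q(1,\psi)=0$ by construction. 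Combining, one obtains a uniform estimate $A(u,v)\leq A_M-c'(u^2+v^2)$ on a small square.

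Since $(0,0)$ is then a strict local maximum, on the upper semicircle $\{(\delta\cos\theta,\delta\sin\theta):\theta\in[0,\pi]\}$ of any sufficiently small radius $\delta$ the area is bounded by $A_M-c'\delta^2$. I replace the portion of the original distance path $\Phi^{\Si_A}$ where $u(x):=(2x-1)d(M)\in[-\delta,\delta]$ by this detour, producing a new continuous map $\Phi':[0,1]\to \big(\Z_n(M),\F\big)$ still of the form $\Phi'(x)=\partial[[\Om'_x]]$ with $\Om'_0=\emptyset$, $\Om'_1=M$, and $\sup_x\mH^n(\Phi'(x))<A_M$ (strict monotonicity from Proposition \ref{property of level surface 1}(b) takes care of the unchanged portion of the path). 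Applying Theorem \ref{discretization and identification} to $\Phi'$ gives a $(1,\M)$-homotopy sequence $\{\phi'_i\}$ with $F_A(\{\phi'_i\})=[[M]]$, so $\{\phi'_i\}\in\Pi_M$, and $\bL(\{\phi'_i\})\leq\sup_x\mH^n(\Phi'(x))<A_M=\bL(\Pi_M)$, a contradiction. The main obstacle is the second paragraph: justifying the quadratic upper bound on $A$ near $(0,0)$ despite the singular set of $\Si_A$. The saving point is that only an inequality (not a smooth expansion) is required for $A_{\rm out}$, supplied by the pointwise volume comparison, while $A_\Om$ is genuinely smooth because $\psi$ is compactly supported in $reg(\Si_A)$.
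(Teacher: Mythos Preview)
Your argument is correct and shares the paper's core idea: from $\Ind(\Si_A)\geq 2$ one extracts $\psi\in C^\infty_0(\Om)$ with $Q(1,\psi)=0$ and $Q(\psi,\psi)<0$, uses it to build a perturbation of the distance family whose maximal area drops strictly below $A_M$, and then applies Theorem \ref{discretization and identification} to contradict the definition of $\bL(\Pi_M)$. Your explicit treatment of the lower bound $\Ind(\Si_A)\geq 1$ via Lemmas \ref{orientable and two-sided} and \ref{no stable two-sided minimal hypersurface} is a point the paper leaves implicit.

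The paper's execution is somewhat more economical than your semicircular detour. Rather than working in a two-parameter square, it extends $\psi\nu$ to an ambient vector field $\ti X$ compactly supported in a neighborhood $U$ of $\overline{\Om}$, and applies the \emph{single} ambient isotopy $\ti F_\de$ (the time-$\de$ flow of $\ti X$) to every slice $\Si_t$ of the distance family, obtaining a one-parameter family $\{\ti F_\de(\Si_t)\}_t$ directly. The Taylor expansion of $\ti f(t,s)=\mH^n(\ti F_s(\Si_t)\cap\overline U)$ (with the same diagonal negative Hessian you compute) gives $\mH^n(\ti F_\de(\Si_t))<\mH^n(\Si_t)\leq A_M$ for $|t|\leq\ep$ and any small $\de\neq 0$; for $|t|\geq\ep$, the strict inequality in Proposition \ref{property of level surface 1}(b) plus continuity in $\de$ is enough. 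This sidesteps the need for a separate quadratic upper bound on your $A_{\mathrm{out}}(u)$, and the resulting competitor is simply a diffeomorphic image of the original sweepout, so the form $\Phi'(x)=\partial[[\ti F_\de(\Om_x)]]$ with $\Om'_0=\emptyset$, $\Om'_1=M$ is immediate.
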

Assume that the claim is false, i.e. the index of $\Si_A$ is no less than $2$. By Definition \ref{Morse index for hypersurfaces with singularities}, there exists an open set $\Om\subset \Si_A$ with smooth boundary, such that $\Ind(\Om)\geq 2$. Then we can find two nonzero $L^2$-orthonormal eigenfunctions $\{v_1, v_2\}\subset C^{\infty}_0(\Om)$ of the Jacobi operator $L_{\Si_A}$ with negative eigenvalues. A linear combination will give a $v_3\in C^{\infty}_0(\Om)$, such that
\begin{equation}\label{orthogonality condition}
\int_{\Om}v_3\cdot L_{\Si_A}1d\mu=\int_{\Om}1\cdot L_{\Si_A}v_3=0, \quad v_3\neq 0.
\end{equation}
We can assume that $\Om=U\cap \Si_A$ for some open set $U\subset M\backslash sing(\Si_A)$. Let $\ti{X}=v_3\nu$ with $\nu$ the unit normal of $\Si_A$, and extend it to a tubular neighborhood of $\Si_A$, such that $\ti{X}$ has compact support in $\bar{U}$. Let $\{\ti{F}_s\}_{s\in[-\ep, \ep]}$ be the flow of $\ti{X}$, and denote $\Si_{t, s}=\ti{F}_s(\Si_t)$, where $\{\Si_t\}$ is the family associated to $\Si_A$ as in Proposition \ref{property of level surface 1}. Notice that $\Si_{t, s}=\Si_t$ outside $\overline{U}$, and $\{\Si_{t, s}\lc U\}_{(s, t)\in[-\ep, \ep]\times[-\ep, \ep]}$ is a smooth family for small $\ep$ by Proposition \ref{property of level surface 1}(c). Denote $\ti{f}(t, s)=\mH^n(\Si_{t, s}\cap \overline{U})$. Then $\nabla\ti{f}(0, 0)=0$ (by minimality of $\Si_A$), $\frac{\partial^2}{\partial t\partial s}\ti{f}(0, 0)=-\int_{\Om}v_3L_{\Si_A}1d\mu=0$ (by (\ref{orthogonality condition})), $\frac{\partial^2}{\partial t^2}\ti{f}(0, 0)=-\int_{\Om}1\cdot L_{\Si_A}1 d\mu<0$ (by $Ric_g>0$), and $\frac{\partial^2}{\partial s^2}\ti{f}(0, 0)=-\int_{\Om}v_3L_{\Si_A}v_3 d\mu<0$ (as $v_3$ is a linear combination of eigenfunctions of $L_{\Si_0}$ with negative eigenvalues). 

Now consider $\mH^n(\Si_{t, s})=\mH^n(\Si_{t, s}\cap\overline{U})+\mH^n(\Si_{t, s}\backslash \overline{U})=\ti{f}(t, s)+\mH^n(\Si_t\backslash\overline{U})$. For $(t, s)\in[-\ep, \ep]\times[-\ep, \ep]$, $s\neq 0$, with $\ep$ small enough, by Taylor expansion,
\begin{displaymath}
\begin{split}
\mH^n(\Si_{t, s}) &=\ti{f}(t, 0)+\frac{\partial}{\partial s}\ti{f}(t, 0)s+\frac{\partial^2}{\partial s^2}\ti{f}(t, 0)s^2+o(s^2)+\mH^n(\Si_t\backslash\overline{U})\\
                           &=\ti{f}(t, 0)+\big\{\frac{\partial}{\partial s}\ti{f}(0, 0)+\frac{\partial^2}{\partial t\partial s}\ti{f}(0, 0)t+o(t)\big\}s+\frac{\partial^2}{\partial s^2}\ti{f}(t, 0)s^2+o(s^2)+\mH^n(\Si_t\backslash\overline{U})\\
                           &=\ti{f}(t, 0)+\frac{\partial^2}{\partial s^2}\ti{f}(t, 0)s^2+o(ts+s^2)+\mH^n(\Si_t\backslash\overline{U})\\
                           &<\ti{f}(t, 0)+\mH^n(\Si_t\backslash\overline{U})\\
                           &=\mH^n(\Si_t)\leq\mH^n(\Si_A),
\end{split}
\end{displaymath}
where the fourth $``<"$ follows from the fact that $\frac{\partial^2}{\partial s^2}\ti{f}(t, 0)<0$ for $t$ small enough (as $\frac{\partial^2}{\partial s^2}\ti{f}(0, 0)<0$). For $|t|\geq \ep$, as $\mH^n(\Si_t)<\mH^n(\Si_A)$, we can find $\de>0$, $\de\leq \ep$ small enough, such that $\mH^n(\Si_{t, \de})<\mH^n(\Si_A)$. In summary,
$$\max\{\mH^n(\Si_{t, \de}): -d(M)\leq t\leq d(M)\}<\mH^n(\Si_A).$$
As $\{\Si_{t, \de}\}$ are deformed from $\{\Si_t\}$ by the ambient isotopy $\ti{F}_{\de}: M\rightarrow M$, we can associate it with a mapping $\Phi_{\de}: [0, 1]\rightarrow \big(\Z_{n}(M^{n+1}, \F), \{0\}\big)$ as in (\ref{Phi_Simga})(\rom{1}), such that
\begin{itemize}
\vspace{-5pt}
\setlength{\itemindent}{2em}
\addtolength{\itemsep}{-0.7em}

\item $\max_{x\in[0, 1]}\M\big(\Phi_{\de}(x)\big)=\max_{t}\mH^n(\Si_{t, \de})<\mH^n(\Si_A)=\bL(\Pi_M)$;

\item $\Phi_{\de}(x)=\partial[[\ti{\Om}_x]]$, $\ti{\Om}_x=\ti{F}_{\de}(\Om_x)\in\C(M)$, for all $x\in[0, 1]$.
\end{itemize}
Applying Theorem \ref{discretization and identification} to $\Phi_{\de}$ gives a $(1, \M)$-homotopy sequence $S_{\de}=\{\phi^{\de}_i\}_{i\in\N}$, such that $S_{\de}\in\Pi_M$, and
$$\bL(S_{\de})\leq \max_{x\in[0, 1]}\M\big(\Phi_{\de}(x)\big)<\bL(\Pi_M),$$
which is a contradiction to the definition of $\bL(\Pi_M)$ (\ref{width}). So we finish the prove of Claim \ref{index bound claim} and hence Theorem \ref{main theorem1}(\rom{1}).

In Case 2 when $\Si_A\in\mS_-$. By Proposition \ref{orientation and multiplicity result}, $m$ must be an even number. Hence $m=2$, and $2\mH^n(\Si_A)=A_M$. To prove Theorem \ref{main theorem1}(\rom{2}), we only need to show
\begin{claim}\label{index bound claim2}
In this case, $\Si_A$ is stable, i.e. $\Ind(\Si_A)=0$.
\end{claim}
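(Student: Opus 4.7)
The plan is to argue by contradiction, mirroring the proof of Claim \ref{index bound claim}, with two modifications reflecting non-orientability: the test variation is lifted from the orientable double cover $\ti{\Si}_A$, and the Taylor analysis must be performed as $t\to 0^+$ because the supremum of $\mH^n(\Si_t)$ along the sweepout of Proposition \ref{property of level surface 2} is only attained in this limit. Assume $\Ind(\Si_A)\geq 1$. By Definition \ref{Morse index for hypersurfaces with singularities} one can fix an open set $\Om\subset\Si_A$ with $\overline{\Om}\subset\Si_A\setminus sing(\Si_A)$ and smooth boundary, carrying on its lift $\ti{\Om}\subset\ti{\Si}_A$ a nonzero anti-symmetric eigenfunction $\ti{v}\in C^{\infty}_0(\ti{\Om})$ with $\ti{v}\circ\tau=-\ti{v}$ and $L_{\ti{\Si}_A}\ti{v}=-\la\,\ti{v}$ for some $\la<0$. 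Since both $\ti{v}$ and $\ti{\nu}$ change sign under $\tau$, the vector field $\ti{v}\,\ti{\nu}$ is $\tau$-invariant and descends to a section $X$ of $\nu(\Si_A)$ supported in $\Om$; I extend $X$ to a compactly supported ambient vector field on $M$ whose support lies in a tubular neighborhood $U$ with $\overline{U}\subset M\setminus sing(\Si_A)$, and denote its flow by $\{\ti{F}_s\}_{s\in[-\ep,\ep]}$. Setting $\Si_{t,s}=\ti{F}_s(\Si_t)$ and $\Phi_s(x)=\ti{F}_{s\#}\Phi^{\Si_A}(x)=\partial[[\ti{F}_s(\Om_x)]]$ produces a flat-continuous sweepout by boundaries of sets of finite perimeter, with $\Phi_s(0)=0$ and $\Phi_s(1)=\partial[[M]]=0$.

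The core assertion is that for some small $s_0>0$,
\begin{equation}\label{eq:plan_claim2_goal}
\sup_{t\in[0,d(M)]}\mH^n(\Si_{t,s_0})<2\mH^n(\Si_A),
\end{equation}
which I prove by splitting the interval into $[t_0,d(M)]$ and $(0,t_0]$, observing that $t=0$ contributes mass zero. On $[t_0,d(M)]$ the continuous function $\mH^n(\Si_t)$ is bounded above by $2\mH^n(\Si_A)-\eta_1$ by Proposition \ref{property of level surface 2}(b), and the same bound (with smaller margin) persists for $\mH^n(\Si_{t,s_0})$ because $\ti{F}_{s_0}$ is arbitrarily close to the identity in $C^1$ for small $s_0$. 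On $(0,t_0]$ I use Proposition \ref{property of level surface 2}(c) to parameterize $\Si_t\cap U$ smoothly by $p\mapsto\exp_M(t\ti{\nu}(p))$ on $\ti{\Om}$, so that composition with $\ti{F}_s$ makes $\ti{f}(t,s):=\mH^n(\Si_{t,s}\cap U)$ a smooth function on $[0,t_0]\times[-\ep,\ep]$ satisfying, by the first and second variation formulae together with the minimality of $\ti{\Si}_A$,
\begin{equation*}
\partial_s\ti{f}(0,0)=\int_{\ti{\Om}}\mathrm{div}_{\ti{\Si}_A}(\ti{v}\,\ti{\nu})\,d\mH^n=0,\qquad \partial^2_s\ti{f}(0,0)=-\int_{\ti{\Om}}\ti{v}\,L_{\ti{\Si}_A}\ti{v}\,d\mH^n=\la\int_{\ti{\Om}}\ti{v}^2\,d\mH^n<0.
\end{equation*}
Writing $\mH^n(\Si_{t,s})=\mH^n(\Si_t)+\big(\ti{f}(t,s)-\ti{f}(t,0)\big)$, which holds because $\ti{F}_s$ is the identity outside $U$, and combining the strict inequality $\mH^n(\Si_t)<2\mH^n(\Si_A)$ with a Taylor expansion in $s$ using $\partial_s\ti{f}(t,0)=o(1)$ and $\partial^2_s\ti{f}(t,s)\leq\tfrac{\la}{2}\int_{\ti{\Om}}\ti{v}^2$ (uniformly as $t\to 0^+$ and $s\to 0$), one obtains $\mH^n(\Si_{t,s_0})<2\mH^n(\Si_A)-\eta_2$ uniformly on $(0,t_0]$ for $s_0,t_0$ sufficiently small. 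Together with the estimate on $[t_0,d(M)]$ this yields \eqref{eq:plan_claim2_goal}.

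Applying Theorem \ref{discretization and identification} to $\Phi_{s_0}$ produces a $(1,\M)$-homotopy sequence $\{\phi^{s_0}_i\}$ in the class $\Pi_M=F_A^{-1}([[M]])$ with $\bL(\{\phi^{s_0}_i\})\leq\sup_{x}\M(\Phi_{s_0}(x))<2\mH^n(\Si_A)=\bL(\Pi_M)$, contradicting the definition \eqref{width} of the width. Hence $\Ind(\Si_A)=0$, completing Theorem \ref{main theorem1}$(\rom{2})$. The principal obstacle is the analysis on $(0,t_0]$: the supremum $2\mH^n(\Si_A)$ is achieved only in the degenerate limit where $\Si_t$ collapses to a multiplicity-two copy of $\Si_A$, so the second-variation analysis must be done on the orientable double cover $\ti{\Si}_A$ (where $\ti{v}$ is a genuine Jacobi eigenfunction) and then transported to the perturbed family via Proposition \ref{property of level surface 2}(c); the anti-symmetry $\ti{v}\circ\tau=-\ti{v}$ is precisely what allows $\ti{v}\,\ti{\nu}$ to descend to an honest ambient vector field on $M$, producing a coherent motion of the two sheets of $\Si_t$ under the flow $\ti{F}_s$.
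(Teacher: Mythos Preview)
Your overall strategy matches the paper's: argue by contradiction, lift the negative eigenfunction to the double cover, push $\ti v\,\ti\nu$ down to an ambient flow, deform the distance sweepout, and discretize via Theorem~\ref{discretization and identification}. The setup is correct, but there is a gap in how you close the Taylor estimate near $t=0$.

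The ingredients you invoke---the bare strict inequality $\mH^n(\Si_t)<2\mH^n(\Si_A)$, the statement $\partial_s\ti f(t,0)=o(1)$, and the uniform bound $\partial_s^2\ti f\le\tfrac{\la}{2}\int_{\ti\Om}\ti v^2$---do not by themselves yield compatible $s_0,t_0$. To force $\ti f(t,s_0)-\ti f(t,0)<0$ on $(0,t_0]$ you need $\sup_{[0,t_0]}|\partial_s\ti f(\cdot,0)|$ small relative to $s_0$, which ties $t_0$ to $s_0$; on $[t_0,d(M)]$ you need $s_0$ small relative to $\eta_1(t_0)$, and (\ref{volume estimate non-orientable case}) only gives $\eta_1(t_0)$ of order $t_0^2$. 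These two constraints are in general circular, and ``$s_0,t_0$ sufficiently small'' does not resolve it.

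The paper's remedy is to record the orthogonality condition
\[
\int_{\ti\Om}\ti v\,L_{\ti\Si_A}1\,d\mu=\int_{\ti\Om}1\cdot L_{\ti\Si_A}\ti v\,d\mu=0,
\]
labelled (\ref{orthogonality condition 2}), which holds \emph{for free} because $\ti v$ is odd under $\tau$ while $L_{\ti\Si_A}1=Ric(\ti\nu,\ti\nu)+|A|^2$ is even, so the integrand is odd. This kills the cross derivative $\partial_t\partial_s\ti f(0,0)$; together with $\partial_t^2\ti f(0,0)<0$ (from $Ric_g>0$) and $\partial_s^2\ti f(0,0)<0$, the full $(t,s)$-Hessian of $\ti f$ at $(0,0)$ is diagonal and negative definite, exactly the structure used in Claim~\ref{index bound claim}. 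One then fixes $\ep>0$ from this negative-definiteness \emph{before} choosing $s_0$, obtains $\mH^n(\Si_{t,s})<\mH^n(\Si_t)\le 2\mH^n(\Si_A)$ for all $(t,s)\in(0,\ep]\times(0,\ep]$, and only afterwards picks $\de\le\ep$ small enough for the compact range $[\ep,d(M)]$---no circularity. You correctly note that anti-symmetry lets $\ti v\,\ti\nu$ descend to $M$; what you are missing is that the \emph{same} anti-symmetry supplies, at no extra cost, the orthogonality that in the orientable Claim~\ref{index bound claim} required a second negative eigenvalue.
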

The proof is similar to Claim \ref{index bound claim}. If the claim is false, then there exists an open set $\Om\subset \Si_A$ with smooth boundary, such that $\Ind_D(\Om)\geq 1$. Denote $\ti{\Si}_A$ by the orientable double cover of $\Si_A$, and $\ti{\Om}$ the lift-up of $\Om$; then there exists an anti-symmetric eigenfunction $\ti{\phi}\in C^{\infty}_0(\ti{\Om})$ of the Jacobi operator $L_{\ti{\Si}_A}$ of $\ti{\Si}_A$ with negative eigenvalue (c.f. \S \ref{Orientation, second variation and Morse index}). The anti-symmetric condition directly implies that:
\begin{equation}\label{orthogonality condition 2}
\int_{\ti{\Om}}\ti{\phi}\cdot L_{\ti{\Si}_A}1d\mu=\int_{\ti{\Om}}1\cdot L_{\ti{\Si}_A}\ti{\phi} d\mu=0.
\end{equation}
Let $\ti{\nu}$ be the unit normal of $\ti{\Si}_A$, and $\pi:\ti{\Si}_A\rightarrow \Si_A$ the covering map. The anti-symmetric condition of $\ti{\phi}$ implies that $\ti{\phi}\ti{\nu}$ is symmetric on $\ti{\Si}_A$ (c.f. \S \ref{Orientation, second variation and Morse index}). Hence denote $\ti{X}=\pi_{*}(\ti{\phi}\ti{\nu})$ by the push-forward of $\ti{\phi}\ti{\nu}$ to $\Si_A$ under $\pi$. 
Similarly as above, extend $\ti{X}$ to a neighborhood of $\Si_A$, and denote $\{\ti{F}_s\}_{s\in[-\ep, \ep]}$ by the flow associated to $\ti{X}$. Let $\{\Si_t\}$ be the family associated to $\Si_A$ by Proposition \ref{property of level surface 2}, where we assume that $\Si_0$ is a double cover of $\Si_A$; then $\{\Si_t\}_{t\in[0, \ep]}$ is a smooth family away from $sing(\Si_A)$ for small $\ep$ by Proposition \ref{property of level surface 2}(c). Let $\Si_{t, s}=\ti{F}_s(\Si_t)$; then $\Si_{t, s}$ are deformations of $\Si_t$ away from $sing(\Si_A)$ by ambient isotopies. By similar argument as in Claim \ref{index bound claim} using (\ref{orthogonality condition 2}) instead of (\ref{orthogonality condition}), we can find $\de>0$ small enough, such that
$$\max\{\mH^n(\Si_{t, \de}): 0\leq t\leq d(M)\}<2\mH^n(\Si_A).$$
Then we can get a contradiction by discretizing the family $\{\Si_{t, \de}\}$ in the same way. Now we finish the proof.
\end{proof}


\section{Appendix}

\subsection{Reverse statement of Proposition \ref{technical 1}.}\label{reverse of technical 1}
Now we list the detailed argument to get the reverse statement of Proposition \ref{technical 1} used in the proof. In fact, Proposition \ref{technical 1} has another equivalent formulation as follows:

\begin{proposition}\label{technical 1 version 2}
Given $\de, L, l, m, T$ as in Proposition \ref{technical 1}, there exists $k=k(l, m, T, \de, L)\in\N$, such that for any $\rho>0$, there exists a $\ep=\ep(\rho, l, m, T, \de, L)>0$, such that for any $0<s<\ep$, and $\phi$ as in (\ref{phi to be extended}), there exists
$$\ti{\phi}: I(m, k)_0\rightarrow \B^{\F}_{\rho}(T),\ \textrm{with }\ti{\phi}(y)=\partial[[\Om_y]],$$
$\Om_y\in\C(M)$, 
$y\in I(m, l)_0$, and satisfying $(\rom{1})(\rom{2})(\rom{3})(\rom{4})$ in Proposition \ref{technical 1}.
\end{proposition}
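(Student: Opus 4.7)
The plan is to show that Propositions \ref{technical 1} and \ref{technical 1 version 2} are equivalent by a simple quantifier reshuffle, so that the contradiction argument used in the proof of Proposition \ref{technical 1} (which negates the ``reverse'' form) is justified.

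First I would handle the direction that Proposition \ref{technical 1} implies Proposition \ref{technical 1 version 2}, which is the direction most relevant to using the reverse form. Given $\de, L, l, m, T$, apply Proposition \ref{technical 1} to obtain an $\ep_{0}=\ep(l,m,T,\de,L)$, an integer $k=k(l,m,T,\de,L)$, and a function $\rho_{0}:\R^{1}_{+}\to\R^{1}_{+}$ with $\rho_{0}(s)\to 0$ as $s\to 0$. Take this same $k$ for Proposition \ref{technical 1 version 2}. Now, given any prescribed $\rho>0$, use the decay $\rho_{0}(s)\to 0$ to choose $\ep=\ep(\rho,l,m,T,\de,L)>0$ small enough that $\ep\leq \ep_{0}$ and $\rho_{0}(s)<\rho$ for every $0<s<\ep$. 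For any $\phi$ as in (\ref{phi to be extended 2}), Proposition \ref{technical 1} produces an extension $\ti{\phi}:I(m,k)_{0}\to \B^{\F}_{\rho_{0}(s)}(T)\subset \B^{\F}_{\rho}(T)$ satisfying (\rom{1})--(\rom{4}); since these four properties make no reference to the size of the ambient $\F$-ball, they transfer verbatim.

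The reverse implication is equally direct and requires only the construction of the function $\rho_{0}$. Given $\de,L,l,m,T$, the integer $k$ supplied by Proposition \ref{technical 1 version 2} serves as the $k$ of Proposition \ref{technical 1}. Fix any decreasing sequence $\rho_{j}\searrow 0$, and let $\ep_{j}=\ep(\rho_{j},l,m,T,\de,L)$ be the corresponding radii given by Proposition \ref{technical 1 version 2}; by replacing $\ep_{j}$ with $\min_{i\leq j}\ep_{i}$ if necessary, we may assume $\ep_{j}$ is strictly decreasing to $0$. Define
\begin{equation*}
\rho_{0}(s)=\rho_{j}\quad\text{for}\ \ep_{j+1}\leq s<\ep_{j},
\end{equation*}
a step function with $\rho_{0}(s)\to 0$ as $s\to 0$, and set $\ep:=\ep_{1}$. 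Then for $0<s<\ep$, the index $j$ with $\ep_{j+1}\leq s<\ep_{j}$ gives $s<\ep_{j}=\ep(\rho_{j},l,m,T,\de,L)$, so Proposition \ref{technical 1 version 2} applied with parameter $\rho_{j}$ yields $\ti{\phi}:I(m,k)_{0}\to\B^{\F}_{\rho_{j}}(T)=\B^{\F}_{\rho_{0}(s)}(T)$ satisfying (\rom{1})--(\rom{4}), completing Proposition \ref{technical 1}.

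There is no hard step here; the only mild subtlety is making sure the same $k$ works uniformly. In the direction Prop \ref{technical 1}$\Rightarrow$Prop \ref{technical 1 version 2} this is automatic because $k$ depends only on $(l,m,T,\de,L)$, and in the reverse direction it is precisely the uniformity of $k$ asserted in Proposition \ref{technical 1 version 2} that allows one to vary $\rho$ while keeping the target cell complex $I(m,k)$ fixed. With this equivalence established, the contradiction hypothesis used at the beginning of the proof of Proposition \ref{technical 1}—that for some $\rho_{0}>0$ and some $k_{0}\in\N$ large the extension fails no matter how small $s$ is taken—is exactly the negation of Proposition \ref{technical 1 version 2}, and the remainder of the argument (via Lemmas \ref{lemma 1 for technical 1}, \ref{case 1}, \ref{case 2}) produces the desired extension, contradicting that negation.
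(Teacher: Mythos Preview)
Your proposal is correct and follows essentially the same approach as the paper: both establish the equivalence of Propositions \ref{technical 1} and \ref{technical 1 version 2} by elementary quantifier manipulation, so that negating the latter furnishes the contradiction hypothesis used to prove the former. The only cosmetic difference is that in the direction Prop~\ref{technical 1 version 2} $\Rightarrow$ Prop~\ref{technical 1} the paper defines $\rho(s)$ via a $\sup$--$\inf$ over admissible extensions while you build it as a step function from a sequence $\rho_j\searrow 0$ (just make sure to also take $\ep_j\leq 1/j$ so the sequence actually decreases to $0$); both constructions work equally well.
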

Now we show that this formulation implies Proposition \ref{technical 1}. In fact, under the assumption in the above proposition, we can fix an $\rho_0=1>0$, and take $\ep=\ep(\rho_0, l, m, T, \de, L)$. Given $0<s<\ep$, and $\phi$ as in (\ref{phi to be extended}), we can define
\begin{equation}\label{rho_phi}
\begin{split}
\rho_{\phi, s}=\inf\{ \rho:\ & \textrm{$\exists\ \rho>0$, and } \ti{\phi}: I(m, k)_0\rightarrow \B^{\F}_{\rho}(T),\ \textrm{with }\ti{\phi}(y)=\partial[[\Om_y]], \Om_y\in\C(M),\\
                              &\textrm{satisfying }(\rom{1})(\rom{2})(\rom{3})(\rom{4})\textrm{ in Proposition \ref{technical 1}}\}.
\end{split}
\end{equation}
$\rho_{\phi, s}$ is well-defined since $\rho_0$ belongs to the above set, and $0\leq \rho_{\phi, s}\leq \rho_0$. Now define the function $\rho: [0, \ep)\rightarrow\R^1_+$,
$$\rho(s)=2\sup\{\rho_{\phi, s}:\ \phi \textrm{ is any map as in } (\ref{phi to be extended})\}.$$
$\rho(s)$ is well-defined, as $\rho(s)\leq 2\rho_0$. Also from the definition, the function $\rho$ depends only on $l, m, T, \de, L$. 
\begin{claim}
$\rho(s)\rightarrow 0$, as $s\rightarrow 0$.
\end{claim}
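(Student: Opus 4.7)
The plan is to argue by contradiction, using Proposition~\ref{technical 1 version 2} (which we are granted here) to rule out the failure of $\rho(s)\to 0$. The claim is purely a bookkeeping translation: the uniform statement of Proposition~\ref{technical 1 version 2} (for every target $\rho$ there is a threshold $\ep$) is equivalent to the modulus-type statement of Proposition~\ref{technical 1} (for every $s$ one has a radius $\rho(s)$ with $\rho(s)\to 0$).

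First I would suppose toward contradiction that the claim fails; since $\rho(s)\leq 2\rho_0=2$, this yields $\rho_*>0$ and a sequence $s_j\to 0^+$ with $\rho(s_j)\geq 2\rho_*$. Because $\rho(s_j)=2\sup_{\phi}\rho_{\phi,s_j}$, for each $j$ I can pick a map $\phi_j$ of the form (\ref{phi to be extended}) with parameter $s_j$ such that $\rho_{\phi_j,s_j}>\rho_*/2$. Unpacking the infimum in (\ref{rho_phi}) then shows that $\phi_j$ admits no extension $\tilde\phi:I(m,k)_0\to\B^{\F}_{\rho}(T)$ satisfying properties $(\rom{1})(\rom{2})(\rom{3})(\rom{4})$ of Proposition~\ref{technical 1} for any $\rho\leq\rho_*/2$.

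The second step is to apply Proposition~\ref{technical 1 version 2} with target radius $\rho=\rho_*/4$, which produces $\ep=\ep(\rho_*/4,l,m,T,\de,L)>0$ such that every admissible $\phi$ with source parameter $s<\ep$ does admit such an extension into $\B^{\F}_{\rho_*/4}(T)$. For $j$ so large that $s_j<\ep$, this puts $\rho_*/4$ in the defining set of $\rho_{\phi_j,s_j}$, hence $\rho_{\phi_j,s_j}\leq\rho_*/4<\rho_*/2$, contradicting the previous step.

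I do not anticipate any real obstacle; the content is entirely carried by Proposition~\ref{technical 1 version 2}, and the only care needed is to note that the integer $k$, the parameters $l,m,T,\de,L$, and the class of admissible $\phi$ are the same in both formulations, so the quantifiers chain cleanly.
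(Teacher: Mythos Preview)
Your proof is correct and follows essentially the same approach as the paper: both invoke Proposition~\ref{technical 1 version 2} with a small target radius to force $\rho_{\phi,s}$ (and hence $\rho(s)$) to be small once $s$ is below the corresponding threshold. The paper phrases this as a direct $\epsilon$--$\delta$ argument rather than by contradiction, but the content is identical.
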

\begin{proof}
For any $\si>0$ small enough, by Proposition \ref{technical 1 version 2} we can find $\ep_{\si}=\ep(\si, l, m, T, \de, L)>0$, so that if $0<s<\ep_{\si}$, then every $\phi$ as in (\ref{phi to be extended}) can be extended to $\ti{\phi}: I(m, k)_0\rightarrow \B^{\F}_{\si}(T)$ satisfying the requirement as in (\ref{rho_phi}); hence $\rho_{\phi, s}\leq \si$, and $\rho(s)\leq 2\si$ by definition.
\end{proof}
\noindent By taking $k, \ep, \rho(s)$ as above, Proposition \ref{technical 1 version 2} implies Proposition \ref{technical 1}. The reverse is trivial.

\vspace{0.5em}
To get the reverse statement of Proposition \ref{technical 1}, we can use the reverse statement of Proposition \ref{technical 1 version 2}.

\subsection{Some basic facts of exponential map.}\label{basic facts of exponential map}
Here we collect a few basic facts about exponential maps summarized in \cite[\S 3.4]{P81} that we need to use for the discretization procedure in Lemma \ref{case 2}. We will use the following notions:
\begin{itemize}
\setlength{\itemindent}{0.5em}
\addtolength{\itemsep}{-0.6em}
\item $r_p(\cdot)$ denotes the distance function of $M^{n+1}$ to $p\in M$, and $B(p, r)$ denotes the closed ball centered at $p$ of radius $r$ in $M$;

\item Given $\la\geq 0$, $\mu(\la): \R^{n+1}\rightarrow \R^{n+1}$ denotes the scaling map by: $\mu(\la): x\rightarrow \la x$;

\item Given a map $f: (W, g_1)\rightarrow (Z, g_2)$, $Lip(f)$ denotes the Lipschitz constant with respect the metrics $g_1, g_2$.
\end{itemize}

Given $p\in M$, let $exp_p: T_p M\cong\R^{n+1}\rightarrow M$ be the exponential map. First, let us list several basic facts in \cite[\S 3.4(4)]{P81}.  Given $q\in M$, and $0<\ep<1$, there exists a neighborhood $Z\subset M$ of $q$, such that, if $p\in Z$, $W=exp_p^{-1}(Z)\subset T_p M\cong\R^{n+1}$, and $E=exp_p|_W$, then the following properties hold:
\begin{itemize}
\setlength{\itemindent}{0.5em}
\addtolength{\itemsep}{-0.7em}
\item[$(a)$] $E$ is a $C^2$ diffeomorphism onto $Z$;

\item[$(b)$] $Z$ is strictly geodesic convex;

\item[$(c)$] $(Lip E)^n(Lip E^{-1})^n\leq 2$;

\item[$(d)$] $Lip(r_p|_Z)\leq 2$;

\item[$(e)$] If $x\in Z$ and $0\leq \la\leq 1$, then $E\circ \mu(\la)\circ E^{-1}(x)\in Z$;

\item[$(f)$] if $x\in Z$, $0\leq \la\leq 1$, and $v\in \La_n T_x M$ ($n$-th wedge product of $T_xM$ \cite[\S 25]{Si83}), then
$$\|D\big( E\circ \mu(\la)\circ E^{-1}\big)_* v\|\leq \la^n\big(1+\ep(1-\la)\big)\|v\|.$$
Also $\la^n\big(1+\ep(1-\la)\big)\leq 1$ for all $0\leq \la\leq 1$, $\ep<n/2$.
\end{itemize}

Now we list a few facts about scaling of currents in Euclidean spaces as in \cite[\S 3.4(5)(6)(7)]{P81}. Given $r>0$, $0\leq \la\leq 1$, denote $B(0, r)$ by the closed ball of radius $r$ in $\R^{n+1}$, and $T\in \Z_{n-1}(\partial B(0, r))$, then we can define the cone of $T$ over the annulus $A(0, \la r, r)=B(0, r)\backslash B(0, \la r)$ as \cite[26.26]{Si83}
$$S=\de_0\ttimes(T-\mu(\la)_{\#}T)\in\Z_n(\R^{n+1}),$$
then
\begin{itemize}
\setlength{\itemindent}{0.5em}
\addtolength{\itemsep}{-0.7em}
\item[$(g)$] $\partial S=T-\mu(\la)_{\#}T$;

\item[$(h)$] $\M(S)=r n^{-1}(1-\la^n)\M(T)$;

\item[$(i)$] $spt(S)\subset A(0, \la r, r)$, where $spt(S)$ is the support of $S$ \cite[26.11]{Si83}.
\end{itemize}
Given $\la\geq 0$, and $T\in \bI_n(\R^{n+1})$, then it is easily seen that
$$\M(\mu(\la)_{\#}T)=\la^n\M(T).$$
Using notions as above, 
\begin{itemize}
\setlength{\itemindent}{0.5em}
\addtolength{\itemsep}{-0.7em}
\item[$(j)$] Given $r>0$, $0\leq \la\leq 1$, $B(p, r)\subset Z$, and $T\in \Z_n\big(B(p, r), \partial B(p, r)\big)$, then by (f),
\begin{equation}\label{mass change under scaling}
\M\big((E\circ\mu(\la)\circ E^{-1})_{\#}T\big)\leq \la^n(1+\ep(1-\la))\M(T)\leq \M(T);
\end{equation}

\item[$(k)$] Denote $S_{\la}=E_{\#}\big(\de_0\ttimes\big[E^{-1}_{\#}(\partial T)-(\mu(\la)\circ E^{-1})_{\#}(\partial T)\big]\big)$, then by (g)(h)(i),
$$\partial S_{\la}=\partial T-\partial\big[(E\circ \mu(\la)\circ E^{-1})_{\#}T\big],\ spt(S_{\la})\subset A(p, \la r, r)=\overline{B(p, r)\backslash B(p, \la r)},$$
\begin{equation}\label{mass of cone}
\M(S_{\la})\leq (Lip E)^n(Lip E)^{-n}r n^{-1}(1-\la^n)\M(\partial T)\leq 2 r n^{-1}(1-\la^n)\M(\partial T).
\end{equation}
\end{itemize}

Finally let us recall the contraction map in \cite[\S 3.4(8)]{P81}. For $r>0$, define
$$h(r): \R^{n+1}\rightarrow \R^{n+1},$$
by $h(r)(x)=x$ if $|x|\leq r$, and $h(r)(x)=r|x|^{-1}x$ if $|x|>r$. If $V\in\V_n(\R^{n+1})$, then
\begin{itemize}
\setlength{\itemindent}{0.5em}
\addtolength{\itemsep}{-0.7em}
\item[$(l)$] $spt(h(r)_{\#}V)\subset B(0, r)$;

\item[$(m)$] $(h(r)_{\#}V)\lc G_n\big(B_0(0, r)\big)=V\lc G_n\big(B_0(0, r)\big)$\footnote{$B_0(0, r)$ denotes the open ball of radius $r$ in $\R^{n+1}$.};

\item[$(n)$] $\M(h(r)_{\#}V)\leq \M(V)$.
\end{itemize}

\subsection{Isoperimetric choice.}\label{lemma on isoperimetric choice}
We refer the notions to \S \ref{Almgren isomorphism}.
\begin{lemma}\label{lemma on isoperimetric choice}
Given $T_1, T_2=\Z_n(M^{n+1})$, with $\F(T_1, T_2)\leq \nu_M$, assume that $T_1=\partial[[\Om_1]]$, $T_2=\partial[[\Om_2]]$, $\Om_1, \Om_2\in\C(M)$, and $\M\big([[\Om_2]]-[[\Om_1]]\big)<vol(M)/2$, then the isoperimetric choice of $T_2-T_1$ is $[[\Om_2-\Om_1]]$.
\end{lemma}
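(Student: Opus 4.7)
The plan is to verify the two defining properties of an isoperimetric choice for the candidate $Q := [[\Om_2]] - [[\Om_1]]$. The boundary condition $\partial Q = T_2 - T_1$ is immediate from $T_i = \partial[[\Om_i]]$ and linearity of $\partial$, so the entire content of the lemma is the mass identity $\M(Q) = \F(T_2 - T_1)$.

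First I would invoke the existence statement from \S \ref{Almgren isomorphism}: since $\F(T_2 - T_1) \leq \nu_M$, there exists $Q_0 \in \bI_{n+1}(M)$ with $\partial Q_0 = T_2 - T_1$ and $\M(Q_0) = \F(T_2 - T_1)$. Then $\partial(Q - Q_0) = 0$, so $Q - Q_0$ is an integral $(n+1)$-cycle on the connected oriented $(n+1)$-manifold $M$. By the Constancy Theorem \cite[26.27]{Si83}, there exists $k \in \mZ$ such that $Q - Q_0 = k[[M]]$.

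The main (and really only) step is to show $k = 0$. Here the two hypotheses come into play: by assumption $\M(Q) < vol(M)/2$, and by construction $\M(Q_0) = \F(T_2 - T_1) \leq \nu_M$. If $|k| \geq 1$, the triangle inequality gives
\begin{equation*}
|k|\,vol(M) = \M(k[[M]]) \leq \M(Q) + \M(Q_0) < \frac{vol(M)}{2} + \nu_M,
\end{equation*}
which yields a contradiction as soon as $\nu_M < vol(M)/2$. This last smallness may be assumed without loss of generality, since $\nu_M$ is a fixed constant depending only on $M$ coming from \cite[Corollary 1.14]{AF62} and replacing it by a smaller positive number only strengthens the admissibility hypothesis (and does not affect any application in the paper, where $\nu_M$ is used solely as an upper bound). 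Hence $k = 0$, so $Q = Q_0$ and in particular $\M(Q) = \M(Q_0) = \F(T_2 - T_1)$, proving that $[[\Om_2]] - [[\Om_1]]$ is an isoperimetric choice for $T_2 - T_1$.

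The argument is essentially routine: it consists of the existence of $Q_0$, one application of the Constancy Theorem, and the mass triangle inequality. There is no real obstacle beyond the minor observation that $\nu_M$ may be taken smaller than $vol(M)/2$, so I do not anticipate any technical difficulty.
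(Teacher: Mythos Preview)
Your proof is correct and follows the same skeleton as the paper's: take an isoperimetric choice $Q_0$, apply the Constancy Theorem to $Q-Q_0$, and use a mass bound to force the integer multiple of $[[M]]$ to vanish. The only difference is in bounding $\M(Q_0)$: you use $\M(Q_0)\leq\nu_M$ and then need the side assumption $\nu_M<vol(M)/2$, whereas the paper observes directly that $\M(Q_0)=\F(T_2-T_1)\leq\M\big([[\Om_2-\Om_1]]\big)$ (since $[[\Om_2-\Om_1]]$ is itself a filling of $T_2-T_1$), giving $\M(Q-Q_0)\leq 2\M\big([[\Om_2-\Om_1]]\big)<vol(M)$ without any constraint on $\nu_M$. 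This is a minor sharpening you might adopt, but your argument is sound as written.
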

\begin{proof}
Let $Q\in \bI_{n+1}(M)$ be the isoperimetric choice of $T_2-T_1$, then $\M(Q)=\F(T_1, T_2)\leq \M\big([[\Om_2-\Om_1]]\big)$, and $\partial Q=T_2-T_1$. As $T_2-T_1=\partial[[\Om_2-\Om_1]]$, $\partial\big(Q-[[\Om_2-\Om_1]]\big)=0$ in $\bI_{n+1}(M^{n+1})$. The Constancy Theorem \cite[26.27]{Si83} implies that $Q-[[\Om_2-\Om_1]]=n[[M]]$ for some $n\in\mathbb{Z}$. But $\M\big(Q-[[\Om_2-\Om_1]]\big)\leq \M(Q)+\M([[\Om_2-\Om_1]])\leq 2\M([[\Om_2-\Om_1]])<vol(M)$, hence $n=0$, and $Q=[[\Om_2-\Om_1]]$.
\end{proof}

We will also need a more subtle technical lemma concerning the isoperimetric choice.
\begin{lemma}\label{lemma on isoperimetric choice 2}
Given $T_1, T_2$ as above, with $T_1\neq 0$, there exists $\de>0$ (depending on $T_1$), such that if $\F(T_1, T_2)\leq \de$, then the isoperimetric choice of $T_2-T_1$ is $[[\Om_2-\Om_1]]$.
\end{lemma}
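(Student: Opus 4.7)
The plan is to choose $\de$ small enough that the Constancy Theorem argument used in Lemma~\ref{lemma on isoperimetric choice} applies and the integer ambiguity it produces is forced to vanish. Concretely, I would set $\de:=\frac{1}{2}\min\{\nu_M,\F(T_1)\}$. Since $T_1\neq 0$ and $\F$ is a metric on $\Z_n(M)$, we have $\F(T_1)>0$, hence $\de>0$. Moreover, $T_1=\partial[[\Om_1]]\neq 0$ forces $\Om_1\notin\{\emptyset,M\}$ modulo measure zero, and the fillings $[[\Om_1]]$ and $-[[M\setminus\Om_1]]$ of $T_1$ give the key bound $\F(T_1)\leq\min\{vol(\Om_1),vol(M\setminus\Om_1)\}$, which I will invoke at the end.

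Now suppose $\F(T_1,T_2)\leq\de$. Since $\de\leq\nu_M$, the isoperimetric choice $Q\in\bI_{n+1}(M)$ exists with $\partial Q=T_2-T_1$ and $\M(Q)=\F(T_1,T_2)\leq\de$. The triangle inequality gives $\F(T_2)\geq\F(T_1)-\F(T_1,T_2)\geq\F(T_1)/2>0$, so $T_2\neq 0$. Comparing any two representatives $[[\Om_2]],[[\Om_2']]$ of $T_2$ via the Constancy Theorem (exactly as in the proof of Lemma~\ref{lemma on isoperimetric choice}) then shows that $\Om_2$ is uniquely determined modulo measure zero by $T_2$.

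Because $\partial(Q-[[\Om_2-\Om_1]])=0$, the Constancy Theorem supplies $n\in\mZ$ with $Q=[[\Om_2-\Om_1]]+n[[M]]$, and the task is to rule out $n\neq 0$. I would do this by evaluating the pointwise multiplicity $\chi_{\Om_2}-\chi_{\Om_1}+n$ of the right-hand side on the four-piece partition $M=(\Om_1\cap\Om_2)\sqcup(\Om_2\setminus\Om_1)\sqcup(\Om_1\setminus\Om_2)\sqcup(\Om_1\cup\Om_2)^c$. A direct calculation yields $\M(Q)=vol(M\setminus\Om_1)+vol(\Om_2)\geq vol(M\setminus\Om_1)\geq\F(T_1)$ when $n=1$, $\M(Q)=vol(\Om_1)+vol(M\setminus\Om_2)\geq vol(\Om_1)\geq\F(T_1)$ when $n=-1$, and $\M(Q)\geq(|n|-1)\,vol(M)\geq vol(M)>\F(T_1)$ when $|n|\geq 2$. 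In each case $\M(Q)\geq\F(T_1)>\de$, contradicting $\M(Q)\leq\de$; hence $n=0$ and $Q=[[\Om_2-\Om_1]]$.

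The main obstacle is the ``global flip'' ambiguity $\Om_2\leftrightarrow M\setminus\Om_2$, which corresponds exactly to the $n=\pm 1$ case. The single requirement $\de<\F(T_1)/2$ handles it in both directions at once: it forces $T_2\neq 0$, so the representative $\Om_2$ is unambiguous, and, combined with $\F(T_1)\leq\min\{vol(\Om_1),vol(M\setminus\Om_1)\}$, it simultaneously makes the candidate fillings $[[\Om_2-\Om_1]]\pm[[M]]$ too massive to be the isoperimetric choice. The $|n|\geq 2$ cases are then automatic since $\de<vol(M)$.
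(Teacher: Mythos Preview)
Your proof is correct and follows essentially the same approach as the paper: both apply the Constancy Theorem to $Q-[[\Om_2-\Om_1]]$ and rule out the nonzero integer multiples of $[[M]]$ by a mass lower bound. The only cosmetic difference is your choice $\de=\tfrac{1}{2}\min\{\nu_M,\F(T_1)\}$ together with the observation $\F(T_1)\leq\min\{vol(\Om_1),vol(M\setminus\Om_1)\}$, whereas the paper takes $\de=\tfrac{1}{2}\min\{\mH^{n+1}(\Om_1),\mH^{n+1}(M\setminus\Om_1)\}$ directly; your explicit four-piece multiplicity computation and the uniqueness-of-$\Om_2$ remark are careful but not needed beyond what the paper does.
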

\begin{proof}
We use the same notions as in the proof of the above Lemma.

$T_1\neq 0$ implies that $\Om_1\neq \emptyset$ and $\Om_1\neq M$. Take
$$\de=\frac{1}{2}\min\{\mH^{n+1}(\Om_1), \mH^{n+1}(M\backslash\Om_1)\}.$$
Then $0<\de<vol(M)/2$. As we always assume that $\Om_1$, $\Om_2$ have the same orientation as $M$, hence $\M\big([[\Om_2]]-[[\Om_2]]\big)=\mH^{n+1}(\Om_1\lap\Om_2)$, where $\Om_1\lap\Om_2$ is the symmetric difference, i.e. $\Om_1\lap\Om_2=(\Om_1\backslash\Om_2)\cup(\Om_2\backslash\Om_1)$. Let $Q$ be the isoperimetric choice of $T_2-T_1$, by the above proof $Q-[[\Om_2-\Om_1]]=n[[M]]$. If $n=0$, the proof is done. If $n\neq 0$, then $|n|vol(M)=\M(Q-[[\Om_2-\Om_1]])\leq \M(Q)+\M([[\Om_2-\Om_1]])\leq \F(T_1, T_2)+\mH^{n+1}(\Om_1\lap\Om_2)\leq \de+vol(M)<2vol(M)$, hence $n=\pm1$. If $n=1$, then $Q=[[M]]+[[\Om_2-\Om_1]]=[[M-\Om_1]]+[[\Om_2]]$; hence $\M(Q)\geq \mH^{n+1}(M\backslash\Om_1)>\de$ (as $M-\Om_1$ has the same orientation as $\Om_2$), a contradiction. If $n=-1$, then $-Q=[[M]]-[[\Om_2-\Om_1]]=[[M-\Om_2]]+[[\Om_1]]$; hence $\M(Q)\geq \mH^{n+1}(\Om_1)>\de$ (as $M-\Om_2$ has the same orientation as $\Om_1$), a contradiction. 
\end{proof}




{\em Current address}:

Department of Mathematics, South Hall 6501, University of California Santa Barbara, Santa Barbara, CA 93106, USA; zhou@math.ucsb.edu\\

{\em Old address}:

Department of Mathematics, Massachusetts Institute of Technology, 77 Massachusetts Avenue, Cambridge, MA 02139, USA; xinzhou@math.mit.edu

\end{document}